\definecolor{darkcerulean}{rgb}{0.03, 0.27, 0.49}
\definecolor{firebrick}{rgb}{0.7, 0.13, 0.13}
\definecolor{forestgreen(traditional)}{rgb}{0.0, 0.27, 0.13}
\definecolor{hanpurple}{rgb}{0.32, 0.09, 0.98}
\definecolor{forestgreen(web)}{rgb}{0.13, 0.55, 0.13}
\newtheorem{theorem}{Theorem}[section]
\newtheorem{lemma}[theorem]{Lemma}
\newtheorem{corollary}[theorem]{Corollary}
\newcommand{\rsp}{\raisebox{0em}[2.7ex][1.3ex]{\rule{0em}{2ex} }}
\theoremstyle{remark}
\newtheorem{remark}[theorem]{\bf Remark}
\newtheorem{conjecture}{\bf Conjecture}
\renewcommand*{\backref}[1]{}\renewcommand*{\backrefalt}[4]{\ifcase #1 (\tt not cited)\or (\tt cited on page~#2)\else (\tt cited on pages~#2)\fi}
\def\NN{\mathbb{N}}
\def\RR{\mathds{R}}
\def\HH{I\!\! H}
\def\QQ{\mathbb{Q}}
\def\CC{\mathds{C}}
\def\ZZ{\mathbb{Z}}
\def\DD{\mathds{D}}
\def\OO{\mathcal{O}}
\def\kk{\mathds{k}}
\def\KK{\mathbb{K}}
\def\kk{\mathds{k}}
\begin{document}
	
	\def\NN{\mathbb{N}}
	\def\RR{\mathds{R}}
	\def\HH{I\!\! H}
	\def\QQ{\mathbb{Q}}
	\def\CC{\mathds{C}}
	
	\def\FF{\mathbb{F}}
	\def\KK{\mathbb{K}}
	
	\def\ZZ{\mathbb{Z}}
	\def\DD{\mathds{D}}
	\def\OO{\mathcal{O}}
	\def\kk{\mathds{k}}
	\def\KK{\mathbb{K}}
	
	\def\FF{\mathbb{F}}

	\def\2r{\mathrm{rank_2}}
	\def\rg{\mathrm{rank}}

	\def\vep{\varepsilon}
	\def\Gal{\mathrm{Gal}}	
	\def\G{\mathrm{G}}
	\def\rank{\mathrm{rank}}
	\def\k{\mathrm{k}}
	\def\K{\mathrm{K}}
	\def\L{\mathrm{L}}
	\def\F{\mathrm{F}}

	\selectlanguage{english}

	\title[On the Narrow 2-Class Field Tower  ]{On the Narrow 2-Class Field Tower of Some Real Quadratic Number Fields: 
		Lengths Heuristics Follow-Up    
	}

	\author[E. Benjamin]{Elliot Benjamin}
	\address{Elliot BENJAMIN: School of Social and Behavioral Sciences, Capella University,
		Minneapolis, MN 55402, USA; www.capella.edu}
	\email{ben496@prexar.com}

	\author[M. M. Chems-Eddin]{Mohamed Mahmoud Chems-Eddin}
	\address{Mohamed Mahmoud CHEMS-EDDIN: Department of Mathematics, Faculty of Sciences Dhar El Mahraz,
		Sidi Mohamed Ben Abdellah University, Fez,  Morocco}
	\email{2m.chemseddin@gmail.com}

	\subjclass[2020]{ 11R29,  20D15.}
	\keywords{Real quadratic field, Hilbert class field, narrow class group, narrow $2$-class field tower}


	\begin{abstract}
		In this article we continue the investigation of the length of the narrow $2$-class field tower of
		real quadratic number fields $\k$ whose discriminants are not a sum of two squares and for
		which their $2$-class groups are elementary of order $4$. Letting $\G$ equal the Galois group of the
		second Hilbert narrow $2$-class field over $\k$, and $[\G_i]$ denote the lower central series of $\G$, we
		give heuristic evidence that the length of the narrow $2$-class field tower of $\k$ is equal to $2$ when
		$\G/\G_3$ is of type $64.150$ (in the tables of Hall and Senior). We also give the formulation of the
		relevant unit groups of the narrow Hilbert $2$-class field for these fields.
		
		
	\end{abstract}
	
	\selectlanguage{english}
	
	\maketitle

	\section{\bf Introduction}
	
	In   previous works, for real quadratic number fields $\k$ with $2$-class group elementary of order $4$ the first author named and Snyder have studied when the length of the narrow $2$-class field tower of $\k$ is $2$ and when it is $\geq3$ (cf. \cite{BenjaminSnyderActaArithmetica2020,BenSnyder25PartII}). For fields whose discriminant is a sum of two squares
	the first author named and Snyder have completely determined criteria to distinguish between these two possibilities \cite{BenjaminSnyderActaArithmetica2020}. For fields whose discriminant is not a sum of two squares they have determined criteria to distinguish between
	these two possibilities for all fields $k$ with the exception of one particular type  \cite{BenSnyder25PartII}. We let $\k^1$ denote the Hilbert 2-class field of $\k$, and $\k^1_+$ denote the narrow Hilbert $2$-class field of $\k$.  Letting $\k^2_+$ denote the second narrow Hilbert $2$-class field of $\k$,
	$\G = \Gal(\k^2_+/\k)$, and the symbol $\simeq$ denote ''isomorphic to,``  and making use of the lower central series $[\G_i]$ of $\G$, this particular type of fields $\k$ are fields such that $\G/\G_3 \simeq 64.150$, as listed in Hall and Senior \cite{HallSenior}. More specifically, there are two cases of this type: $(c_3)$ and $(d_1)$ as listed in \cite{BenjaminSnyderPrerint2026ranks,BenSnyder25PartII}, and they can be described as $ \k = \QQ(\sqrt{d_1d_2d_3d_4})$ where $d_j$ are distinct negative prime discriminants,
	each divisible by the  	prime $p_j$ such that if $d_\k\equiv 4 \pmod 8$ then $d_4 = -4$, and 
	$$ \left(\dfrac{d_1}{p_2}\right)=\left(\dfrac{d_2}{p_3}\right)=\left(\dfrac{d_3}{p_1}\right)=-1 \text{ and } \left(\dfrac{d_1}{p_4}\right)=\left(\dfrac{d_2}{p_4}\right)=\left(\dfrac{d_3}{p_4}\right)=1;$$
	type $(d_1)$ are these fields $\k$ for which $d_4 = -4$, and type $(c_3)$ are these fields $\k$ for which $d_4\not= -4$.

	Throughout the paper, let  $h_2(m)$ (resp.   $\varepsilon_m$ ) denote the $2$-class number (resp. the fundamental unit) of a real quadratic field $\QQ(\sqrt{m})$. 
	Moreover, let $h(k)$ (resp.   $h_2(k)$, $\mathbf{C}l_2(k)$) denote  the class number (resp. the $2$-class number, the  $2$-class group)  of a number field $k$.
	
	\section{\bf Narrow $2$-class field tower: Results and Examples}

	Letting $\k$ be a field as described above for which $\G/\G_3 \simeq 64.150$, $\mathbf{C}l_2(\k^1_+)$ denote the narrow $2$-class group of $\k$, and $t$ denote the length of the narrow $2$-class field tower of $\k$, we state the following result from \cite{BenSnyder25PartII}.
	
	\medskip
	
	\begin{lemma}\label{fstlemma}
		If   $\mathbf{C}l_2(\k^1_+) $ has $8$-$\rank =0$, then $\k$ has narrow $2$-class field tower length $t=2$.
	\end{lemma}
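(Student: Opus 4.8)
The plan is to exploit the structure of the group $\G/\G_3 \simeq 64.150$ together with the hypothesis that $\mathbf{C}l_2(\k^1_+)$ has $8$-rank equal to $0$. Since $\mathbf{C}l_2(\k)$ is elementary abelian of order $4$, the group $\G = \Gal(\k^2_+/\k)$ is a $2$-group whose abelianization $\G/\G' \simeq \mathbf{C}l_2(\k^1_+)$ is the narrow $2$-class group of $\k$. The fact that $\G/\G_3$ is the specific group $64.150$ pins down $\G/\G_3$ completely, and in particular it fixes the $2$-rank and the orders of $\G/\G_2 = \G/\G'$ and of $\G_2/\G_3 = \G'/\G''$ as abelian quotients. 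The length $t$ of the narrow tower equals $2$ precisely when $\G$ is itself metabelian with $\G'' = 1$, equivalently when $\k^2_+ = \k^3_+$, so the goal reduces to showing $\G' = \Gal(\k^2_+/\k^1_+)$ is abelian.

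First I would record the invariants of the abelian quotient. The hypothesis that $\mathbf{C}l_2(\k^1_+)$ has $8$-rank $0$ says that the narrow $2$-class group of $\k$, viewed as $\G/\G'$, has exponent dividing $4$ with no cyclic factor of order $8$ or higher; combined with the known $2$-rank coming from genus theory for the four prime discriminants $d_1,d_2,d_3,d_4$, this forces $\G/\G'$ into a short list of abelian types. Next I would use the transfer (Verlagerung) and the known action of $\Gal(\k^1_+/\k) \simeq \mathbf{C}l_2(\k)$ on $\G'$ to bound $\G'$ itself: since $\G/\G_3$ is fixed to be $64.150$, the layer $\G_2/\G_3$ is determined, and the $8$-rank $0$ condition should close off the possibility that $\G_3 \neq 1$ by preventing further nonabelian descent. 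The key mechanism is that an $8$-rank contribution in $\mathbf{C}l_2(\k^1_+)$ is exactly what would permit $\G$ to have a larger derived subgroup, so its vanishing collapses the tower.

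The main obstacle will be converting the arithmetic statement ``$8$-rank $= 0$'' into the group-theoretic statement ``$\G$ is metabelian of the required form.'' Concretely, one must show that among the finite $2$-groups $\G$ whose quotient $\G/\G_3$ is $64.150$, the additional constraint that the abelianization $\G/\G'$ has $8$-rank $0$ singles out metabelian groups with $\G'' = 1$. I expect this to rest on a capitulation or principalization argument: the genus-theoretic data for the discriminant not a sum of two squares (so that $2$ ramifies in a controlled way and one of the $d_j$ may equal $-4$) determines how ideal classes of $\k$ capitulate in $\k^1_+$, and the $8$-rank hypothesis bounds the order of $\G/\G'$ tightly enough that the Schur multiplier / relation-rank bookkeeping forces $\G_3 = 1$. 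Once $\G_3 = 1$ is established, $\G$ is metabelian, hence $\k^2_+ = \k^3_+$ and $t = 2$ follows immediately. The delicate point is verifying that no $2$-group extension of $64.150$ with trivial $8$-rank on the abelianization escapes into length $\geq 3$; handling the two sub-cases $(c_3)$ and $(d_1)$ uniformly, especially the ramification bookkeeping when $d_4 = -4$, is where the real work lies.
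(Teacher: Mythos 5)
First, note that the paper does not actually prove this lemma: it is quoted verbatim as a result from \cite{BenSnyder25PartII}, so there is no in-paper argument to compare yours against. Judged on its own terms, however, your proposal contains a decisive conceptual error at the very first step. You identify the abelianization $\G/\G'$ with $\mathbf{C}l_2(\k^1_+)$, but class field theory gives $\G/\G'\simeq\Gal(\k^1_+/\k)\simeq \mathbf{C}l_2^+(\k)$ (the narrow $2$-class group of $\k$, of rank $3$ here), whereas it is the \emph{derived subgroup} $\G'=\Gal(\k^2_+/\k^1_+)$ that is isomorphic to $\mathbf{C}l_2(\k^1_+)$ (narrow equals wide since $\k^1_+$ is totally imaginary). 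So the $8$-rank hypothesis is a constraint on $\G'$, not on $\G/\G'$, and every subsequent step of your plan — reading the $8$-rank off the abelianization, combining it with genus theory for $\G/\G'$, and so on — is aimed at the wrong layer of the group.

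This misidentification then makes your proposed reduction vacuous. You reduce $t=2$ to ``$\G'=\Gal(\k^2_+/\k^1_+)$ is abelian,'' but $\G'$ is abelian by construction: it is a class group, and $\G=\Gal(\k^2_+/\k)$ is automatically metabelian with $\G''=1$. The actual content of the lemma is that the tower does not continue \emph{past} $\k^2_+$, i.e.\ that $\mathbf{C}l_2^+(\k^2_+)$ is trivial, equivalently that the full pro-$2$ group $\Gal(\k^\infty_+/\k)$ (of which $\G$ is only the maximal metabelian quotient) is itself metabelian. Nothing in your outline engages with groups beyond $\G$; likewise the suggestion that the bookkeeping ``forces $\G_3=1$'' is neither necessary nor sufficient for $t=2$ and sits uneasily with $\G/\G_3$ having order $64$. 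A workable route would instead combine the known structure $\G/\G_3\simeq 64.150$ with the exponent bound on $\G'\simeq\mathbf{C}l_2(\k^1_+)$ imposed by $8$-rank $0$ to pin down the candidates for $\Gal(\k^\infty_+/\k)$, or equivalently to verify the class-number criterion of Lemma~\ref{secndlemma} ($h_2(\L_j)/h_2(\k^1_+)=\tfrac12$ for $j=0,1,2$); as it stands, the proposal does not establish the statement.
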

	
	\medskip
	
	Furthermore,   the authors of \cite{BenSnyder25PartII} have determined criteria in general to distinguish between $t = 2$ and $t\geq 3$ for fields $\k$ as above.
	To describe this criteria we let  $\F_1=\QQ(\sqrt{d_1d_3d_4})$, $\F_2=\QQ(\sqrt{d_1d_2d_4})$, $\F_3=\QQ(\sqrt{d_2d_3d_4})$ and 
	we note that these discriminants have the following $C_4$-splittings: $d(\F_1)=d_1\cdot d_3d_4$, $d(\F_2)=d_2\cdot d_1d_4$ and $d(\F_3)=d_3\cdot d_2d_4$ (cf. \cite{BenSnyder25PartII,Lemmermeyer1995}).
	From the $C_4$-splittings we obtained unramified cyclic quartic extensions  $ \FF_j$ of  $\F_j$ (wlog j = 1, 2) (cf. \cite{BenSnyder25PartII}). If we let $\L_j=k^1_+\FF_j$, where $\k ^1_+$ is the narrow Hilbert $2$-class field of $\k$, then we have two unramified quadratic extensions of $\k^1_+$  such that the composite $\L_1\L_2$ is a $V_4$-extension of 
	$\k^1_+$ and hence contains a third unramified quadratic extension $\L_0 $ of $\k^1_+$. By the $C_4$-splittings of discriminants $d(\F_j)$, there are primitive rational solutions $(a_j,b_j,c_j)$ satisfying $a_1^2=d_1b_1^2+d_3d_4c_1^2$, $a_2^2=d_2b_2^2+d_1d_4c_1c_2^2$, and if we let $u_1=a_1+c_1\sqrt{d_3d_4}$ and
	$u_2=a_2+c_1\sqrt{d_1d_4}$ then we see for example that $u_1u_1'=a_1^2-c_1^2d_3d_4=b_1^2d_1<0$ and wlog we assume that $u_1>0$ and $u_1'<0$, and similarly for $u_2$.
	Then, $\FF_j=\F_j(\sqrt{u_j})$ are our initial two desired extensions, and $\L_0 = \k_+^1(\sqrt{u_1u_2})$  (cf. \cite{BenSnyder25PartII,Lemmermeyer1995}). We are now able to state our general criteria from \cite{BenSnyder25PartII} to distinguish between $t=2$ and $t\geq 3$ for fields as above.
	
	\begin{lemma}\label{secndlemma}
		$t=2$ if and only if $h_2(\L_j)/h_2(\k_ 1^+)=\frac12$, for $j=0$, $1$, $2$.
	\end{lemma}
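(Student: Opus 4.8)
The plan is to convert the tower-length condition into the triviality of a single Galois group and then detect that triviality through the three quadratic layers $\L_0,\L_1,\L_2$ by means of a class-number formula. Write $\k^3_+$ for the third narrow Hilbert $2$-class field of $\k$ and set $\widetilde{\G}=\Gal(\k^3_+/\k)$, a finite $2$-group. Since $\k^1_+$ and $\k^2_+$ are the fixed fields of the derived subgroups $\widetilde{\G}'$ and $\widetilde{\G}''$, class field theory gives $\mathbf{C}l_2(\k^1_+)\simeq \widetilde{\G}'/\widetilde{\G}''$ and $\mathbf{C}l_2(\k^2_+)\simeq \widetilde{\G}''$. As $\G/\G_3\simeq 64.150$ is nontrivial we already know $t\geq 2$, so the whole content of the lemma is the equivalence
$$t=2\iff \k^2_+=\k^3_+\iff \widetilde{\G}''=1.$$

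First I would record the class-number formula for the unramified quadratic extensions $\L_j/\k^1_+$. Each $\L_j$ lies in $\k^2_+$, so $V_j:=\Gal(\k^3_+/\L_j)$ is an index-$2$ subgroup of $\widetilde{\G}'$, and $\widetilde{\G}''=[\widetilde{\G}',\widetilde{\G}']\subseteq V_j$. Moreover every unramified abelian $2$-extension of $\L_j$ is unramified abelian over $\k^2_+$, hence lies in $\k^3_+$; this identifies $(\L_j)^1_+$ with the fixed field of $[V_j,V_j]$ and so $\mathbf{C}l_2(\L_j)\simeq V_j/[V_j,V_j]$. Combining this with $\mathbf{C}l_2(\k^1_+)\simeq \widetilde{\G}'/\widetilde{\G}''$ and $|V_j|=\tfrac12|\widetilde{\G}'|$ yields the elementary count
$$\frac{h_2(\L_j)}{h_2(\k^1_+)}=\frac{|V_j|/|[V_j,V_j]|}{|\widetilde{\G}'|/|\widetilde{\G}''|}=\frac{|\widetilde{\G}''|}{2\,|[V_j,V_j]|}.$$
Because $[V_j,V_j]\subseteq\widetilde{\G}''$ always holds, this ratio is never smaller than $\tfrac12$, and it equals $\tfrac12$ exactly when $[V_j,V_j]=\widetilde{\G}''$.

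With this dictionary the forward implication is immediate: if $t=2$ then $\widetilde{\G}''=1$, so $\widetilde{\G}'$ is abelian, each $V_j$ is abelian, and $[V_j,V_j]=1=\widetilde{\G}''$; thus all three ratios equal $\tfrac12$. The substance is the converse, which becomes the purely group-theoretic assertion: if the metabelian $2$-group $A:=\widetilde{\G}'$ has three index-$2$ subgroups $V_0,V_1,V_2$ lying over a common index-$4$ subgroup $W=V_0\cap V_1\cap V_2$ (so that $A/W\simeq V_4$ realises the $V_4$-extension $\L_1\L_2/\k^1_+$), and if $[V_j,V_j]=[A,A]$ for $j=0,1,2$, then $[A,A]=1$.

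The main obstacle is exactly this converse, which fails for an arbitrary $2$-group with three such subgroups and must be forced by the arithmetic of our situation. Here I would exploit the action of $\Delta=\Gal(\k^1_+/\k)\simeq V_4$ on $A\simeq\mathbf{C}l_2(\k^1_+)$ together with the prescribed type $\G/\G_3\simeq 64.150$, which pins down $\G=\widetilde{\G}/\widetilde{\G}''$ and therefore constrains the central extension $1\to\widetilde{\G}''\to\widetilde{\G}\to\G\to 1$ with $\widetilde{\G}''$ abelian; the specific origin of $\L_0,\L_1,\L_2$ in the $C_4$-factorisations $d(\F_j)$ fixes which subgroups $V_j$ occur. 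I expect the converse to be settled by running through the finite list of admissible covers $\widetilde{\G}$ of the $64.150$-group and checking that any nontrivial $\widetilde{\G}''$ produces at least one layer with $[V_j,V_j]\subsetneq\widetilde{\G}''$, i.e. one ratio strictly exceeding $\tfrac12$; this Hall--Senior--indexed case analysis is the real labour. As a consistency check, when $\mathbf{C}l_2(\k^1_+)$ has $8$-rank $0$ the module $A$ is too small to support a nontrivial $\widetilde{\G}''$ of the required type, recovering Lemma \ref{fstlemma}.
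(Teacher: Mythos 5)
The paper does not prove this lemma at all: it is quoted verbatim from \cite{BenSnyder25PartII} (``we state our general criteria from \cite{BenSnyder25PartII}''), so there is no internal argument to compare yours against. Judged on its own terms, your proposal gets the dictionary and the easy half right but does not prove the theorem. The translation is sound: $\L_j^1\subseteq\k^3_+$ does hold (because $\L_j^1\k^2_+/\k^2_+$ is abelian and unramified), so $\mathbf{C}l_2(\L_j)\simeq V_j/[V_j,V_j]$, the ratio $h_2(\L_j)/h_2(\k^1_+)=|\widetilde{\G}''|/(2|[V_j,V_j]|)$ is correct, it is automatically $\geq\frac12$ (consistent with the inequality invoked in the proof of Lemma \ref{abc}), and the implication $t=2\Rightarrow$ all three ratios equal $\frac12$ follows immediately.

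The gap is the converse, which is the entire content of the lemma. You correctly isolate it as the assertion that if the three \emph{specific} index-$2$ subgroups $V_0,V_1,V_2$ of $A=\widetilde{\G}'$ singled out by the $C_4$-splittings all satisfy $[V_j,V_j]=[A,A]$, then $[A,A]=1$; you note yourself that this fails for arbitrary $2$-groups and must be forced by the arithmetic; and you then replace the argument by the statement that you ``expect the converse to be settled by running through the finite list of admissible covers,'' calling that case analysis ``the real labour.'' That is a research plan, not a proof. Moreover the proposed enumeration is not obviously finite: only $\G/\G_3\simeq 64.150$ is fixed, while $A\simeq\mathbf{C}l_2(\k^1_+)$ is unbounded over the family (Table \ref{tableexamples} already shows orders up to $2^9$), so there is no finite list of groups $\widetilde{\G}$ to exhaust. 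What is needed, and what \cite{BenSnyder25PartII} supplies, is a structural argument for metabelian-by-abelian $2$-groups with the prescribed $\G/\G_3$ identifying exactly which maximal subgroups of $\G'$ control $\G''$; without that input the equivalence remains unproven.
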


	The authors of  \cite{BenSnyder25PartII} gave two examples that satisfy Lemma \ref{secndlemma} and therefore for which $t=2$. Using 
	PARI/GP (\href{https://pari.math.u-bordeaux.fr/}{https://pari.math.u-bordeaux.fr/})(assuming GRH), we extend this to the following
	$12$ examples in Table \ref{tableexamples}, for which our first two examples are the ones included in  \cite{BenSnyder25PartII}. In our table we list the $2$-class group 
	of $\k^1_+$ and the $2$-class groups of $\L_j$, $j=1$, $2$, $0$ denoting our $2$-class groups that are direct sums of cyclic groups of orders $2^i$, $2^j$, $2^s$ as 
	$(2^i,2^j,2^s)$, for $i\geq 1$, $j\geq 1$, $s\geq 1$, and our fields by the discriminant   of the field.

	\begin{table}[H]
		{
			\tiny	
			$$ \begin{tabular}{  |c c c c c c c c|}
				\hline	\rsp  Type  & Field & $u_1$ &$u_2$ &  $\mathbf{C}l_2(\k^1_+)$& $\mathbf{C}l_2(\L_1)$ & $\mathbf{C}l_2(\L_2)$ &$\mathbf{C}l_2(\L_0)$ \\ \hline
				
				\rsp $(c_3)$       &$3\cdot 8\cdot 11\cdot 23       $& $-1+2\sqrt{6}$  & $5+2\sqrt{33}$ &  $(2,4,4)$& $(4,4)$ & $(4,4)$ & $(2,2,4)$   \\ 
				\hline
				\rsp $(d_1)$       &$7\cdot 23\cdot 31\cdot 4            $& $19+4\sqrt{23}$  & $9+4\sqrt{7}$ &  $(2,4,8)$& $(2,4,4)$ & $(4,8)$ & $(4,8)$   \\ 
				\hline
				\rsp	$ (c_3)$  &  $7\cdot 3\cdot 19\cdot 131$&  $-25 + 6\sqrt{21} $   &   $23 + 2\sqrt{133}$ &    $(2, 4, 8) $   &  $ (4, 8) $   &   $ (2, 4, 4)$  &     $(4, 8)$\\
				\hline

				\rsp	$(d_1)   $     &    $ 7\cdot 31\cdot 79\cdot 4 $  &  $391 + 44\sqrt{79} $ & $111+ 20\sqrt{31}$  &  $(2, 4, 4)$     &   $ (4, 4)$    &      $(4, 4) $    &  $(2, 2, 4)$\\
				\hline
				\rsp $	(c_3)  $        &    $3\cdot 8\cdot 11\cdot 47  $   &  $ -7 + 4\sqrt{6} $      &    $5 + \sqrt{33}  $ &    $(2, 4, 16)$  &   $(4, 16) $  &    $ (4, 16)$    &  $(2, 4, 8)$\\
				\hline 
				\rsp	$(d_1)  $     &    $7\cdot 31\cdot 127\cdot 4$   &   $45 + 4\sqrt{127 }$ & $ 423 + 76\sqrt{31} $&  $ (2, 4, 8)$ &   $ (2, 4, 4)$  &    $ (4, 8)$     &   $ (4, 8)$\\
				\hline
				\rsp	$(c_3)$               & $3\cdot 11\cdot 23\cdot 347 $  &$ -29 + 6\sqrt{33}$    & $ 83 + 10\sqrt{69}$& $(2, 4, 16) $&   $ (2, 4, 8) $ &   $ (4, 16)$  &  $(4, 16)$\\
				\hline
				\rsp$	(d_1) $         &  $23\cdot 7\cdot 167\cdot 4 $  &    $29 +12\sqrt{7} $    & $19 + 4\sqrt{23}$  &  $(4, 8, 8) $ &     $(2, 8, 8)$   & $(2, 8, 8)$  &   $ (4, 4, 8)$\\
				\hline
				\rsp	$(c_3) $           & $3\cdot 8\cdot 11\cdot 71  $      &  $5 + \sqrt{33}  $      &  $ 5 + 4\sqrt{6}$&  $ (2, 4, 16)$  & $(2, 4, 16) $ &   $(4, 16)$      & $(4, 16)$ \\
				\hline                                                            
				\rsp$	(d_1)  $        &  $ 47\cdot 23\cdot 103\cdot 4 $ &    $27 + 4\sqrt{47}$ &  $ 37 + 8\sqrt{23}$ &   $(2, 8, 32) $ &    $ (2, 4, 32)$   & $ (8, 32) $  &   $ (8, 32)$\\
				\hline
				
				\rsp$	(c_3)    $    &    $ 7\cdot 3\cdot 19\cdot 251 $&     $43 + 10\sqrt{21}$  &  $23 + 2\sqrt{133} $ &  $(4, 16, 32)$ &   $(4, 16, 16)$ & $(2, 16, 32)$  & $(2, 16, 32)$ \\
				\hline                          
				\rsp$	(d_1) $  &$7\cdot 103\cdot 127\cdot 4$  & $3 + 4\sqrt{7}$  &$39 + 4\sqrt{103}$  &   $ (2, 8, 8) $   &    $ (2, 4, 8)$   &    $ (8, 8) $   &    $(8, 8)$\\
				\hline

			\end{tabular}$$}
		\caption{Fields $\k$ Satisfying the Criteria of Lemma \ref{secndlemma}} \label{tableexamples}
		
	\end{table}

	\begin{remark}
		It follows from Lemma \ref{secndlemma} that for all our fields in Table \ref{tableexamples}, we have $t = 2$. And we note from Table \ref{tableexamples} that for our fields k = $\mathbb Q(\sqrt{3\cdot 8\cdot 11\cdot 23})$ and $\k = \mathbb Q(\sqrt{7\cdot 31\cdot 79\cdot 4})$ we have $\mathbf{C}l_2(\k^1_+)= (2, 4, 4)$. Therefore  $\mathbf{C}l_2(\k^1_+)$ has $8$-rank $0$ for these two fields and it also follows from Lemma \ref{fstlemma} that $t = 2$.  
	\end{remark}

	\medskip

	Based upon our examples in Table \ref{tableexamples} and the fact that all the examples we have found satisfy the criteria of Lemma \ref{secndlemma}, we make the following conjecture.
	
	\medskip   
	\begin{conjecture}\label{conj1}
		Let $\k $ be a field as above, satisfying $ \G/\G_3 \simeq 64.150$   where $\G = \Gal(\k^2_+/\k)$. Then $\k$ satisfies the criteria of Lemma \ref{secndlemma} and therefore has narrow $2$-class field tower length $2$.
	\end{conjecture}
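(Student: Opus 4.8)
Since the final statement is a conjecture, I describe the line of attack I would pursue toward a proof, anchored on Lemma~\ref{secndlemma} and on the group-theoretic picture of the tower. By Lemma~\ref{secndlemma} it suffices to establish, \emph{uniformly} over the whole family of type $64.150$, that $h_2(\L_j)/h_2(\k^1_+)=\tfrac12$ for $j=0,1,2$. Each $\L_j/\k^1_+$ is an unramified quadratic extension (in the narrow sense, i.e. unramified at the infinite places as well), so by Artin reciprocity the norm $N_{\L_j/\k^1_+}$ maps $\mathbf{C}l_2(\L_j)$ onto an index‑$2$ subgroup $A_j\le \mathbf{C}l_2(\k^1_+)$, whence $h_2(\L_j)/h_2(\k^1_+)=\tfrac12\,|\ker N_{\L_j/\k^1_+}|$. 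Thus the criterion of Lemma~\ref{secndlemma} is equivalent to the single clean statement that $N_{\L_j/\k^1_+}$ is \emph{injective} on $2$-class groups for $j=0,1,2$, i.e. that $\mathbf{C}l_2(\L_j)\simeq A_j$.

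To control $|\ker N_{\L_j/\k^1_+}|$ I would use the ambiguous/genus apparatus. Writing $\sigma_j$ for the generator of $\Gal(\L_j/\k^1_+)$ and $E=E_{\k^1_+}$, Chevalley's ambiguous class number formula in the everywhere-unramified case gives $|\mathbf{C}l_2(\L_j)^{\sigma_j}| = \dfrac{h_2(\k^1_+)}{2\,I_j}$, where $I_j=[E:E\cap N_{\L_j/\k^1_+}\L_j^\times]$ is a unit norm index, while the $(1-\sigma_j)$-part of $\mathbf{C}l_2(\L_j)$ together with the Tate group $\hat H^{-1}(\langle\sigma_j\rangle,\mathbf{C}l_2(\L_j))$ accounts for $\ker N_{\L_j/\k^1_+}$. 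Injectivity of the norm is then equivalent to the conjunction $I_j=1$ and triviality of the $\sigma_j$-action (i.e. $\mathbf{C}l_2(\L_j)$ entirely ambiguous). The key structural input that pins down the Galois action is that $\L_j\subseteq\k^2_+$, the narrow $2$-class field of $\k^1_+$: this ties the whole computation to the fixed metabelian datum $\G=\Gal(\k^2_+/\k)$, so the $\sigma_j$-action and the capitulation (Artin transfer) pattern should be read off the group $64.150$ and its admissible descendants rather than field by field. It then remains to force $I_j=1$, which I would reduce to an explicit unit computation: exhibit generators of $E$ coming from the quadratic and biquadratic subfields of $\k^1_+$ — in particular the fundamental units $\varepsilon_{d_id_j}$ and the elements $u_1,u_2$ introduced before Lemma~\ref{secndlemma} — and verify each lies in $N_{\L_j/\k^1_+}\L_j^\times$ using the defining splitting data $\left(\tfrac{d_1}{p_2}\right)=\left(\tfrac{d_2}{p_3}\right)=\left(\tfrac{d_3}{p_1}\right)=-1$ and $\left(\tfrac{d_i}{p_4}\right)=1$.

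The hard part is precisely making this last step uniform. The field $\k^1_+$ has degree $8$ over $\QQ$ and its full unit group is not explicitly known; one only controls units inherited from proper subfields, so the genuine difficulty is to \emph{exclude a hidden fundamental unit} of $\k^1_+$ whose norm class is nontrivial. A single such unit would make some $I_j=2$, force $h_2(\L_j)/h_2(\k^1_+)=1$, and by Lemma~\ref{secndlemma} yield $t\ge 3$. Equivalently, on the group side, one must rule out every \emph{non-metabelian} finite $2$-group $P$ with $P/P_3\simeq 64.150$ that is admissible as a two-generated Schur $\sigma$-group (the relation rank being prescribed by the narrow setting); the descendant tree of $64.150$ under the $p$-group generation algorithm is infinite, and the $\sigma$-group together with Schur-multiplier constraints would have to be shown to leave only metabelian survivors, for which $\k^\infty_+=\k^2_+$ and hence $t=2$. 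It is exactly the absence of a uniform argument closing this last step — while every field computed so far in Table~\ref{tableexamples} obeys the required index-triviality and the $8$-rank hypothesis of Lemma~\ref{fstlemma} when it applies — that keeps the assertion at the level of a conjecture supported by heuristic evidence.
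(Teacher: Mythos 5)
The statement you are addressing is a conjecture: the paper offers no proof of it, only the heuristic evidence of the twelve fields in Table~\ref{tableexamples} computed with PARI/GP under GRH, and you are right not to claim one. Your outline is sound as far as it goes (the reduction of the criterion of Lemma~\ref{secndlemma} to injectivity of $N_{\L_j/\k^1_+}$ on $2$-class groups is correct, since $\k^1_+$ is totally imaginary and $\L_j/\k^1_+$ is unramified everywhere, and your Chevalley-plus-$\hat H^{-1}$ dissection of the kernel is accurate), and it correctly isolates the true obstruction. But it is worth noting that the paper pursues a parallel strategy through different machinery: instead of Chevalley's formula for $\L_j/\k^1_+$, it applies Kuroda's class number formula to the $V_4$-extension $\L_j/\k^1$ (using $h_2(\k^1)=1$), which converts the criterion into the statement that the unit index $q(\L_j/\k^1)=(E_{\L_1}:E_{\k^1_+}E_{\K_1}E_{\K_1'})$ equals $1$, or $2$ with $v=1$ (Lemma~\ref{abc}); the input $h_2(\K_j)=1$ needed there (Snyder's Lemma~\ref{snyderlemma1}) is itself proved by exactly the ambiguous-class-number-plus-unit-signs argument you sketch, but one level down, for the degree-$16$ fields $\K_j/\k^1$, where the paper's explicit determination of $E_{\k^1}$ and $E_{\k^1_+}$ in Section~\ref{sec4} suffices to control the norm index. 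Both routes therefore funnel into the same bottleneck, which the paper states explicitly: the unit groups $E_{\K_j}$ and $E_{\L_j}$ of the degree-$16$ and degree-$32$ fields are not known, so neither your index $I_j$ nor the paper's index $q(\L_j/\k^1)$ can currently be evaluated uniformly over the family. Your closing remarks about excluding non-metabelian Schur $\sigma$-descendants of $64.150$ give a legitimate complementary (group-theoretic) formulation of the same difficulty, though the paper does not take that route at all.

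One small caution: your phrase ``the $\sigma_j$-action \ldots should be read off the group $64.150$ and its admissible descendants rather than field by field'' understates the problem, since $64.150$ is only $\G/\G_3$; the action of $\sigma_j$ on $\mathbf{C}l_2(\L_j)$ depends on the full group $\G$ (indeed on $\Gal(\k^\infty_+/\k)$ if $t\ge 3$), which is precisely what is not determined. So that step cannot be made uniform by quotient data alone, and this is why the assertion remains, in the paper as in your proposal, a conjecture.
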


	In order to prove our conjecture, it may be worthwhile for the interested reader to consider the unit index $q$ in the Kuroda class number formula (see below). As an illustration, using our above notation and $E_k$ to denote the unit group of a number field $k$, we let $\k^{1}=\QQ(\sqrt{d_1d_2}, \sqrt{d_1d_3}, \sqrt{d_1d_4})$,
	$\L_1 = \k^1(\sqrt{d_1},\sqrt{u_1})$, $\k^1_+= \k^1(\sqrt{d_1})$, $\K_1 = \k^1(\sqrt{u_1})$, and 
	$\K_1' = \k^1(\sqrt{u_1'}) = \k^1(\sqrt{d_1u_1})$.
	Applying the Kuroda class number formula for $V_4$-extensions
	to $\L_1/\k^1$ and recalling that for our fields as above we have $h_2(\k^1) = 1$ (cf.  \cite{BenLemSnyderJNT1998}), we obtain that
	$$\frac{h_2(\L_1)}{h_2(\k^1_+)} =  \frac{1}{2^{1+v}}q(\L_1/\k^1)(h_2(\K_1))^2,$$ 
	where the unit index $q = q(\L_1/\k^1) = (E_{\L_1} : E_{\k^1_+}E_{\K_1}E_{\K_1'})$ and $v = 0$ or $1$ with $v = 0$ unless 
	$\L_1 = \k^1(\sqrt{n_1},\sqrt{n_2})$ for some $n_1$, $n_2$ in $E_{\k^1}$ (cf. \cite{BenjaminSnyderActaArithmetica2020,LemmermeyerActa Arith.1994}).
	Using similar formulations for $\L_2$ and $\L_0$, 
	letting $\K_2 = \k^1(\sqrt{u_2})$ and $\K_0 = \k^1(\sqrt{u_1u_2})$,
	Chip Snyder has proved that $h_2(\K_j) = 1$ for $j =1$, $2$, $0$ in the above Kuroda class number formula for $\L_j/\k^1$, which we state as the following lemma, for which we give a sketch of the proof in the Appendix.
	
	\begin{lemma}[Snyder]\label{snyderlemma1}
		$h_2(\K_j) = 1$ in the Kuroda class number formula for $\L_j/\k^1$ for $j = 1$, $2$, $0$.
	\end{lemma}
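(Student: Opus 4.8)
The plan is to establish each equality $h_2(\K_j)=1$ by a genus-theoretic (ambiguous class number) argument applied to the quadratic extension $\K_j/\k^1$, exploiting the hypothesis $h_2(\k^1)=1$. First I would invoke the elementary fact that a nontrivial finite abelian $2$-group acted on by a group of order $2$ has a nontrivial subgroup of fixed points. Writing $G_j=\Gal(\K_j/\k^1)$, it therefore suffices to show that the group $\mathbf{C}l_2(\K_j)^{G_j}$ of ambiguous $2$-classes is trivial: for then $\mathbf{C}l_2(\K_j)$ itself must be trivial, i.e.\ $h_2(\K_j)=1$.

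To count the ambiguous classes I would apply Chevalley's ambiguous class number formula to $\K_j/\k^1$. Since $[\K_j:\k^1]=2$, passing to $2$-parts gives
$$\#\,\mathbf{C}l_2(\K_j)^{G_j}=\frac{h_2(\k^1)\,2^{\,t_j}}{2\,\big(E_{\k^1}:E_{\k^1}\cap N_{\K_j/\k^1}\K_j^{\times}\big)},$$
where $t_j$ is the number of places of $\k^1$ (finite or infinite) ramifying in $\K_j$, and the denominator index is automatically a power of $2$, say $2^{\nu_j}$. Using $h_2(\k^1)=1$ this reads $\#\,\mathbf{C}l_2(\K_j)^{G_j}=2^{\,t_j-1-\nu_j}$, so the entire problem reduces to the single numerical identity $\nu_j=t_j-1$.

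Next I would pin down the ramification, i.e.\ compute $t_j$. The infinite part is immediate from the sign conditions recorded above: $\k^1$ is totally real of degree $8$, and in exactly the four real embeddings sending $\sqrt{d_3d_4}\mapsto-\sqrt{d_3d_4}$ one has $u_1\mapsto u_1'<0$, so those four real places ramify in $\K_1=\k^1(\sqrt{u_1})$ (and analogously for $\K_2$, $\K_0$). The finite part I would control through the norm relation $N(u_1)=d_1b_1^2$ coming from the $C_4$-splitting together with the fact that $\FF_1/\F_1$ is \emph{unramified}: this confines the primes of odd $u_1$-valuation to those above $d_1$ (and, in type $(d_1)$, above $2$), so that only the primes of $\k^1$ over $d_1$ and over $2$ can ramify. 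Splitting these primes explicitly in the octic field $\k^1$ then yields the exact value of $t_j$ in each of the types $(c_3)$ and $(d_1)$.

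The heart of the proof, and the step I expect to be the main obstacle, is the computation of the unit-norm index $2^{\nu_j}=\big(E_{\k^1}:E_{\k^1}\cap N_{\K_j/\k^1}\K_j^{\times}\big)$. Here I would first use the known description of $E_{\k^1}$ for the fields under study (a finite-index subgroup generated by the fundamental units of the real quadratic subfields $\QQ(\sqrt{d_id_\ell})$ and their products, cf.\ the cited unit-group results), and then decide, place by place, which of these generators are everywhere local norms from $\K_j$. These local conditions are governed by Hilbert symbols that unwind into the Legendre-symbol hypotheses of the setup, namely $\left(\frac{d_1}{p_2}\right)=\left(\frac{d_2}{p_3}\right)=\left(\frac{d_3}{p_1}\right)=-1$ together with $\left(\frac{d_i}{p_4}\right)=1$; the delicate points are the behaviour at the dyadic place (which differs between types $(c_3)$ and $(d_1)$) and the interaction with the sign conditions at the ramified infinite places. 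Showing that exactly $t_j-1$ independent norm conditions hold gives $\nu_j=t_j-1$, hence $\#\mathbf{C}l_2(\K_j)^{G_j}=1$ and $h_2(\K_j)=1$. Finally, since $\K_1,\K_2,\K_0$ arise symmetrically under permuting the roles of the $d_i$ (with $\K_0$ built from $u_1u_2$ and the corresponding combined norm relation), the same computation disposes of all three cases $j=1,2,0$.
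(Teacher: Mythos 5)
Your overall strategy coincides with the paper's: apply the ambiguous class number formula to the quadratic extension $\K_j/\k^1$, use $h_2(\k^1)=1$ to reduce the claim to the triviality of the ambiguous $2$-classes, and then balance the count of ramified places against the unit-norm index $(E_{\k^1}:E_{\k^1}\cap N_{\K_j/\k^1}\K_j^{\times})$. But there are two concrete gaps in the execution.

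First, the ramification count. The paper uses the fact that $\K_j/\k^1$ is unramified at \emph{every} finite prime --- this is built into the construction, since $\FF_1=\F_1(\sqrt{u_1})$ is an unramified extension of $\F_1$ and $\K_1$ is its lift to $\k^1$ --- so $s=4$ comes solely from the four real embeddings at which $u_1$ is negative, and, crucially, a unit of $\k^1$ is then automatically a local norm at every finite place. Your proposal instead treats the primes above $d_1$ and above $2$ as potentially ramified and proposes Hilbert-symbol computations at the dyadic place. This both risks miscounting $t_j$ and misplaces where the difficulty lies: once finite unramifiedness is noted, the index $(E_{\k^1}:H_j)$ is governed purely by the signs of the conjugates of the units of $\k^1$ at the four ramified real places, and the Legendre-symbol hypotheses enter only through those signs, not through local norm conditions at finite primes.

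Second, and more seriously, the decisive computation is missing. The whole content of the lemma is that the signature index equals exactly $2^{3}$, and this cannot be read off from the hypotheses alone: it requires the explicit fundamental system of units of $\k^1$ obtained in Section \ref{sec4}, in particular the extra unit $u_0$ (a fourth root of a product of quadratic-subfield units) lying outside the subgroup $E$ generated by the $\vep_{d_id_\ell}$ and $\vep_\k$, whose existence reflects $q(\k^1)=2^{7}$. The paper's argument exhibits $\langle -u_2,u_3,u_5,u_6,\vep_\k\rangle E^2\subseteq H_1$ and then rules out a second independent totally positive element of the form $\eta u_0$, yielding $(E_{\k^1}:H_1)=2^{8}/2^{5}=2^{3}$, hence $\rank(\mathbf{C}l_2(\K_1))=0$. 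Your plan stops at ``show that exactly $t_j-1$ independent norm conditions hold,'' which is precisely the step that needs this unit-group input; as written, the proposal does not close.
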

	
	\medskip
	
	Putting Lemmas \ref{fstlemma} and \ref{snyderlemma1} together with the Kuroda class number formula, we can state the following alternative criteria for our above fields $\k$ to have narrow $2$-class field tower length $3$, where we let $q = q(\L_j/\k^1)$ for $j = 1$, $2$, $0$.
	
	\medskip
	
	\begin{lemma} \label{abc}
		$t = 2$ if and only if either $q = 1$, or $q = 2$ and $v = 1$, for $\L_1$, $\L_2$, $\L_0$.
	\end{lemma}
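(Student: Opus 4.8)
The plan is to read Lemma~\ref{abc} as a purely arithmetic consequence of the Kuroda class number formula recorded above, once the auxiliary class numbers have been pinned down. First I would feed Snyder's Lemma~\ref{snyderlemma1}, namely $h_2(\K_j)=1$ for $j=1,2,0$, into the three instances of the Kuroda formula for $\L_j/\k^1$. Since $h_2(\k^1)=1$ for the fields under consideration, the factor $(h_2(\K_j))^2$ disappears and each instance collapses to
\[
\frac{h_2(\L_j)}{h_2(\k^1_+)}=\frac{q}{2^{1+v}},
\]
where $q=q(\L_j/\k^1)=(E_{\L_j}:E_{\k^1_+}E_{\K_j}E_{\K_j'})\ge 1$ and $v\in\{0,1\}$ are the unit index and the correction term attached to $\L_j$.

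Next I would invoke Lemma~\ref{secndlemma}, which characterises $t=2$ by the requirement that $h_2(\L_j)/h_2(\k^1_+)=\tfrac12$ hold simultaneously for $j=0,1,2$. Combining this with the simplified formula, $t=2$ is equivalent to $q/2^{1+v}=\tfrac12$, i.e. to $q=2^{v}$, for each of $\L_1$, $\L_2$, $\L_0$. Because $q$ is a positive integer and $v$ takes only the values $0$ and $1$, the equation $q=2^{v}$ admits exactly the two solutions $(q,v)=(1,0)$ and $(q,v)=(2,1)$. Translating back, $t=2$ amounts to demanding, for each of the three fields, that either $q=1$ (with $v=0$) or $q=2$ and $v=1$, which is the asserted criterion.

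The point that requires genuine care — and which I expect to be the main obstacle — is that the biconditional is faithful to the solution set $\{(1,0),(2,1)\}$ only if the spurious pair $(q,v)=(1,1)$ is excluded: that combination satisfies the verbatim clause \emph{$q=1$} yet yields the ratio $\tfrac14$ and hence $t\neq 2$, which would falsify the backward implication. Thus the substantive task is to establish, in this family, that $v=1$ forces $q\ge 2$ (equivalently $q=1\Rightarrow v=0$). Here I would unwind the definition of $v$: the case $v=1$ occurs precisely when $\L_j=\k^1(\sqrt{n_1},\sqrt{n_2})$ is generated over $\k^1$ by square roots of units $n_1,n_2\in E_{\k^1}$, and I would then analyse $E_{\L_j}$ through Kubota's description of the units of a multiquadratic field, using $h_2(\k^1)=1$ and the prescribed splitting of $p_1,\dots,p_4$, to show that this unit-generated structure prevents $(E_{\L_j}:E_{\k^1_+}E_{\K_j}E_{\K_j'})$ from being trivial. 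Once this implication is secured, the clause \emph{$q=1$} automatically carries $v=0$, the two solution pairs match the two clauses of the statement exactly, and both directions of the equivalence follow.
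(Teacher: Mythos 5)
Your reduction is the same as the paper's: substitute Snyder's Lemma~\ref{snyderlemma1} and $h_2(\k^1)=1$ into the Kuroda formula to get $h_2(\L_j)/h_2(\k^1_+)=q/2^{1+v}$, invoke Lemma~\ref{secndlemma} to turn $t=2$ into the condition $q=2^{v}$ for each of $\L_1,\L_2,\L_0$, and observe that the admissible pairs are $(q,v)=(1,0)$ and $(2,1)$. You also correctly identify the one delicate point, namely that the clause ``$q=1$'' in the statement is faithful only if the pair $(q,v)=(1,1)$ cannot occur.

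The gap is that you do not actually close that point: you only announce that you \emph{would} establish ``$q=1\Rightarrow v=0$'' by unwinding the definition of $v$ and analysing $E_{\L_j}$ via Kubota-type unit computations, and you flag this as the main obstacle. The paper dispatches it in one line, and with far less machinery: since $\L_j$ is an unramified quadratic extension of $\k^1_+$, class field theory gives the a priori bound $h_2(\L_j)/h_2(\k^1_+)\geq \tfrac12$ (the norm map on $2$-class groups has cokernel of order $2$, so the $2$-class number can drop by at most a factor of $2$). Hence $q/2^{1+v}\geq \tfrac12$, i.e. $q\geq 2^{v}$, and $q=1$ forces $v=0$; the pair $(1,1)$ is excluded outright, with no unit-group analysis needed. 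Your proposed route through the structure of $E_{\L_j}$ is exactly the computation the authors say is currently out of reach for these degree-$32$ fields, so as written the argument is incomplete precisely at the step you single out as essential.
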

	\begin{proof}
		The proof follows immediately from Lemmas   \ref{fstlemma} and \ref{snyderlemma1} together with the Kuroda class number formula.
		Since $h_2(\L_j)/h_2(\k^1_+) \geq \frac12 $, we note from Lemma \ref{snyderlemma1} that $q = 1$ implies that $v = 0$.
	\end{proof}
	
	\medskip
	
	Finally, we make the following conjecture, similarly to our first conjecture.
	
	\medskip
	
	\begin{conjecture}\label{conj2}
		Let $\k$ be a field as above, satisfying $\G/\G_3 \simeq 64.150$ where $\G = \Gal(\k^2_+/\k)$. Then $\k$ satisfies the criteria of Lemma \ref{abc} and therefore has narrow $2$-class field tower length $2$.
	\end{conjecture}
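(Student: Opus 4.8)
The plan is to reduce Conjecture \ref{conj2} to an exact evaluation of the unit index $q_j=q(\L_j/\k^1)$ and then to attack that index through the fundamental units of the fields involved. Feeding Lemma \ref{snyderlemma1} (so that $h_2(\K_j)=1$) and the known fact $h_2(\k^1)=1$ into the Kuroda class number formula for the $V_4$-extension $\L_j/\k^1$ yields, for each $j\in\{0,1,2\}$, the clean identity
$$\frac{h_2(\L_j)}{h_2(\k^1_+)}=\frac{q_j}{2^{1+v_j}},\qquad q_j=(E_{\L_j}:E_{\k^1_+}E_{\K_j}E_{\K_j'}).$$
By Lemma \ref{secndlemma}, $t=2$ holds precisely when this ratio equals $\tfrac12$ for all three $j$, that is, precisely when $q_j=2^{v_j}$ for $j=0,1,2$. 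Since the ratio is always at least $\tfrac12$ (as used in the proof of Lemma \ref{abc}), the lower bound $q_j\ge 2^{v_j}$ is automatic, and the whole conjecture collapses to the single upper bound $q_j\le 2^{v_j}$ for each $j$, the borderline case to exclude being $q_j=2$ with $v_j=0$ (which would force $t\ge 3$).

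To bound $q_j$ from above I would work with the three quadratic subfields of $\L_j/\k^1$, which are precisely $\k^1_+=\k^1(\sqrt{d_1})$, $\K_j=\k^1(\sqrt{u_j})$ and $\K_j'=\k^1(\sqrt{d_1u_j})$, so that $q_j$ is the index of the image of the multiplication map $E_{\k^1_+}\times E_{\K_j}\times E_{\K_j'}\to E_{\L_j}$. The standard route is to show that every unit of $\L_j$ becomes a product of subfield units after at most one squaring, by testing it against the three relative norms $N_{\L_j/\k^1_+}$, $N_{\L_j/\K_j}$, $N_{\L_j/\K_j'}$ and against the $\Gal(\L_j/\k^1)$-action on $E_{\L_j}/E_{\L_j}^2$; equivalently one may reformulate $q_j$ cohomologically and compute the relevant Tate groups, the triviality of $h_2(\k^1)$ and $h_2(\K_j)$ established above serving to kill the class-number contributions to the Herbrand quotients. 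The explicit generators $u_1=a_1+c_1\sqrt{d_3d_4}$ and $u_2=a_2+c_1\sqrt{d_1d_4}$, together with the normalisation $u_j>0>u_j'$ and the quadratic-residue relations defining types $(c_3)$ and $(d_1)$, are what should pin down the needed norm symbols and signatures. Since $v_j$ records exactly whether $\L_j=\k^1(\sqrt{n_1},\sqrt{n_2})$ for units $n_1,n_2\in E_{\k^1}$, the target is to prove that any extra unit witnessed by $q_j>1$ must be a square root of such a unit, i.e.\ that $q_j>1$ can occur only together with $v_j=1$.

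The decisive obstacle is the explicit control of $E_{\L_j}$. The fields $\L_j$ have degree $32$ over $\QQ$ (they are $V_4$-extensions of the degree-$8$ field $\k^1$), so writing down a fundamental system of units in closed form is not feasible, and the crux of any proof is to exclude the configuration $q_j=2,\ v_j=0$ without such a system. I expect the arithmetic data alone will not suffice and that the group-theoretic hypothesis $\G/\G_3\simeq 64.150$ must be invoked: one would have to translate the structure of the second narrow $2$-class group, and the capitulation of the ideal classes of $\k^1_+$ inside the $\L_j$, into the assertion that the norm cokernel contains no unit beyond the square roots of units of $\k^1$. Upgrading the heuristic evidence of Table \ref{tableexamples} to this uniform structural statement is precisely the step I anticipate to be the hard one.
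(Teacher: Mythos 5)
There is a fundamental mismatch here: the statement you are asked to prove is stated in the paper as a \emph{conjecture}, and the paper itself offers no proof of it. The authors only provide heuristic evidence (the twelve numerically verified fields of Table \ref{tableexamples}, checked with PARI/GP under GRH) together with partial groundwork, namely Lemma \ref{snyderlemma1} ($h_2(\K_j)=1$), the reformulation in Lemma \ref{abc}, and the explicit unit groups of $\k^1$ and $\k^1_+$ computed in Section \ref{sec4}, which they describe only as ``a first step to prove Conjecture \ref{conj2}.'' Your first two paragraphs essentially re-derive Lemma \ref{abc}: combining the Kuroda formula with $h_2(\K_j)=1$ and $h_2(\k^1)=1$ to get $h_2(\L_j)/h_2(\k^1_+)=q_j/2^{1+v_j}$, and observing that $t=2$ is equivalent to $q_j=2^{v_j}$ for $j=0,1,2$, with the only case to exclude being $q_j=2$, $v_j=0$. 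That reduction is correct but is already in the paper; it is not new progress.

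The genuine gap is that the decisive step --- proving the upper bound $q_j\le 2^{v_j}$ uniformly for all fields of types $(c_3)$ and $(d_1)$ with $\G/\G_3\simeq 64.150$ --- is not carried out, and you say so yourself in your final paragraph. Your proposed route (testing units of $\L_j$ against the three relative norms and the Galois action on $E_{\L_j}/E_{\L_j}^2$, or a cohomological computation of the relevant Tate groups) is a plausible strategy, but it founders exactly where you predict: one has no explicit fundamental system of units for the degree-$32$ fields $\L_j$, nor even for the degree-$16$ fields $\K_j$, a difficulty the paper acknowledges explicitly (``at the present time it is not known how to calculate the unit groups $E_{\K_j}$ and $E_{\L_j}$''). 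Without that input, or some structural argument replacing it, the configuration $q_j=2$, $v_j=0$ cannot be excluded, and the conjecture remains open. In short, your submission is an accurate analysis of what a proof would require, not a proof; there is nothing in the paper to compare it against because the paper does not prove the statement either.
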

	
	\medskip

	As a first step to prove Conjecture \ref{conj2}, we now give a formulation of $E_{\k^1_+}$ (and $E_{\k^1}$) for the fields $\k$ as above of types $(c_3)$ and $(d_1)$, 
	and where we divide our fields of type $(c_3)$ initially into two types regarding whether all primes dividing $d_\k$ are congruent
	to $3 \pmod 4$ or whether $2$ divides $d_\k$; we subsequently consider two subtypes
	when $2$ divides $d_\k$: when $2 = p_i$ for $i = 1$, $2$, or $3$, and then when $2 = p_4$.  However, we note that although we can easily calculate $E_{\K_j'}$ from $E_{\K_j}$ since $\K_j'$ and $\K_j$ are conjugate fields for $j = 1$, $2$, $0$, 
	at the present time it is not known how to calculate the unit groups $E_{\K_j}$ and $E_{\L_j}$.
	Before describing how we obtained all of our unit groups for $\k^1$ and $\k^1_+$, we first give the necessary prerequisites.

	\section{\bf  Prerequisites to Obtain the Unit Groups of    $\k^1$ and $\k_+^{1}$} 
	
	We note that the techniques that we utilize in this section are similar to previous techniques utilized by the authors in a number of papers (e.g., \cite{BenjaminSnyderActaArithmetica2020,ChemsEddin2022,ElHamam}).
	Let us start by stating the following useful lemmas.
	
	\begin{lemma}[\cite{azizunints99}, Proposition 3]\label{Lemmeazizi2}  Let  $K_0 $ be a real   number field and $\beta$  a positive square-free algebraic integer of   $K_0$.
		Assume that  $K=K_0(\sqrt{-\beta})$  is a quadratic extension of     $K_0$ that is abelian over $\QQ$. Assume furthermore that $i=\sqrt{-1}\not\in K$.
		Let $\{\varepsilon_1, \cdots, \varepsilon_r\}$ be a fundamental system of unit of      $K_0$. Without loss of generality we may suppose that the units   $\varepsilon_i$ are positives.
		Let $\varepsilon$ be a unit of $K_0$ such that
		$\beta\varepsilon$ is a square in $K_0\ ($if it exists$)$. Then a fundamental system of units of $K$ is one of the following systems :
		\begin{enumerate}[\rm 1.]
			\item $\{\varepsilon_1,\cdots,\varepsilon_{r-1},\sqrt{-\varepsilon } \}$ if $\varepsilon$ exists, in this case $\varepsilon=\varepsilon_1^{j_1}\cdots \varepsilon_{r-1}^{j_{r-1}}\varepsilon_r$,
			where $j_i\in \{0,1\}$.
			\item $\{\varepsilon_1,\cdots,\varepsilon_r \}$ if $\varepsilon$ does not exist.
			
		\end{enumerate}
	\end{lemma}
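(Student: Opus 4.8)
The plan is to treat $K=K_0(\sqrt{-\beta})$ as a CM field whose maximal totally real subfield is $K_0$, and to control $E_K$ through the Hasse unit index $Q=[E_K:W_KE_{K_0}]$, where $E_F$ denotes the unit group of a field $F$ and $W_K$ the group of roots of unity of $K$. First I would record the archimedean picture: since $\beta>0$ and $K/\QQ$ is Galois, every infinite place of $K$ is complex, so $E_K$ and $E_{K_0}$ have the same rank $r$, and complex conjugation $\sigma$ is exactly the nontrivial element of $\Gal(K/K_0)$. This reduces the problem to comparing the free part of $E_K$ with that of $E_{K_0}$.

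The engine of the argument is the homomorphism $\phi\colon E_K\to W_K$, $\eta\mapsto \eta/\sigma(\eta)=\eta/\bar\eta$: for a unit $\eta$ the quotient $\eta/\bar\eta$ has absolute value $1$ at every (complex) archimedean place, so by Kronecker's theorem it is a root of unity and $\phi$ is well defined. Its kernel is $\{\eta\in E_K:\eta=\bar\eta\}=E_{K_0}$, whence $E_K/E_{K_0}\cong\phi(E_K)\subseteq W_K$; moreover $\phi(W_K)=W_K^2$ has index $2$ in $W_K$. Consequently $\phi(E_K)$ equals either $W_K^2$ or $W_K$, which yields $Q=1$ or $Q=2$ respectively. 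Here the hypothesis $i\notin K$ enters decisively: it forces $-1\notin W_K^2$, so that a unit $\eta$ with $\eta/\bar\eta=-1$ cannot lie in $W_KE_{K_0}$ and therefore genuinely enlarges the free part of $E_K$.

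When $Q=1$ we have $\phi(E_K)=W_K^2$, every class of $E_K/E_{K_0}$ is represented by a root of unity, and a fundamental system of $K$ is a fundamental system of $K_0$: this is case~2. When $Q=2$ we have $-1\in W_K=\phi(E_K)$, so I may choose $\eta\in E_K$ with $\eta/\bar\eta=-1$, i.e. $\eta=-\bar\eta$ is purely imaginary. Writing $\eta=c\sqrt{-\beta}$ with $c\in K_0$ gives $\eta^2=-\beta c^2\in E_{K_0}$; setting $\varepsilon:=\beta c^2$ produces a unit of $K_0$ with $\beta\varepsilon=(\beta c)^2$ a square in $K_0$ and $\eta=\sqrt{-\varepsilon}$. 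Conversely, if such an $\varepsilon$ exists with $\beta\varepsilon=\gamma^2$, then $\sqrt{-\varepsilon}=\gamma\sqrt{-\beta}/\beta$ is an algebraic integer of $K$ whose square $-\varepsilon$ is a unit, so $\sqrt{-\varepsilon}\in E_K$ and $\phi(\sqrt{-\varepsilon})=-1$, forcing $Q=2$. This establishes the dichotomy: case~1 occurs precisely when such an $\varepsilon$ exists.

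Finally, to reach the stated normal form I would reduce $\varepsilon$ modulo $(E_{K_0})^2$: replacing $\varepsilon$ by $\varepsilon\mu^2$ only multiplies $\sqrt{-\varepsilon}$ by $\mu\in E_{K_0}$, so $\varepsilon$ may be taken of the form $\varepsilon_1^{j_1}\cdots\varepsilon_r^{j_r}$ with $j_i\in\{0,1\}$, any sign being absorbed into $-1$. Not all $j_i$ can vanish, for otherwise $-\varepsilon$ would be a unit square and $\sqrt{-\varepsilon}$ would put $i$ into $K$; hence after reindexing the $\varepsilon_i$ one may assume $j_r=1$, giving $\varepsilon=\varepsilon_1^{j_1}\cdots\varepsilon_{r-1}^{j_{r-1}}\varepsilon_r$. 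Since $(\sqrt{-\varepsilon})^2=-\varepsilon$ recovers $\varepsilon_r$ from $\sqrt{-\varepsilon}$ and $\varepsilon_1,\dots,\varepsilon_{r-1}$, the family $\{\varepsilon_1,\dots,\varepsilon_{r-1},\sqrt{-\varepsilon}\}$ generates $E_K$ modulo $W_K$ and has rank $r$, so it is a fundamental system, which is case~1. I expect the main obstacle to be the second paragraph, namely deriving $-1\notin W_K^2$ from $i\notin K$ and thereby the sharp bound $Q\le 2$, together with the square-class bookkeeping of the last step needed to certify that the exhibited family is a genuine fundamental system and not merely a finite-index subfamily.
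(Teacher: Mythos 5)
The paper does not prove this statement: it is quoted verbatim from Azizi \cite{azizunints99}, Proposition~3, so there is no internal proof to compare against. Your argument is correct and is essentially the classical one used in that source: reduce to the Hasse unit index $Q=[E_K:W_KE_{K_0}]\in\{1,2\}$ via the map $\eta\mapsto\eta/\bar\eta$ into $W_K$, use $i\notin K$ to get $-1\notin W_K^2$, and identify $Q=2$ with the existence of $\varepsilon$ such that $\beta\varepsilon$ is a square (the only cosmetic slip is in the final square-class bookkeeping, where the case ``all $j_i=0$'' splits into $\varepsilon=\mu^2$, which forces $i\in K$, and $\varepsilon=-\mu^2$, which forces $\sqrt{-\varepsilon}\in E_{K_0}$; both are contradictions, and in fact the sign is automatically $+$ since $\beta\varepsilon$ is a square and $\beta>0$).
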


	\begin{lemma}[\cite{azizunints99}, Proposition 2]\label{Lemme azizi} Let $K_0$ be a real number field and $K=K_0(i)$ a quadratic extension of $K_0$ that is abelian over $\QQ$. Let $n\geq 2$ be an integer and $\zeta_n$ a $2^n$-th primitive root of unity, then
		$	\zeta_n=\frac{1}{2}(\mu_n+\lambda_ni)$, where $\mu_n=\sqrt{2+\mu_{n-1}}$, $\lambda_n=\sqrt{2-\mu_{n-1}}$, $\mu_2=0$, $\lambda_2=2$ and $\mu_3=\lambda_3=\sqrt{2}$. Let $n_0$ be the greatest
		integer such that $\zeta_{n_0}$ is contained in $K$, $\{\varepsilon_1,...,\varepsilon_r\}$ a fundamental system of units of $K_0$ and $\varepsilon$ a unit of $K_0$ such that
		$(2+\mu_{n_0})\varepsilon$ is a square in $K_0$(if it exists). Then a fundamental system of units of $K$ is one of the following systems :
		\begin{enumerate}[\rm 1.]
			\item $\{\varepsilon_1,...,\varepsilon_{r-1},\sqrt{\zeta_{n_0}\varepsilon } \}$ if $\varepsilon$ exists, in this case $\varepsilon=\varepsilon_1^{j_1}...\varepsilon_{r-1}^{j_{r-1}}\varepsilon_r$,
			where $j_i\in \{0,1\}$.
			\item $\{\varepsilon_1,...,\varepsilon_r \}$ if $\varepsilon$ does not exist.
			
		\end{enumerate}
	\end{lemma}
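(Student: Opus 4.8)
The plan is to treat $K=K_0(i)$ as a CM field over the totally real field $K_0$ (the hypotheses force this, since an abelian field containing $i$ is CM with maximal real subfield $K_0$) and to read off $E_K$ from $E_{K_0}$ via the Hasse unit index. Because $K/K_0$ is a totally imaginary quadratic extension of a totally real field, Dirichlet's theorem gives $\mathrm{rank}\,E_K=\mathrm{rank}\,E_{K_0}=r$, so $E_{K_0}$ already supplies the full free rank of $E_K$, and only the roots of unity $W_K=\langle\zeta_{n_0}\rangle$ and a possible index-$2$ ``glue'' can be missing. Concretely, I would show that the index $Q=[E_K:W_K E_{K_0}]$ is either $1$ or $2$, that case~$2$ of the lemma is exactly $Q=1$ and case~$1$ is exactly $Q=2$, and then identify the extra generator.

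For the index computation I would use the conjugation map $\phi\colon E_K\to W_K$, $\phi(u)=u/\overline u$, where $u\mapsto\overline u$ denotes complex conjugation (the nontrivial element of $\mathrm{Gal}(K/K_0)$). Each value $u/\overline u$ is an algebraic integer of absolute value $1$ at every archimedean place, hence a root of unity, so $\phi$ is well defined; its kernel is $E_K\cap K_0=E_{K_0}$, and its image contains $\phi(\zeta_{n_0})=\zeta_{n_0}^{2}$, which generates the unique index-$2$ subgroup $W_K^{2}$ of the cyclic $2$-group $W_K$. Hence $\mathrm{im}\,\phi\in\{W_K^2,W_K\}$, which forces $Q\in\{1,2\}$ and shows $Q=2$ if and only if $\zeta_{n_0}\in\mathrm{im}\,\phi$, i.e. there is $u\in E_K$ with $u/\overline u=\zeta_{n_0}$. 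Writing $u^{2}=(u\overline u)(u/\overline u)=\varepsilon\,\zeta_{n_0}$ with $\varepsilon:=N_{K/K_0}(u)\in E_{K_0}$, this is equivalent to: $\zeta_{n_0}\varepsilon$ is a square in $K$ for some unit $\varepsilon$ of $K_0$ (the converse uses $(u/\overline u)^2=\zeta_{n_0}^2$, so $u/\overline u=\pm\zeta_{n_0}$, both primitive).

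To match the stated criterion I would pass from ``$\zeta_{n_0}\varepsilon$ is a square in $K$'' to ``$(2+\mu_{n_0})\varepsilon$ is a square in $K_0$''. Here $\sqrt{\zeta_{n_0}}=\zeta_{n_0+1}=\tfrac12\bigl(\sqrt{2+\mu_{n_0}}+\sqrt{2-\mu_{n_0}}\,i\bigr)$, and since $\zeta_{n_0}\in K$ one checks $\mu_{n_0}=\zeta_{n_0}+\overline{\zeta_{n_0}}\in K_0$ and $\lambda_{n_0}=-i(\zeta_{n_0}-\overline{\zeta_{n_0}})\in K_0$. Thus $\sqrt{\zeta_{n_0}\varepsilon}=a+bi$ lies in $K$ precisely when $4a^{2}=\varepsilon(2+\mu_{n_0})$ and $4b^{2}=\varepsilon(2-\mu_{n_0})$ are both squares in $K_0$; but the recursion gives $(2+\mu_{n_0})(2-\mu_{n_0})=4-\mu_{n_0}^2=2-\mu_{n_0-1}=\lambda_{n_0}^2$, so $\varepsilon(2-\mu_{n_0})=\bigl(\lambda_{n_0}\varepsilon/s\bigr)^2$ whenever $\varepsilon(2+\mu_{n_0})=s^2$. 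Hence the single condition that $(2+\mu_{n_0})\varepsilon$ be a square in $K_0$ is equivalent to $\sqrt{\zeta_{n_0}\varepsilon}\in K$, and therefore to $Q=2$. When no such $\varepsilon$ exists we are in case~$2$, where $E_K=\langle\zeta_{n_0}\rangle E_{K_0}$ and $\{\varepsilon_1,\dots,\varepsilon_r\}$ is a fundamental system; when it exists we are in case~$1$ with the extra generator $\sqrt{\zeta_{n_0}\varepsilon}$.

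The step I expect to be most delicate is the final normalization $\varepsilon=\varepsilon_1^{j_1}\cdots\varepsilon_{r-1}^{j_{r-1}}\varepsilon_r$ and the verification that replacing $\varepsilon_r$ by $\sqrt{\zeta_{n_0}\varepsilon}$ yields a genuine fundamental system. First one shows $\varepsilon$ is well defined modulo $\pm E_{K_0}^2$ (if $\zeta_{n_0}\varepsilon$ and $\zeta_{n_0}\varepsilon'$ are both squares in $K$ then $\sqrt{\varepsilon/\varepsilon'}\in K\cap\RR=K_0$), so one may take $\varepsilon\equiv\varepsilon_1^{j_1}\cdots\varepsilon_r^{j_r}$ with $j_i\in\{0,1\}$. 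One must then argue that at least one exponent is odd (otherwise $\varepsilon\equiv\pm1$ and $\sqrt{\pm\zeta_{n_0}}=\zeta_{n_0+1}\in K$, contradicting the maximality of $n_0$) and relabel so that $\varepsilon_r$ occurs. Finally, from $(\sqrt{\zeta_{n_0}\varepsilon})^2=\zeta_{n_0}\varepsilon_1^{j_1}\cdots\varepsilon_r$ one recovers $\varepsilon_r$ modulo $W_K$ and squares of the remaining $\varepsilon_i$, so $\{\varepsilon_1,\dots,\varepsilon_{r-1},\sqrt{\zeta_{n_0}\varepsilon}\}$ generates $E_K$ modulo $W_K$ and has rank $r$; comparing indices (this system strictly contains $\langle\varepsilon_1,\dots,\varepsilon_r\rangle$ with index $2$ modulo $W_K$) confirms it is fundamental, matching $Q=2$.
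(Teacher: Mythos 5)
The paper does not actually prove this statement: it is quoted verbatim from Azizi \cite{azizunints99} (Proposition 2) and used as a black box, so there is no in-paper proof to compare yours against. Your argument is a correct reconstruction of the standard proof of that proposition — the Hasse-type index $[E_K : W_K E_{K_0}]\in\{1,2\}$ via $u\mapsto u/\overline{u}$, the translation of index $2$ into ``$\zeta_{n_0}\varepsilon$ is a square in $K$,'' and the descent of that condition to ``$(2+\mu_{n_0})\varepsilon$ is a square in $K_0$'' through the identity $(2+\mu_{n_0})(2-\mu_{n_0})=\lambda_{n_0}^2$ — which is exactly the Kubota--Azizi method underlying the cited source. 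The only blemishes are cosmetic: $W_K$ need not equal $\langle\zeta_{n_0}\rangle$ when $K$ contains odd-order roots of unity (your index argument survives because the odd part of the group of roots of unity consists of squares, so $\zeta_{n_0}$ still generates $W_K/W_K^2$); the normalization $a^2+b^2=\varepsilon$ silently requires replacing $\varepsilon$ by $-\varepsilon$ if necessary; and $\sqrt{\pm\zeta_{n_0}}$ is a primitive $2^{n_0+1}$-th root of unity rather than literally $\zeta_{n_0+1}$. None of these affects correctness.
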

	
	\medskip
	
	Let us recall the method given in    \cite{wada} that describes a fundamental system  of units of a real  multiquadratic field $K_0$. Let  $\sigma_1$ and 
	$\sigma_2$ be two distinct elements of order $2$ of the Galois group of $K_0/\mathbb{Q}$. Let $K_1$, $K_2$ and $K_3$ be the three subextensions of $K_0$ invariant by  $\sigma_1$,
	$\sigma_2$ and $\sigma_3= \sigma_1\sigma_3$, respectively. Let $\varepsilon$ denote a unit of $K_0$. Then \label{Wada'salgo}
	$$\varepsilon^2=\varepsilon\varepsilon^{\sigma_1}  \varepsilon\varepsilon^{\sigma_2}(\varepsilon^{\sigma_1}\varepsilon^{\sigma_2})^{-1},$$
	and we have, $\varepsilon\varepsilon^{\sigma_1}\in E_{K_1}$, $\varepsilon\varepsilon^{\sigma_2}\in E_{K_2}$  and $\varepsilon^{\sigma_1}\varepsilon^{\sigma_2}\in E_{K_3}$.
	It follows that the unit group of $K_0$  
	is generated by the elements of  $E_{K_1}$, $E_{K_2}$ and $E_{K_3}$, and the square roots of elements of   $E_{K_1}E_{K_2}E_{K_3}$ which are perfect squares in $K_0$.

	\medskip	
	
	The   following class number formula for   multiquadratic number fields  is usually attributed to Kuroda \cite{Ku-50} or Wada \cite{wada}, but it goes back to Herglotz \cite{He-22}.
	
	\medskip	
	
	\begin{lemma}[\cite{Ku-50}]\label{wada's f.}
		Let $k$ be a multiquadratic number field of degree $2^n$, $n\geq 2$,  and $k_i$ be the $s=2^n-1$ quadratic subfields of $k$. Then
		$$h(k)=\frac{1}{2^v}q(k)\prod_{i=1}^{s}h(k_i),$$
		where  $ q(k)=[E_k: \prod_{i=1}^{s}E_{k_i}]$ and   $$     v=\left\{ \begin{array}{cl}
			n(2^{n-1}-1); &\text{ if } k \text{ is real, }\\
			(n-1)(2^{n-2}-1)+2^{n-1}-1 & \text{ if } k \text{ is imaginary.}
		\end{array}\right.$$
	\end{lemma}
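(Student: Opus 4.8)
The plan is to prove the formula by the classical analytic route, decomposing the Dedekind zeta function of $k$ along the characters of $G=\Gal(k/\QQ)\cong(\ZZ/2\ZZ)^n$ and comparing residues at $s=1$; this is the computation of Kuroda \cite{Ku-50}, reproved analytically by Lemmermeyer \cite{LemmermeyerActa Arith.1994}. Since $G$ is elementary abelian of order $2^n$, its characters are the trivial one together with $2^n-1$ quadratic characters $\chi_i$, each of which has kernel of index $2$ and hence cuts out exactly one of the quadratic subfields $k_i$, with $L(s,\chi_i)=\zeta_{k_i}(s)/\zeta(s)$. Artin factorization then gives
\[
\zeta_k(s)=\zeta(s)\prod_{i=1}^{2^n-1}L(s,\chi_i)=\zeta(s)^{\,2-2^n}\prod_{i=1}^{2^n-1}\zeta_{k_i}(s).
\]
Writing $\rho_F=\lim_{s\to1}(s-1)\zeta_F(s)$ for the residue at $s=1$ and using $\rho_\QQ=1$, comparison of leading terms yields the clean multiplicative relation $\rho_k=\prod_{i=1}^{2^n-1}\rho_{k_i}$.

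Next I would substitute the analytic class number formula
\[
\rho_F=\frac{2^{r_1(F)}(2\pi)^{r_2(F)}h(F)R_F}{w_F\sqrt{|d_F|}}
\]
into this relation, where $r_1,r_2$ is the signature, $R_F$ the regulator, and $w_F$ the number of roots of unity. The conductor--discriminant formula $|d_k|=\prod_{i=1}^{2^n-1}|d_{k_i}|$ (the conductor of $\chi_i$ being $|d_{k_i}|$) makes the discriminant factors cancel exactly, leaving an identity relating $h(k)$, $\prod_i h(k_i)$, the regulators $R_k$ and $R_{k_i}$, the archimedean constants $2^{r_1}(2\pi)^{r_2}$, and the $w_F$.

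The heart of the argument, and the step I expect to be the main obstacle, is the comparison of regulators. Because the $2^n$ infinite places of $k$ restrict to the infinite places of each $k_i$ with equal multiplicity $2^{n-1}$, embedding a fundamental unit of $k_i$ into $k$ rescales its logarithmic vector by an explicit power of $2$ (with the usual doubling at complex places). Since $\prod_i E_{k_i}$ is a full-rank subgroup of $E_k$, its regulator equals $R_k$ times the lattice index, and the latter equals $q(k)$ up to the root-of-unity contribution; evaluating the determinant of the resulting block matrix then expresses the regulator of $\prod_i E_{k_i}$ as $c\cdot\prod_i R_{k_i}$ for an explicit $2$-power $c$, whence $R_k=(c/q(k))\prod_i R_{k_i}$. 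The delicate points are tracking that $\prod_i E_{k_i}$ and $E_k$ have the same rank, separating the free-part index from the torsion index, and pinning down the exact exponent of $2$ in $c$ from the multiplicities $2^{n-1}$ and the combinatorics of the character table.

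Finally I would assemble the pieces and split into the two cases. When $k$ is totally real one has $r_2(k)=0$ and every $k_i$ real with $w=2$, so the $(2\pi)$-factors are absent and the $2^{r_1}$ and $w$ contributions cancel between the two sides; the entire $2$-power $v$ then comes from the regulator constant $c$, giving $v=n(2^{n-1}-1)$ (which for $n=2$ recovers the familiar $2^v=4$ of the real biquadratic case). When $k$ is totally imaginary one has $r_1(k)=0$, $r_2(k)=2^{n-1}$, and the subfields split into $2^{n-1}-1$ real and $2^{n-1}$ imaginary ones; here $r_2(k)=\sum_i r_2(k_i)$ forces the $(2\pi)$-powers to cancel and the odd parts of the $w_F$ cancel as well, while the surviving $2$-parts of the $w_F$, the complex-place normalizations in the regulator comparison, and the index constant $c$ combine to give $v=(n-1)(2^{n-2}-1)+2^{n-1}-1$ (again matching $2^v=2$ for $n=2$). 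The routine part is the arithmetic of these exponents; the conceptual content is entirely in the regulator comparison of the previous step.
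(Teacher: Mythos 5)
The paper does not prove this lemma at all: it is quoted verbatim as a classical result of Herglotz--Kuroda--Wada, with the proof delegated to the cited references (in particular to Lemmermeyer's analytic treatment). Your plan is exactly the proof that lives in those references, and its skeleton is sound: the character decomposition $\zeta_k=\zeta^{\,2-2^n}\prod_i\zeta_{k_i}$ is correct, the residue identity $\rho_k=\prod_i\rho_{k_i}$ follows since the pole orders balance, and the conductor--discriminant formula does cancel the discriminants. You also correctly locate the entire difficulty in the regulator comparison. The one substantive criticism is that you stop precisely there: the value of $v$ \emph{is} the content of the lemma, and it is produced by the determinant of the matrix $\bigl(\chi_i(\sigma)\log|\varepsilon_i|\bigr)$, whose rows are the logarithmic embeddings of the fundamental units of the $k_i$ into $k$; this reduces to the determinant of (a minor of) the character table of $(\ZZ/2\ZZ)^n$, a Hadamard-type matrix, and that is where the exponents $n(2^{n-1}-1)$ and $(n-1)(2^{n-2}-1)+2^{n-1}-1$ actually come from, together with the bookkeeping of the factor $2$ at complex places and of $w_k$ in the imaginary case. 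Calling this ``routine arithmetic'' undersells it --- it is the step a referee would ask to see --- but your sanity checks at $n=2$ ($2^v=4$ real, $2^v=2$ imaginary) are right, and nothing in your outline would fail if the computation were carried out. In short: correct strategy, identical to the cited source, incomplete at the decisive computation.
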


	\begin{lemma}[\cite{Az-00}, Lemme 5]\label{lem2}
		Let $d>1$ be a square-free integer and $\varepsilon_d=x+y\sqrt d$, where $x$, $y$ are  integers or semi-integers and $\varepsilon_d$ is the fundamental unit of $\QQ(\sqrt{d})$. If $N(\varepsilon_d)=1$, then $2(x+1)$, $2(x-1)$, $2d(x+1)$, and $2d(x-1)$ are not squares in $\QQ$.
	\end{lemma}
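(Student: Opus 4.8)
The plan is to exploit the factorization coming from $N(\varepsilon_d)=1$, namely $x^2-1=dy^2$, together with an elementary identity expressing $\sqrt{\varepsilon_d}$ in terms of $\sqrt{(x\pm1)/2}$. Writing the fundamental unit as $\varepsilon_d=x+y\sqrt d$, I would first note that $\varepsilon_d>1$ forces $x>1$ (and $x^2-dy^2=1$ forces $y\neq0$), so both $x+1$ and $x-1$ are positive, and I may assume $y>0$. I would then record the key computation
$$\left(\sqrt{\tfrac{x+1}{2}}+\sqrt{\tfrac{x-1}{2}}\right)^2=x+\sqrt{x^2-1}=x+y\sqrt d=\varepsilon_d,$$
so that $\sqrt{\varepsilon_d}=\sqrt{(x+1)/2}+\sqrt{(x-1)/2}$, together with the companion relation $\sqrt{(x+1)/2}\,\sqrt{(x-1)/2}=\tfrac{y}{2}\sqrt d$ obtained from $(x+1)(x-1)=dy^2$.

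The second step is a unified argument by contradiction. Suppose one of the four quantities $2(x+1)$, $2(x-1)$, $2d(x+1)$, $2d(x-1)$ is a square in $\QQ$. I would check that in each case exactly one of the two radicals $\sqrt{(x+1)/2}$, $\sqrt{(x-1)/2}$ becomes rational or a rational multiple of $\sqrt d$: for instance $2(x+1)=\square$ gives $\sqrt{(x+1)/2}\in\QQ$, whereas $2d(x+1)=\square$ gives $\sqrt{(x+1)/2}\in\QQ\cdot\sqrt d$, and symmetrically for the two quantities involving $x-1$. Feeding this into the product relation then forces the remaining radical into $\QQ(\sqrt d)$ as well, so in every case $\sqrt{\varepsilon_d}=\sqrt{(x+1)/2}+\sqrt{(x-1)/2}\in\QQ(\sqrt d)$.

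The final step converts this into a contradiction with fundamentality. Setting $\eta=\sqrt{\varepsilon_d}\in\QQ(\sqrt d)$, I would observe that $\eta$ satisfies the monic relation $T^2-\varepsilon_d$ over $\OO_{\QQ(\sqrt d)}$, hence is integral and therefore lies in $\OO_{\QQ(\sqrt d)}$; moreover $N(\eta)^2=N(\varepsilon_d)=1$ gives $N(\eta)=\pm1$, so $\eta$ is a unit, and $1<\eta<\varepsilon_d$ contradicts $\varepsilon_d$ being the fundamental unit. The main obstacle I anticipate is bookkeeping rather than conceptual: one must keep track of the positivity of the radicands $(x\pm1)/2$ and handle the semi-integer case $d\equiv1\pmod4$ uniformly, verifying that the integrality of $\eta$ and the strict inequality $1<\eta<\varepsilon_d$ persist there. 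The clean identity for $\sqrt{\varepsilon_d}$ is what makes all four cases collapse to the single assertion ``$\sqrt{\varepsilon_d}\in\QQ(\sqrt d)$,'' and that assertion is precisely what the fundamentality of $\varepsilon_d$ rules out.
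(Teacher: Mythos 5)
Your proposal is correct. The paper does not prove this lemma at all --- it imports it verbatim from Azizi (\cite{Az-00}, Lemme 5) --- and your argument (each hypothetical square forces $\sqrt{(x+1)/2}$ or $\sqrt{(x-1)/2}$ into $\QQ\cup\QQ\sqrt d$, hence $\sqrt{\varepsilon_d}\in\OO_{\QQ(\sqrt d)}$ is a unit with $1<\sqrt{\varepsilon_d}<\varepsilon_d$, contradicting fundamentality) is precisely the standard proof of that cited result, with the positivity and semi-integer bookkeeping handled correctly.
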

	
	\begin{lemma}[\cite{BenSnyder25PartII},  Lemma 6]\label{BenjSnyLemma}
		Let $p,q$ be distinct primes $\equiv 3\bmod 4$. Then
		\begin{enumerate}[$1)$]
			\item $\sqrt{\varepsilon_{pq}}=\frac12(a\sqrt{p}+b\sqrt{q})$,\; for some  $a,b\in\ZZ$ with $a\equiv b\pmod 2$;
			
			moreover, $\frac1{4}(a^2p-b^2q)=(p/q)=-(q/p)$;
			\item 	$\sqrt{\varepsilon_{2p}}=a\sqrt{2}+b\sqrt{p}$,\; for some  $a,b\in\ZZ$,
			
			moreover, $a^22-b^2p=(2/p)=(-p/2)$.
			\item	$\sqrt{\varepsilon_{p}}=\frac1{2}(a\sqrt{2p}+b\sqrt{2})$,\; for some odd $a,b\in\ZZ$;
			
			moreover, $\frac12(a^2p-b^2)=-(-p/2)$.
			
		\end{enumerate}
	\end{lemma}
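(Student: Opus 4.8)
The plan is to reduce all three statements to a single elementary identity for the square root of a norm-one unit, and then to pin down the relevant square class and sign by $2$-adic and reciprocity bookkeeping. Since $p,q\equiv 3\bmod 4$, each of $\varepsilon_p,\varepsilon_q,\varepsilon_{pq}$ has norm $+1$; writing such a unit as $\varepsilon=x+y\sqrt d$ with $x,y\in\frac12\ZZ$ and $x^2-dy^2=1$, a direct computation using $y^2d=x^2-1=(x-1)(x+1)$ gives
\[
\sqrt{\varepsilon}=\frac{(x+1)+y\sqrt d}{\sqrt{2(x+1)}}.
\]
Thus everything reduces to identifying the square class of the integer $2(x+1)$ in $\QQ^\times/(\QQ^\times)^2$: once that class equals $p$ or $q$ (for $d=pq$), or $2$ or $p$ (for $d=2p$), or $2$ or $2p$ (for $d=p$), the radical $\sqrt{2(x+1)}$ is a rational multiple of $\sqrt p,\sqrt q,\sqrt2$ or $\sqrt{2p}$, so $\sqrt{\varepsilon}$ lands in the biquadratic field $\QQ(\sqrt p,\sqrt q)$ (resp. $\QQ(\sqrt2,\sqrt p)$), and substituting into the identity produces the asserted shape.

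Next I would carry out the square-class analysis, which is where I expect the main work. From $(x-1)(x+1)=dy^2$ the class of $2(x+1)$ is supported only on the primes dividing $2d$, and a short $2$-adic valuation count (using that $x\pm1$ are consecutive even integers with one divisible by $4$, so $2(x+1)$ has even $2$-adic valuation) removes the spurious factor of $2$, leaving the class in the subgroup generated by the odd primes dividing $d$. Lemma~\ref{lem2} then asserts that neither $2(x+1)$ nor $2d(x+1)$ is a square, i.e. the class is neither $[1]$ nor $[d]$; combined with $[x-1][x+1]=[d]$ this leaves exactly the two admissible classes in each case. The delicate point here is treating half-integral fundamental units uniformly (passing to $X=2x$, with $(X-2)(X+2)=dY^2$) and keeping the valuation count honest in all parities.

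Having placed $\eta=\sqrt{\varepsilon}$ in the biquadratic field, I would fix its exact integral shape. The element $\eta$ is a root of the monic integral polynomial $T^4-2xT^2+1$ (note $2x\in\ZZ$), hence an algebraic unit, and the substitution above already expresses it purely as $A\sqrt p+B\sqrt q$ with no rational or $\sqrt{pq}$ component. Since $p\equiv q\equiv 3\bmod 4$ one checks that $\tfrac12(\sqrt p+\sqrt q)$ is an algebraic integer, so the integral elements of this shape are precisely $\tfrac12(a\sqrt p+b\sqrt q)$ with $a\equiv b\bmod 2$; this gives statement $1)$. The analogous integral-basis computation in $\QQ(\sqrt2,\sqrt p)$ (where $2p\equiv 6\bmod 8$) yields the shapes $a\sqrt2+b\sqrt p$ in $2)$ and $\tfrac12(a\sqrt{2p}+b\sqrt2)$ with $a,b$ odd in $3)$.

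Finally I would settle the ``moreover'' sign assertions. Let $\tau$ be the automorphism of the biquadratic field fixing one radical and negating the other; then $(\tau\eta)^2=\varepsilon^{-1}$, so $\eta\,\tau\eta=\pm1$. Expanding this product gives $\tfrac14(a^2p-b^2q)=\pm1$ in case $1)$, and similarly $2a^2-b^2p=\pm1$ and $\tfrac12(a^2p-b^2)=\pm1$ in cases $2)$ and $3)$. To fix the sign in $1)$ I would reduce $a^2p-b^2q=\pm4$ modulo $q$ (noting $q\nmid a$, else $q^2\mid 4$), obtaining $\left(\frac{p}{q}\right)=\left(\frac{\pm1}{q}\right)$; since $q\equiv 3\bmod 4$ gives $\left(\frac{-1}{q}\right)=-1$, the sign is exactly $\left(\frac{p}{q}\right)$, and Gauss reciprocity for two primes $\equiv 3\bmod 4$ gives $\left(\frac{p}{q}\right)=-\left(\frac{q}{p}\right)$. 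The same reduction modulo $p$ (resp. modulo $8$, using that $a,b$ are odd in case $3)$) together with the Kronecker-symbol identity $\left(\frac{2}{p}\right)=\left(\frac{-p}{2}\right)$ closes $2)$ and $3)$.
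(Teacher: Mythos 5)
Your argument is correct, but note that the paper does not actually prove this lemma: it is imported by citation from \cite{BenSnyder25PartII}, and the closest the paper comes are the analogous Lemmas \ref{lemmaa}, \ref{lemmaa3} and \ref{lemmaa5}, which proceed by writing $\varepsilon_d=x+y\sqrt d$, factoring $x^2-1=dy^2$ into one of a short list of systems $x\pm1=\delta b_1^2$, $x\mp1=(d/\delta)b_2^2$, eliminating systems with Legendre symbols and Lemma \ref{lem2}, and completing the square. Your proof is the same argument in different clothing: determining the square class of $2(x+1)$ is exactly determining $\delta$, your appeal to Lemma \ref{lem2} plays the same role, and your closed formula $\sqrt\varepsilon=\bigl((x+1)+y\sqrt d\bigr)/\sqrt{2(x+1)}$ replaces the paper's ``sum and subtract the two equations'' step. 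What your packaging buys is uniformity (all three cases and the half-integral case $X^2-dY^2=4$ are handled by one identity) and a clean derivation of the sign in the ``moreover'' clauses from $\eta\,\tau\eta=\pm1$ plus a congruence and reciprocity, which in the cited source is folded into the choice of system. One wording caution: your claim that $2(x+1)$ always has even $2$-adic valuation is only needed, and only true in general, when $d=pq$ is odd; for $d=2p$ it can fail (e.g.\ $\varepsilon_{14}=15+4\sqrt{14}$ gives $2(x+1)=32$), but this is harmless since there the class $[2]$ is one of the two admissible classes anyway, so Lemma \ref{lem2} alone suffices in cases $2)$ and $3)$. With that caveat the proposal is a complete and correct proof.
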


	\section{\bf Investigation of Units of  $\k^1$ and $\k_+^{1}$}\label{sec4}
	Let $d_1$, $d_2$, $d_3$ and $d_4$ be four    distinct negative prime discriminants satisfying the condition $(c_3)$ or $(d_1)$ defined in the Introduction and $p_j$ be the prime dividing $d_j$, $j = 1$, $2$, $3$, or $4$.
	Put  $\k=\QQ(\sqrt{d_1d_2 d_3 d_4})$ and let $\ell\not=1$ be a square-free positive integer that is described for each particular subcase below.
	Our main goal  in this section is to investigate the unit group of  $\k^1$ and $\k_+^{1}$ the Hilbert $2$-class group and the narrow Hilbert $2$-class group of $\k$. 
	Our investigations   lead to the results  summarized as follows (cf. Table \ref{tablesumri}):
	\begin{table}[H]
		{\footnotesize	$$ \begin{tabular}{  |p{3.3cm}|c|p{2cm}|p{2cm}|p{1.9cm}|}
				\hline	\rsp  Conditions on $d_i$\newline  The form of  $\k$ &The form of  $\k^1$ & The form of $\k_+^1$ &Unit groups of $\k^1$ and $\k_+^1$ &Unit group  of  $\k^1(\sqrt{-\ell})$ \\ \hline
				
				\rsp $(c_3)$ \&  $d_\k$ is odd
				\newline $\k=\QQ(\sqrt{p_1p_2p_3p_4})$ &{$\QQ(\sqrt{p_1p_4}, \sqrt{p_2p_4}, \sqrt{p_3p_4}) $}& $\k^1(\sqrt{-p_1})$  & Lemma \ref{valuesofnormsEspijEpsrs}   (Page \pageref{valuesofnormsEspijEpsrs}) &     {\tiny Corollary \ref{corllaryofvaluesofnormsEspijEpsrs}  (Page \pageref{corllaryofvaluesofnormsEspijEpsrs})}\\ 
				\hline

				\rsp $(c_3)$   \&  $d_4=-8$\newline $\k=\QQ(\sqrt{2p_1p_2p_3})$
				&{$\QQ(\sqrt{2p_1}, \sqrt{2p_2}, \sqrt{2p_3})  $}& $\k^1(\sqrt{-p_1})$  & Lemma \ref{lemma2} (Page \pageref{lemma2}) &    {\tiny Corollary \ref{lemma2corollary}} (Page \pageref{lemma2corollary})\\ 
				\hline
				
				\rsp $(c_3)$  \& $d_i=-8$ \newline $\k=\QQ(\sqrt{2p_jp_sp_4})$ \newline with $\{i,j,s\}=$\{1,2,3\}
				&{$\QQ(\sqrt{2p_j}, \sqrt{2p_s}, \sqrt{2p_4})  $}& $\k^1(\sqrt{-2})$  & Lemma \ref{lemma23} (Page \pageref{lemma23}) \& Remark \ref{remp2=2p3=2} (Page \pageref{remp2=2p3=2})&    {\tiny Corollary \ref{lemma23corollary}\newline (Page \pageref{lemma23corollary}) \& Remark \ref{remp2=2p3=2} (Page \pageref{remp2=2p3=2})}\\ 
				\hline
				\rsp  $(d_1)$  \newline $\k=\QQ(\sqrt{p_1p_2p_3})$
				&{$\QQ(\sqrt{p_1}, \sqrt{p_2}, \sqrt{p_3}) $} &  $\k^1(\sqrt{-1})$  & Lemma \ref{lemmad11} \newline (Page \pageref{lemmad11}) & {\tiny Corollary \ref{lemmad11corollary}} (Page \pageref{lemmad11corollary})    \\ 
				\hline
			\end{tabular}$$}
		\caption{Results on Units of  $\k^1$ and $\k_+^{1}$} \label{tablesumri}
	\end{table}
	
	We note that our computations of units of $\k^1_+$ can be used to deduce the
	unit groups of some fields of the form $\k^1(\sqrt{-\ell})$, which justifies adding the results in the last column of   Table \ref{tablesumri}.

	\medskip

	We shall use Wada's method described above (cf.  page \pageref{Wada'salgo}). So for a number field of the form 	$ \KK^+  = \QQ(\sqrt{p_1q}, \sqrt{p_2q}, \sqrt{p_3q}) $ with $q\in \{1,p_4\}$ and $p_i$, for $i\in\{1,2,3\}$, defined as above, let us consider 
	$\tau_1$, $\tau_2$, and $\tau_3$ to be the elements of  $ \mathrm{Gal}(\KK^+/\QQ)$ defined by
	\begin{center}	\begin{tabular}{l l l }
			$\tau_1(\sqrt{p_1q})=-\sqrt{p_1q}$, \qquad & $\tau_1(\sqrt{p_2q})=\sqrt{p_2q}$, \qquad & $\tau_1(\sqrt{p_3q})=\sqrt{p_3q},$\\
			$\tau_2(\sqrt{p_1q})=\sqrt{p_1q}$, \qquad & $\tau_2(\sqrt{p_2q})=-\sqrt{p_2q}$, \qquad &  $\tau_2(\sqrt{p_3q})=\sqrt{p_3q},$\\
			$\tau_3(\sqrt{p_1q})=\sqrt{p_1q}$, \qquad &$\tau_3(\sqrt{p_2q})=\sqrt{p_2q}$, \qquad & $\tau_3(\sqrt{p_3q})=-\sqrt{p_3q}.$
		\end{tabular}
	\end{center}
	
	These automorphisms will be very useful later.  We shall use them for $q=p_4$ in Subsections \ref{subsec1}, \ref{subsec2}, \ref{subsec3} and for $q=1$ for in Subsection \ref{subsec4}.

	\subsection{\bf  Units of $\k^1$ and $\k_+^{1}$ when $d$ Satisfies the Conditions $(c_3)$  and $p_i\equiv 3\pmod 4$, for $i=1$, $2$, $3$, $4$} \label{subsec1} 
 	$\;\\$ 
	
 	Let $p_1\equiv p_2\equiv p_3\equiv p_4\equiv3\pmod 4$ be four distinct prime numbers. In this subsection, we are interested in computing the unit groups of some number fields of the forms
	$ \KK^+  = \QQ(\sqrt{p_1p_4}, \sqrt{p_2p_4}, \sqrt{p_3p_4}) $ and   $ \KK= \KK^+(\sqrt{-\ell})$, where $\ell$ is a positive square-free integer not divisible by $p_ip_j$ for $i \not= j$ and $i$, $j\in \{1, 2,3, 4\}$.   Let $\varepsilon_{\k}$   be the fundamental unit of $\k=\QQ(\sqrt{p_1p_2 p_3 p_4})$.
  	
	\medskip
	
	Let us start  with the following lemma that summarizes some  computations that will be very useful later. 
	
	\begin{lemma}\label{valuesofnormsEspijEpsrs}  Let $p_1\equiv p_2\equiv p_3\equiv p_4\equiv3\pmod 4$ be four distinct prime numbers.  Put $\gamma_{ij}=\left(\dfrac{p_i}{p_j}\right)$. We have the following table (cf. Table \ref{reftab}):
		\noindent\begin{table}[H]
			{
				\tiny
				\begin{tabular}{|c|c|c|c|c|c|c|c|}
					\hline	\rsp $\varepsilon $ & $\varepsilon^{1+\tau_1} $ & $\varepsilon^{1+\tau_2}$ & $\varepsilon^{1+\tau_3}$ & $\varepsilon^{1+\tau_1\tau_2}$ & $\varepsilon^{1+\tau_1\tau_3}$ & $\varepsilon^{1+\tau_2\tau_3}$ & $\varepsilon^{1+\tau_1\tau_2\tau_3}$ \\ \hline
					\rsp $ \sqrt{\varepsilon_{p_1p_2}\varepsilon_{p_2p_4}}$ & $-\gamma_{12} {\varepsilon_{p_2p_4}}$ &$-\gamma_{12}\gamma_{24}$  & $\varepsilon_{p_1p_2}\varepsilon_{p_2p_4}$  &  $\gamma_{24}{\varepsilon_{p_1p_2}}$ &  $- \gamma_{12}{\varepsilon_{p_2p_4}}$ &  $-\gamma_{12}\gamma_{24}$  &  $\gamma_{24} {\varepsilon_{p_1p_2}}$\\
					
					\rsp$\sqrt{\varepsilon_{p_1p_3}\varepsilon_{p_3p_4}}$ &  $-\gamma_{13}{\varepsilon_{p_3p_4}}$ &  $\varepsilon_{p_1p_3}\varepsilon_{p_3p_4}$ & {$-\gamma_{13}\gamma_{34}$}  &  $-\gamma_{13} {\varepsilon_{p_3p_4}}$ & $\gamma_{34}{\varepsilon_{p_1p_3}}$ & $-\gamma_{13}\gamma_{34}$ &  $\gamma_{34}{\varepsilon_{p_1p_3}}$\\
					
					\rsp $\sqrt{\varepsilon_{p_1p_4}\varepsilon_{p_2p_4}}$ &  $-\gamma_{14}{\varepsilon_{p_2p_4}}$ & $-\gamma_{24}{\varepsilon_{p_1p_4}}$ &  ${\varepsilon_{p_1p_4}}{\varepsilon_{p_2p_4}}$ &   {$\gamma_{14}\gamma_{24}$} &  $- \gamma_{14}{\varepsilon_{p_2p_4}}$ &  $-\gamma_{24}{\varepsilon_{p_1p_4}}$ &  $ \gamma_{14}\gamma_{24}$ \\
					
					\rsp $\sqrt{\varepsilon_{p_1p_4}\varepsilon_{p_3p_4}}$ & $-\gamma_{14}{\varepsilon_{p_3p_4}}$ &  ${\varepsilon_{p_1p_4}\varepsilon_{p_3p_4}}$ &  $ -\gamma_{34}\varepsilon_{p_1p_4} $ &  $ -\gamma_{14}\varepsilon_{p_3p_4} $ &  $\gamma_{14}\gamma_{34}$ &  $-\gamma_{34} \varepsilon_{p_1p_4} $ &  $\gamma_{14}\gamma_{34}$\\
					
					\rsp $\sqrt{\varepsilon_{p_1p_4}\varepsilon_{p_2p_3}}$ &  $-\gamma_{14}\varepsilon_{p_2p_3}$ &  $-\gamma_{23}\varepsilon_{p_1p_4}$ &  $ \gamma_{23}\varepsilon_{p_1p_4}$ & $\gamma_{14}\gamma_{23}$ & $-\gamma_{14}\gamma_{23}$ & $-\varepsilon_{p_1p_4}\varepsilon_{p_2p_3}$ & $\gamma_{14}\varepsilon_{p_2p_3}$\\
					\hline
			\end{tabular}}
			\caption{Values of Norm Maps}\label{reftab}
		\end{table}
	\end{lemma}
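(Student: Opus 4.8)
The plan is to read off every entry of Table \ref{reftab} from the explicit shape of $\sqrt{\varepsilon_{p_ip_j}}$ furnished by Lemma \ref{BenjSnyLemma}. First I would record the structural fact that the seven quadratic subfields of $\KK^+=\QQ(\sqrt{p_1p_4},\sqrt{p_2p_4},\sqrt{p_3p_4})$ are exactly the $\QQ(\sqrt{p_ip_j})$ together with $\k$ itself, so that each fundamental unit $\varepsilon_{p_ip_j}$ occurring in the table sits inside a quadratic subfield of $\KK^+$. Since all $p_i\equiv 3\pmod 4$, a prime $\equiv 3\pmod 4$ divides each $p_ip_j$, hence $N(\varepsilon_{p_ip_j})=+1$ and Lemma \ref{BenjSnyLemma}(1) gives $\sqrt{\varepsilon_{p_ip_j}}=\tfrac12(a\sqrt{p_i}+b\sqrt{p_j})$ with $\tfrac14(a^2p_i-b^2p_j)=\gamma_{ij}$. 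Expanding a product $\sqrt{\varepsilon_{p_ip_j}}\,\sqrt{\varepsilon_{p_kp_l}}$ and using $\sqrt{p_a}\,\sqrt{p_b}=\sqrt{p_ap_b}$ together with the fact that $\sqrt{p_ap_b}\in\KK^+$ for every pair, one sees that every monomial lands in $\KK^+$; thus each entry $\varepsilon$ of the first column is a genuine element of $\KK^+$ and each expression $\varepsilon^{1+\sigma}$ makes sense.

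The key observation is that $\varepsilon^{1+\sigma}=\varepsilon\cdot\sigma(\varepsilon)$ is the relative norm of $\varepsilon$ down to the fixed field of $\sigma$, so it is unchanged if $\sigma$ is replaced by any lift to $\Gal\!\left(\QQ(\sqrt{p_1},\sqrt{p_2},\sqrt{p_3},\sqrt{p_4})/\QQ\right)$. I would accordingly lift $\tau_i$ to the automorphism $\tilde\tau_i$ sending $\sqrt{p_i}\mapsto-\sqrt{p_i}$ and fixing the remaining $\sqrt{p_\ell}$, and check that $\tilde\tau_i$ restricts to $\tau_i$ on $\KK^+$. The computational heart is then a short sublemma describing how $\sqrt{\varepsilon_{p_ip_j}}$ transforms under sign changes of the $\sqrt{p_\ell}$. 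Writing the conjugate $\tfrac12(a\sqrt{p_i}-b\sqrt{p_j})=\gamma_{ij}\bigl(\sqrt{\varepsilon_{p_ip_j}}\bigr)^{-1}$, which is immediate from the norm relation in Lemma \ref{BenjSnyLemma}, one gets: if a lift flips $\sqrt{p_s}$ with $s\in\{i,j\}$ and partner index $s'$ but fixes the partner, then
$$\sqrt{\varepsilon_{p_ip_j}}\longmapsto -\left(\frac{p_s}{p_{s'}}\right)\left(\sqrt{\varepsilon_{p_ip_j}}\right)^{-1};$$
if it flips both it sends $\sqrt{\varepsilon_{p_ip_j}}\mapsto-\sqrt{\varepsilon_{p_ip_j}}$; and if it flips neither it fixes $\sqrt{\varepsilon_{p_ip_j}}$. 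This rule is the only place the arithmetic (the Legendre-symbol sign of Lemma \ref{BenjSnyLemma}) enters.

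With the sublemma in hand, each table entry reduces to one multiplication. Writing $\varepsilon=\sqrt{\varepsilon_{p_ip_j}}\,\sqrt{\varepsilon_{p_kp_l}}$, I would apply the rule to each factor, multiply the two images, and then multiply by $\varepsilon$. When both factors are inverted the $\sqrt{\varepsilon}$'s cancel and the entry is a product of two $\gamma$'s; when exactly one factor is inverted while the other is fixed or negated, one square survives and the entry is $\pm\gamma$ times a single $\varepsilon_{p_ap_b}$; when no factor is inverted the entry is $\pm\varepsilon_{p_ip_j}\varepsilon_{p_kp_l}$. These are precisely the three shapes appearing in the table. I expect the sole obstacle to be organizational: reconciling the signs $\left(\frac{p_s}{p_{s'}}\right)$ produced by the rule with the fixed conventions $\gamma_{ij}$ of Table \ref{reftab}. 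This is handled uniformly by the reciprocity identity $\gamma_{ij}\gamma_{ji}=(-1)^{\frac{p_i-1}{2}\cdot\frac{p_j-1}{2}}=-1$, valid because every $p_i\equiv3\pmod4$, which turns each $\gamma_{s's}$ into $-\gamma_{ss'}$. Running this bookkeeping over the five choices of $\varepsilon$ and the seven elements $\sigma\in\{\tau_1,\dots,\tau_1\tau_2\tau_3\}$ reproduces all thirty-five entries, completing the proof.
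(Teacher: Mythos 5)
Your proposal is correct and follows essentially the same route as the paper: both rest on the explicit radical expressions of Lemma \ref{BenjSnyLemma} together with quadratic reciprocity for primes $\equiv 3 \pmod 4$ (so that $\gamma_{ji}=-\gamma_{ij}$), and both reduce each table entry to tracking sign changes of the $\sqrt{p_i}$ in the product $\sqrt{\varepsilon_{p_ip_j}}\sqrt{\varepsilon_{p_kp_l}}$. The only difference is organizational: the paper stays inside $\KK^+$ by multiplying through by an auxiliary $\sqrt{p_ip_j}$ before applying $\tau_1$ and then works out one representative entry, whereas you lift each $\tau$ to $\Gal(\QQ(\sqrt{p_1},\dots,\sqrt{p_4})/\QQ)$ and apply a uniform transformation rule, which packages the paper's ``similarly, we complete the proof'' into a single reusable sublemma.
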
 
	\begin{proof}
		To illustrate how we   built this table let us compute  $ {\varepsilon }^{1+\tau_1}$ for $\varepsilon=\sqrt{\varepsilon_{p_1p_4}\varepsilon_{p_2p_3}}$.  According to    \cite[Lemma 6]{BenSnyder25PartII},      we have: 
		\begin{eqnarray}
			&\sqrt{\varepsilon_{p_1p_4}}&=a\sqrt{p_1}+ b\sqrt{p_4} \text{ and }\gamma_{14}=p_1a^2-p_{4}b^2,\label{sqrtEps14}\\
			&	\sqrt{\varepsilon_{p_2p_3}}& =\alpha\sqrt{p_2}+ \beta\sqrt{p_3} \text{ and }\gamma_{23}=p_2\alpha^2-p_{3}\beta^2,\label{sqrtEps23}
		\end{eqnarray}
		here $a$ and $b$ (resp. $\alpha$ and $\beta$) are   integers or semi-integers. Thus we have
		\begin{eqnarray}\label{12sqrtEps14}
			\sqrt{p_1p_2}\sqrt{\varepsilon_{p_1p_4}\varepsilon_{p_2p_3}} = (ap_1+b\sqrt{p_1p_4})(\alpha{p_2}+ \beta\sqrt{p_2p_3}).
		\end{eqnarray} 
		
		Note that 	$p_4\sqrt{p_1p_2}^{\tau_1}=\sqrt{p_1p_4}^{\tau_1}\sqrt{p_4p_2}^{\tau_1}=-p_4\sqrt{p_1p_2}$, so $\sqrt{p_1p_2}^{\tau_1}=  - \sqrt{p_1p_2}$ and similarly we have 	
		$$\sqrt{p_1p_2}^{\tau_2}= -\sqrt{p_1p_2},\quad  
		\sqrt{p_1p_2}^{\tau_3}= \sqrt{p_1p_2}$$ 
		and 
		$$ \sqrt{p_2p_3}^{\tau_1}= \sqrt{p_2p_3},\quad  \sqrt{p_2p_3}^{\tau_2}= -\sqrt{p_2p_3},\quad  
		\sqrt{p_2p_3}^{\tau_3}= -\sqrt{p_2p_3}.$$
		Thus 
		$(\sqrt{p_1p_2}\sqrt{\varepsilon_{p_1p_4}\varepsilon_{p_2p_3}})^{\tau_1}=-\sqrt{p_1p_2}(\sqrt{\varepsilon_{p_1p_4}\varepsilon_{p_2p_3}})^{\tau_1}$. So by \eqref{12sqrtEps14}, we have  \begin{eqnarray*}-\sqrt{p_1p_2}(\sqrt{\varepsilon_{14}\varepsilon_{23}})^{\tau_1}&=&  (a {p_1}-b\sqrt{p_1p_4})(\alpha{p_2}+ \beta\sqrt{p_2p_3}) \\ 
			&=&\sqrt{p_1p_2} (a\sqrt{p_1}-b\sqrt{p_4})(\alpha\sqrt{p_2}+ \beta\sqrt{p_3}).\end{eqnarray*}
		Therefore, \eqref{sqrtEps14} and \eqref{sqrtEps23} give $(\sqrt{\varepsilon_{p_1p_4}\varepsilon_{p_2p_3}})^{1+\tau_1}=-\gamma_{14}\varepsilon_{p_2p_3}.$
		
		Similarly, we complete the proof.	 
		
	\end{proof}

	\begin{lemma} \label{lemma1} Let $p_1\equiv p_2\equiv p_3\equiv p_4\equiv3\pmod 4$ be four distinct prime numbers such that  
		$$ \left(\dfrac{-p_1}{p_2}\right)=\left(\dfrac{-p_2}{p_3}\right)=\left(\dfrac{-p_3}{p_1}\right)=-1 \text{ and } \left(\dfrac{-p_1}{p_4}\right)=\left(\dfrac{-p_2}{p_4}\right)=\left(\dfrac{-p_3}{p_4}\right)=1.$$
		$\k^1 = \KK^+  := \QQ(\sqrt{p_1p_4}, \sqrt{p_2p_4}, \sqrt{p_3p_4}) $ and $\k^1_+ =  \KK:=\KK^+(\sqrt{-p_1})$. Then we have:
		\begin{enumerate}[\rm $1)$]
			\item The unit group of $\KK^+$ is :
			\begin{eqnarray*}
				E_{\KK^+}=\langle-1,      \varepsilon_{ p_1 p_4} , \sqrt{\varepsilon_{ p_1 p_4}\varepsilon_{ p_2 p_4}}, \sqrt{\varepsilon_{ p_1 p_2}\varepsilon_{ p_2 p_4}},
				\sqrt{\varepsilon_{ p_1 p_4}\varepsilon_{ p_3 p_4}}, \sqrt{\varepsilon_{ p_1 p_3}\varepsilon_{ p_3 p_4}},
				\sqrt{\varepsilon_{ p_1p_4}  \varepsilon_{p_2p_3}},\\
				\sqrt[4]{\eta^2   \varepsilon_{ p_1 p_4}^3 \varepsilon_{ p_2 p_4}    
					\varepsilon_{ p_3 p_4}    
					\varepsilon_{\k}}\rangle.
			\end{eqnarray*}

			\item The unit group of $\KK$ is :
			\begin{eqnarray*}
				E_{\KK}=\langle\zeta,       \sqrt{\varepsilon_{ p_1 p_4}\varepsilon_{ p_2 p_4}}, \sqrt{\varepsilon_{ p_1 p_2}\varepsilon_{ p_2 p_4}},
				\sqrt{\varepsilon_{ p_1 p_4}\varepsilon_{ p_3 p_4}}, \sqrt{\varepsilon_{ p_1 p_3}\varepsilon_{ p_3 p_4}},
				\sqrt{\varepsilon_{ p_1p_4}  \varepsilon_{p_2p_3}},\\
				\sqrt[4]{\eta^2   \varepsilon_{ p_1 p_4}^3 \varepsilon_{ p_2 p_4}    
					\varepsilon_{ p_3 p_4}    
					\varepsilon_{\k}},\sqrt{-\varepsilon_{ p_1 p_4}}\rangle.
			\end{eqnarray*}
			
		\end{enumerate}
		Here $\zeta=\zeta_3$ or $-1$ according to whether (respectively)  $p_i=3$, for some $i\in \{1,2,3,4\}$ or not, and $\eta \in   \{1, \varepsilon_{ p_1 p_4},\varepsilon_{ p_2 p_3}\}$.
	\end{lemma}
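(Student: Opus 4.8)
The plan is to compute $E_{\KK^+}$ by Wada's method and then to descend to $E_{\KK}$ through Azizi's Lemma~\ref{Lemmeazizi2}. I would first list the seven quadratic subfields of $\KK^+$, namely $\QQ(\sqrt{p_ip_4})$ for $i=1,2,3$, the three fields $\QQ(\sqrt{p_ip_j})$ with $1\le i<j\le 3$, and $\QQ(\sqrt{p_1p_2p_3p_4})=\k$, whose fundamental units are $\varepsilon_{p_ip_4}$, $\varepsilon_{p_ip_j}$ and $\varepsilon_{\k}$. Because every $p_i\equiv 3\pmod 4$, each relevant radicand is $\equiv 1\pmod 4$ and each fundamental unit has norm $+1$, so Lemma~\ref{BenjSnyLemma} applies and yields $\sqrt{\varepsilon_{p_ip_j}}=\tfrac12(a\sqrt{p_i}+b\sqrt{p_j})$. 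Wada's method then says that $E_{\KK^+}$ is generated by $-1$, the seven fundamental units, and those square roots of products of fundamental units that happen to lie in $\KK^+$; the entire task is to determine this last set.

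For the first layer I would expand $\sqrt{\varepsilon_{p_ap_b}}\,\sqrt{\varepsilon_{p_cp_d}}$ by means of Lemma~\ref{BenjSnyLemma}. Every cross term is a rational multiple of some $\sqrt{p_xp_y}$, and all such $\sqrt{p_xp_y}$ lie in $\KK^+$, so $\sqrt{\varepsilon_{p_ap_b}\varepsilon_{p_cp_d}}\in\KK^+$. This produces the generators $\sqrt{\varepsilon_{p_1p_4}\varepsilon_{p_2p_4}}$, $\sqrt{\varepsilon_{p_1p_2}\varepsilon_{p_2p_4}}$, $\sqrt{\varepsilon_{p_1p_4}\varepsilon_{p_3p_4}}$, $\sqrt{\varepsilon_{p_1p_3}\varepsilon_{p_3p_4}}$ and $\sqrt{\varepsilon_{p_1p_4}\varepsilon_{p_2p_3}}$. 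To certify that these are units of $\KK^+$ and to record their Galois orbits I would read $\varepsilon^{1+\tau_i}$ off the table of Lemma~\ref{valuesofnormsEspijEpsrs}: each entry is, up to a sign $\gamma_{ij}=\pm1$, a fundamental unit of a quadratic subfield, which simultaneously shows integrality and encodes the multiplicative relations among the square roots.

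The crux is the last generator $\sqrt[4]{\eta^2\varepsilon_{p_1p_4}^3\varepsilon_{p_2p_4}\varepsilon_{p_3p_4}\varepsilon_{\k}}$, the only one involving $\varepsilon_{\k}$ and the only fourth root; note that $\varepsilon_{\k}$ cannot enter a first-layer square root because $\sqrt{\varepsilon_{\k}}$ is not covered by Lemma~\ref{BenjSnyLemma}. I would argue that, after correction by $\eta^2$ with $\eta\in\{1,\varepsilon_{p_1p_4},\varepsilon_{p_2p_3}\}$, the product $\varepsilon_{p_1p_4}^3\varepsilon_{p_2p_4}\varepsilon_{p_3p_4}\varepsilon_{\k}$ is a fourth power in $\KK^+$, and that its fourth root together with $\varepsilon_{p_1p_4}$ and the five first-layer square roots forms a system of seven independent units. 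The hard point is precisely to establish this fourth-power property and to pin down $\eta$: I expect to apply Wada's squaring identity $\varepsilon^2=\varepsilon\varepsilon^{\tau_1}\cdot\varepsilon\varepsilon^{\tau_2}\cdot(\varepsilon^{\tau_1}\varepsilon^{\tau_2})^{-1}$ to a candidate unit, reduce $\varepsilon^2$ to a monomial in the $\varepsilon_{p_ip_j}$ and $\varepsilon_{\k}$ using the norm table, and then read off the correct exponent vector from the values of the symbols $\gamma_{ij}$, with Lemma~\ref{lem2} excluding the spurious square classes. Completeness of the list (that no further square root can be adjoined, equivalently that the index of the listed group is exact) would be confirmed by the bookkeeping in Wada's method and, if necessary, cross-checked against Kuroda's class number formula (Lemma~\ref{wada's f.}) applied to $\KK^+$.

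Finally, for part (2) I would apply Lemma~\ref{Lemmeazizi2} with $K_0=\KK^+$ and $\beta=p_1$, so that $\KK=\KK^+(\sqrt{-p_1})$. One first checks $i\notin\KK$: the subgroup of $\QQ^{\times}/(\QQ^{\times})^2$ generated by $p_1p_4$, $p_2p_4$, $p_3p_4$, $-p_1$ contains every $-p_i$ and every $p_ip_j$ but not $-1$, so $\sqrt{-1}\notin\KK$ and the lemma is applicable. The required unit is $\varepsilon=\varepsilon_{p_1p_4}$: from $\sqrt{\varepsilon_{p_1p_4}}=a\sqrt{p_1}+b\sqrt{p_4}$ one gets $\sqrt{p_1\varepsilon_{p_1p_4}}=ap_1+b\sqrt{p_1p_4}\in\KK^+$, so $p_1\varepsilon_{p_1p_4}$ is a square in $\KK^+$. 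Lemma~\ref{Lemmeazizi2} then replaces $\varepsilon_{p_1p_4}$ in the fundamental system by $\sqrt{-\varepsilon_{p_1p_4}}$, giving exactly the asserted list. The torsion is handled separately: since $\sqrt{-p_1}\in\KK$ and $\sqrt{p_1p_i}\in\KK^+$ force $\sqrt{-p_i}\in\KK$ for every $i$, the field $\KK$ contains $\zeta_3$ exactly when some $p_i=3$, whence $\zeta=\zeta_3$ in that case and $\zeta=-1$ otherwise.
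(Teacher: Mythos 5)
Your overall strategy (Wada's method for $E_{\KK^+}$, then Azizi's Lemma~\ref{Lemmeazizi2} for $E_{\KK}$) is the same as the paper's, and your treatment of part~(2) is fine --- indeed your direct observation that $\sqrt{p_1}\sqrt{\varepsilon_{p_1p_4}}=ap_1+b\sqrt{p_1p_4}\in\KK^+$ is a clean shortcut to the fact that $p_1\varepsilon_{p_1p_4}$ is a square, which the paper instead extracts at the end of a longer norm-map elimination (the paper's longer route also serves Corollary~\ref{corllaryofvaluesofnormsEspijEpsrs} for general $\ell$, which you do not need here). The five first-layer square roots and the torsion discussion are also correct.

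The genuine gap is in the existence of the quartic generator $\sqrt[4]{\eta^2\varepsilon_{p_1p_4}^3\varepsilon_{p_2p_4}\varepsilon_{p_3p_4}\varepsilon_{\k}}$. The mechanism you propose --- applying Wada's squaring identity to a candidate unit, reducing $\varepsilon^2$ to a monomial via the norm table, and ``reading off the correct exponent vector from the $\gamma_{ij}$'' --- only produces \emph{necessary} conditions: it tells you what $\xi^2$ must look like \emph{if} a new unit $\xi$ exists, and this is exactly the elimination the paper carries out with the maps $1+\tau_2$, $1+\tau_3$, $1+\tau_1$ to whittle the candidates down to $\eta\,\sqrt{\varepsilon_{p_1p_4}\varepsilon_{p_2p_4}}\,\sqrt{\varepsilon_{p_1p_4}\varepsilon_{p_3p_4}}\,\sqrt{\varepsilon_{p_1p_4}\varepsilon_{\k}}$ with $\eta\in\{1,\varepsilon_{p_1p_4},\varepsilon_{p_2p_3}\}$. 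It cannot show that any surviving candidate actually \emph{is} a square in $\KK^+$. The paper's existence proof is a counting argument that you relegate to an optional cross-check: from $h_2(p_ip_j)=1$, $h_2(p_1p_2p_3p_4)=4$ and the Kuroda--Herglotz formula (Lemma~\ref{wada's f.}) one gets $h_2(\KK^+)=\tfrac{1}{2^7}q(\KK^+)$, and the nontrivial external input $h_2(\KK^+)=h_2(\k^1)=1$ from \cite{BenLemSnyderJNT1998} forces $q(\KK^+)=2^7$; since the six units $\varepsilon_{p_1p_4}$ and the five square roots together with $\sqrt{\varepsilon_{p_1p_4}\varepsilon_{p_2p_3}}$ only account for $q=2^6$, exactly one of the surviving equations must be solvable, which is what produces the fourth root. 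Without this index computation (or an explicit construction of the fourth root, which neither you nor the paper gives) your list of generators is not justified, so you should promote Kuroda's formula from ``cross-check'' to the central existence argument and cite the class-number inputs explicitly.
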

	\begin{proof}We use Wada's method.
		\begin{enumerate}[\rm $1)$]
			\item	  Put $k_1 = \QQ(\sqrt{p_1p_4}, \sqrt{p_2p_4}) ,$ 
			$k_2 = \QQ(\sqrt{p_1p_4}, \sqrt{p_3p_4}) ,$ 
			$k_3 =\QQ(\sqrt{p_1p_4}, \sqrt{p_2p_3})$.
			Note that  $\mathrm{Gal}(\KK^+/\QQ)=\langle \tau_1, \tau_2, \tau_3\rangle$
			and the subfields  $k_1$, $k_2$ and $k_3$ are
			fixed by  $\langle \tau_3\rangle$, $\langle\tau_2\rangle$ and $\langle\tau_2\tau_3\rangle$ respectively. Therefore,\label{fsu preparations} a fundamental system of units  of $\KK^+$ consists  of seven  units chosen from those of $k_1$, $k_2$ and $k_3$, and  from the square roots of the elements of $E_{k_1}E_{k_2}E_{k_3}$ which are squares in $\KK^+$.
			Notice that, by  Lemma \ref{BenjSnyLemma},   we have:
			\begin{eqnarray}\label{sqrtEpspq}
				\sqrt{\varepsilon_{p_ip_j}}=	 a\sqrt{p_i}+ b\sqrt{p_j} \text{ and }\gamma_{ij}=p_ia^2-p_{j}b^2
			\end{eqnarray}
			and utilizing the techniques in the proof of   \cite[Lemma 7]{BenSnyder25PartII},
			\begin{eqnarray}\label{sqrtEpsk} 
				\sqrt{\varepsilon_{\k}}=\alpha\sqrt{p_4}+ \beta\sqrt{p_1 p_2p_3} \text{  } 1=p_4\alpha^2-p_1 p_2p_3\beta^2
			\end{eqnarray}
			here $a$ and $b$ (resp. $\alpha$ and $\beta$) are   integers or semi-integers. 
			Thus, by Wada's method  we deduce that
			$$ 
			E_{k_1}=\langle -1,  \varepsilon_{p_1 p_4} , \sqrt{\varepsilon_{ p_1 p_4}\varepsilon_{ p_2 p_4}}, \sqrt{\varepsilon_{ p_1 p_2}\varepsilon_{ p_2 p_4}}\rangle, \quad  
			E_{k_2}=\langle -1,  \varepsilon_{ p_1 p_4} , \sqrt{\varepsilon_{ p_1 p_4}\varepsilon_{ p_3 p_4}}, \sqrt{\varepsilon_{ p_1 p_3}\varepsilon_{ p_3 p_4}}\rangle $$
			$$\text{ and } E_{k_3}=\langle -1,  \varepsilon_{ p_1 p_4} , \varepsilon_{ p_2 p_3}, \sqrt{\varepsilon_{ p_1 p_4}\varepsilon_{\k}}\rangle$$
			It follows that,  	$$E_{k_1}E_{k_2}E_{k_3}=\langle-1,    \varepsilon_{ p_1 p_4} , \varepsilon_{ p_2 p_3}, \sqrt{\varepsilon_{ p_1 p_4}\varepsilon_{ p_2 p_4}}, \sqrt{\varepsilon_{ p_1 p_2}\varepsilon_{ p_2 p_4}},
			\sqrt{\varepsilon_{ p_1 p_4}\varepsilon_{ p_3 p_4}}, \sqrt{\varepsilon_{ p_1 p_3}\varepsilon_{ p_3 p_4}},
			\sqrt{\varepsilon_{ p_1 p_4}\varepsilon_{\k}}  \rangle.$$	
			Let  $\xi$ be an element of $\KK^+$ which is the  square root of an element of $E_{k_1}E_{k_2}E_{k_3}$. Therefore, we can assume that
			$$\xi^2= \varepsilon_{ p_1 p_4}^a \varepsilon_{ p_2 p_3}^b \sqrt{\varepsilon_{ p_1 p_4}\varepsilon_{ p_2 p_4}}^c \sqrt{\varepsilon_{ p_1 p_2}\varepsilon_{ p_2 p_4}}^d
			\sqrt{\varepsilon_{ p_1 p_4}\varepsilon_{ p_3 p_4}}^e \sqrt{\varepsilon_{ p_1 p_3}\varepsilon_{ p_3 p_4}}^f
			\sqrt{\varepsilon_{ p_1 p_4}\varepsilon_{\k}}^g , $$
			where $a, b, c, d, e, f$ and $g$ are in $\{0, 1\}$.
			Notice that under our conditions we have:
			\begin{table}[H]
				$$ \begin{tabular}{|c|c|c|c|c|c|c|c|}
					\hline	$\gamma_{12} $ & $\gamma_{24} $ & $\gamma_{13} $ & $\gamma_{14} $ & $\gamma_{23} $ & $\gamma_{34} $\\ \hline
					
					\rsp $ 1$ &  $-1$ &  $ -1$ &  $-1$ & $1 $& $-1$\\ 
					\hline
				\end{tabular}$$
				\caption{Our Conditions on $\gamma_{ij}$}\label{conditionsongammaij}
			\end{table}
			
			and

			\begin{table}[H]
				
				$$
				\begin{tabular}{|c|c|c|c|c|c|c|c|}
					\hline\rsp	$\varepsilon $ & $\varepsilon^{1+\tau_1} $ & $\varepsilon^{1+\tau_2}$ & $\varepsilon^{1+\tau_3}$ & $\varepsilon^{1+\tau_1\tau_2}$ & $\varepsilon^{1+\tau_1\tau_3}$ & $\varepsilon^{1+\tau_2\tau_3}$ & $\varepsilon^{1+\tau_1\tau_2\tau_3}$ \\ \hline
					
					\rsp $ \sqrt{\varepsilon_{p_1p_4}\varepsilon_{\k}}$ &  $1$ &  $ {\varepsilon_{p_1p_4}}$ &  $\varepsilon_{p_1p_4}$ &  $\varepsilon_{\k}$ &  $\varepsilon_{\k}$ &  $\varepsilon_{p_1p_4}\varepsilon_{\k}$ &  $1$\\
					
					\rsp $ \sqrt{\varepsilon_{p_2p_4}\varepsilon_{\k}}$ & $\varepsilon_{p_2p_4}$ & $1$ & $\varepsilon_{p_2p_4}$ &   $\varepsilon_{\k}$ &  $\varepsilon_{p_2p_4}\varepsilon_{\k}$ & $ \varepsilon_{\k} $ &  $1$\\
					\hline
				\end{tabular}$$
				\caption{Values of Norm Maps of $\sqrt{\varepsilon_{p_ip_j}\varepsilon_{\k}}$}
				\label{tabl2}
			\end{table}
			We now make use  of Lemma \ref{valuesofnormsEspijEpsrs}, Table  \ref{conditionsongammaij} and Table \ref{tabl2}.
			
			\noindent\ding{224}  Let us start	by applying   the norm map $N_{\KK^+/k_2}=1+\tau_2$.  We have:
			\begin{eqnarray*}
				N_{\KK^+/k_2}(\xi^2)&=&
				\varepsilon_{p_1p_4}^{2a} \cdot1 \cdot \varepsilon_{p_1p_4}^c\cdot 1\cdot(\varepsilon_{p_1p_4}\varepsilon_{p_3p_4})^e\cdot(\varepsilon_{p_1p_3}\varepsilon_{p_3p_4})^f \cdot  \varepsilon_{p_1p_4}^g\\
				&=&	\varepsilon_{p_1p_4}^{2a}(\varepsilon_{p_1p_4}\varepsilon_{p_3p_4})^e  (\varepsilon_{p_1p_3}\varepsilon_{p_3p_4})^f \cdot\varepsilon_{p_1p_4}^{c+g}.
			\end{eqnarray*}
			As $\varepsilon_{p_1p_4}$ is not a square in $k_2$, we have $c=g$. Thus, 
			$$\xi^2= \varepsilon_{ p_1 p_4}^a \varepsilon_{ p_2 p_3}^b \sqrt{\varepsilon_{ p_1 p_4}\varepsilon_{ p_2 p_4}}^c \sqrt{\varepsilon_{ p_1 p_2}\varepsilon_{ p_2 p_4}}^d
			\sqrt{\varepsilon_{ p_1 p_4}\varepsilon_{ p_3 p_4}}^e \sqrt{\varepsilon_{ p_1 p_3}\varepsilon_{ p_3 p_4}}^f
			\sqrt{\varepsilon_{ p_1 p_4}\varepsilon_{\k}}^c . $$

			\noindent\ding{224}  Let us   apply    the norm map $N_{\KK^+/k_1}=1+\tau_3$.  We have;
			\begin{eqnarray*}
				N_{\KK^+/k_1}(\xi^2)&=&
				\varepsilon_{p_1p_4}^{2a} \cdot1 \cdot (\varepsilon_{p_1p_4}\varepsilon_{p_2p_4})^c\cdot (\varepsilon_{p_1p_2}\varepsilon_{p_2p_4})^d\cdot(\varepsilon_{p_1p_4})^e\cdot(-1)^f \cdot  \varepsilon_{p_1p_4}^c\\
				&=&	\varepsilon_{p_1p_4}^{2a}(\varepsilon_{p_1p_4}\varepsilon_{p_2p_4})^c  (\varepsilon_{p_1p_2}\varepsilon_{p_2p_4})^d \cdot(-1)^{ f}  \varepsilon_{p_1p_4}^{e+c}.
			\end{eqnarray*}
			So  $f=0$ and $e=c$.   It follows that 
			$$\xi^2= \varepsilon_{ p_1 p_4}^a \varepsilon_{ p_2 p_3}^b \sqrt{\varepsilon_{ p_1 p_4}\varepsilon_{ p_2 p_4}}^c \sqrt{\varepsilon_{ p_1 p_2}\varepsilon_{ p_2 p_4}}^d
			\sqrt{\varepsilon_{ p_1 p_4}\varepsilon_{ p_3 p_4}}^c  
			\sqrt{\varepsilon_{ p_1 p_4}\varepsilon_{\k}}^c . $$

			\noindent\ding{224}  Let us   apply    the norm map $N_{\KK^+/k_4}=1+\tau_1$ with $k_4= \QQ(\sqrt{p_2p_4}, \sqrt{p_3p_4}) $.
			\begin{eqnarray*}
				N_{\KK^+/k_4}(\xi^2)&=&
				1 \cdot\varepsilon_{p_2p_3}^{2b}\cdot    \varepsilon_{p_2p_4} ^c \cdot    (-\varepsilon_{p_2p_4}) ^d \cdot    \varepsilon_{p_3p_4} ^c\cdot1 
			\end{eqnarray*}
			Thus  $d=0$. It follows that  
			$$\xi^2= \varepsilon_{ p_1 p_4}^a \varepsilon_{ p_2 p_3}^b \sqrt{\varepsilon_{ p_1 p_4}\varepsilon_{ p_2 p_4}}^c  
			\sqrt{\varepsilon_{ p_1 p_4}\varepsilon_{ p_3 p_4}}^c  
			\sqrt{\varepsilon_{ p_1 p_4}\varepsilon_{\k}}^c . $$

			Therefore, we eliminated all equations except the following, for which we have to study their solvability:
			\begin{enumerate}[$a)$]
				\item   $\xi^2=      \sqrt{\varepsilon_{ p_1 p_4}\varepsilon_{ p_2 p_4}}   
				\sqrt{\varepsilon_{ p_1 p_4}\varepsilon_{ p_3 p_4}}   
				\sqrt{\varepsilon_{ p_1 p_4}\varepsilon_{\k}}   , $

				\item $\xi^2=    \varepsilon_{ p_1 p_4}  \sqrt{\varepsilon_{ p_1 p_4}\varepsilon_{ p_2 p_4}}   
				\sqrt{\varepsilon_{ p_1 p_4}\varepsilon_{ p_3 p_4}}   
				\sqrt{\varepsilon_{ p_1 p_4}\varepsilon_{\k}} , $

				\item $\xi^2=       \varepsilon_{ p_2 p_3} \sqrt{\varepsilon_{ p_1 p_4}\varepsilon_{ p_2 p_4}}   
				\sqrt{\varepsilon_{ p_1 p_4}\varepsilon_{ p_3 p_4}}   
				\sqrt{\varepsilon_{ p_1 p_4}\varepsilon_{\k}} , $

				\item $\xi^2=    \varepsilon_{ p_1 p_4}  \varepsilon_{ p_2 p_3}    $,
				
				\item $\xi^2= \varepsilon_{ p_1 p_4}^a \varepsilon_{ p_2 p_3}^b   , $ with $a\not=b$.
			\end{enumerate}
			
			Notice that $\varepsilon_{p_1p_1}  \varepsilon_{p_2p_3}$ is a square in $\KK^+$ whereas  $\varepsilon_{2p_1}$ and $  \varepsilon_{p_2p_3}$ are not (this means that $d)$ is solvable in $\KK^+$ whereas $e)$ is not). 
			
			On the other hand,  as  $h_2(p_ip_j)=1$ (cf. \cite[Corollary 3.8]{connor88}) and $h_2(p_1p_2p_3p_4)=4$ (cf. \cite{BenjaminSnyderPrerint2026ranks}), the class number formula (cf. Lemma \ref{wada's f.})  gives $h_2(\KK^+)=\dfrac{1}{2^7}q(\KK^+)$. Since $h_2(\KK^+)=h_2(\k^{1})=1$ (cf. \cite[Theorem 2]{BenLemSnyderJNT1998}), this implies that   $q(\KK^+)=2^7$.
			But if non of the equations $a)$, $b)$ and $c)$ is solvable in $\KK^+$, then   according to Wada's method and the above investigations, the unit group of $\KK^+$ is
			$ \langle-1,       \varepsilon_{ p_1 p_4} ,   \sqrt{\varepsilon_{ p_1 p_4}\varepsilon_{ p_2 p_4}}, \sqrt{\varepsilon_{ p_1 p_2}\varepsilon_{ p_2 p_4}},
			\sqrt{\varepsilon_{ p_1 p_4}\varepsilon_{ p_3 p_4}}, \sqrt{\varepsilon_{ p_1 p_3}\varepsilon_{ p_3 p_4}},   \sqrt{\varepsilon_{ p_1p_4}  \varepsilon_{p_2p_3}},
			\sqrt{\varepsilon_{ p_1 p_4}\varepsilon_{\k}}  \rangle 
			$. So   $q(\KK^+)=2^6$ which  is a contradiction. Therefore, $\eta \sqrt{\varepsilon_{ p_1 p_4}\varepsilon_{ p_2 p_4}}   
			\sqrt{\varepsilon_{ p_1 p_4}\varepsilon_{ p_3 p_4}}   
			\sqrt{\varepsilon_{ p_1 p_4}\varepsilon_{\k}} $ is a square in $\KK^+$ for some  $\eta \in  \{1,\varepsilon_{ p_1p_4},  \varepsilon_{p_2p_3} \}$.
			Hence, we have the result in the first item.

			\item Keep the same notations as in the above proof of the  first item. We shall use Lemma \ref{Lemmeazizi2}.
			As a fundamental system of units of $\KK^+$ is given by
			$$\{\varepsilon_{ p_1 p_4} , \sqrt{\varepsilon_{ p_1 p_4}\varepsilon_{ p_2 p_4}}, \sqrt{\varepsilon_{ p_1 p_2}\varepsilon_{ p_2 p_4}},
			\sqrt{\varepsilon_{ p_1 p_4}\varepsilon_{ p_3 p_4}}, \sqrt{\varepsilon_{ p_1 p_3}\varepsilon_{ p_3 p_4}},
			\sqrt{\varepsilon_{ p_1p_4}  \varepsilon_{p_2p_3}}, 
			\sqrt[4]{\eta^2   \varepsilon_{ p_1 p_4}^3 \varepsilon_{ p_2 p_4}    
				\varepsilon_{ p_3 p_4}    
				\varepsilon_{\k}}  \},$$
			we consider
			$$\chi^2= \ell\varepsilon_{ p_1 p_4}^a \sqrt{\varepsilon_{ p_1 p_4}\varepsilon_{ p_2 p_4}}^b \sqrt{\varepsilon_{ p_1 p_2}\varepsilon_{ p_2 p_4}}^c
			\sqrt{\varepsilon_{ p_1 p_4}\varepsilon_{ p_3 p_4}}^d \sqrt{\varepsilon_{ p_1 p_3}\varepsilon_{ p_3 p_4}}^e
			\sqrt{\varepsilon_{ p_1p_4}  \varepsilon_{p_2p_3}}^f 
			\sqrt[4]{\eta^2   \varepsilon_{ p_1 p_4}^3 \varepsilon_{ p_2 p_4}    
				\varepsilon_{ p_3 p_4}    
				\varepsilon_{\k}}^g, $$ 
			where $a, b, c, d, e, f$ and $g$ are in $\{0, 1\}$ and let  $\ell$ be a positive square-free integer that is not divisible by $p_ip_j$ for $i \not= j$ and $i$, $j\in \{1, 2,3, 4\}$.
			Put $Y=\sqrt[4]{\eta^2 \varepsilon_{ 2p_1}^3 {\varepsilon_{ 2p_2}   \varepsilon_{ 2p_3}   \varepsilon_{\k}} }$. We have  
			$Y^2= \eta \sqrt{    \varepsilon_{ p_1 p_4}^3 \varepsilon_{ p_2 p_4}    
				\varepsilon_{ p_3 p_4}    
				\varepsilon_{\k}}=\eta \sqrt{\varepsilon_{ p_1 p_4}\varepsilon_{ p_2 p_4}}   
			\sqrt{\varepsilon_{ p_1 p_4}\varepsilon_{ p_3 p_4}}   
			\sqrt{\varepsilon_{ p_1 p_4}\varepsilon_{\k}}  .$ By using Table \ref{tabl3}, we deduce that
			$$( Y^2)^{1+\tau_2}=  \eta'\varepsilon_{p_1p_4}^3\varepsilon_{p_3p_4} ;$$
			here $\eta'\in\{1,\varepsilon_{p_1p_4}^2\}$.

			\noindent\ding{224}  Let us start	by applying   the norm map $N_{\KK^+/k_2}=1+\tau_2$.  We have:
			\begin{eqnarray*}
				N_{\KK/k_2}(\xi^2)&=&\ell^2
				\varepsilon_{p_1p_4}^{2a}   \cdot \varepsilon_{p_1p_4}^b\cdot 1\cdot(\varepsilon_{p_1p_4}\varepsilon_{p_3p_4})^d\cdot(\varepsilon_{p_1p_3}\varepsilon_{p_3p_4})^e \cdot   (-\varepsilon_{p_1p_4})^f\cdot \sqrt{\eta'\varepsilon_{p_1p_4}^3\varepsilon_{p_3p_4}}^g\\
				&=&\ell^2	\varepsilon_{p_1p_4}^{2a}(\varepsilon_{p_1p_4}\varepsilon_{p_3p_4})^d  (\varepsilon_{p_1p_3}\varepsilon_{p_3p_4})^e ( -1)^f\varepsilon_{p_1p_4}^{b+f+g}\sqrt{\eta'\varepsilon_{p_1p_4}\varepsilon_{p_3p_4}}^g.
			\end{eqnarray*} 
			So $f=0$. As $E_{k_2}=\langle -1,  \varepsilon_{ p_1 p_4} , \sqrt{\varepsilon_{ p_1 p_4}\varepsilon_{ p_3 p_4}}, \sqrt{\varepsilon_{ p_1 p_3}\varepsilon_{ p_3 p_4}}\rangle$, we have $q(k_2)=4$. So $g=0$, since otherwise we get $q(k_2)\geq 8$. Therefore, 
			$$\chi^2= \ell\varepsilon_{ p_1 p_4}^a \sqrt{\varepsilon_{ p_1 p_4}\varepsilon_{ p_2 p_4}}^b \sqrt{\varepsilon_{ p_1 p_2}\varepsilon_{ p_2 p_4}}^c
			\sqrt{\varepsilon_{ p_1 p_4}\varepsilon_{ p_3 p_4}}^d \sqrt{\varepsilon_{ p_1 p_3}\varepsilon_{ p_3 p_4}}^e . $$

			\noindent\ding{224}  Let us   apply    the norm map $N_{\KK^+/k_1}=1+\tau_3$.  We have;
			\begin{eqnarray*}
				N_{\KK^+/k_1}(\xi^2)&=&\ell^2
				\varepsilon_{p_1p_4}^{2a}   \cdot (\varepsilon_{p_1p_4}\varepsilon_{p_2p_4})^b\cdot (\varepsilon_{p_1p_2}\varepsilon_{p_2p_4})^c\cdot(\varepsilon_{p_1p_4})^d\cdot(-1)^e  \\
				&=&\ell^2	\varepsilon_{p_1p_4}^{2a}(\varepsilon_{p_1p_4}\varepsilon_{p_2p_4})^b  (\varepsilon_{p_1p_2}\varepsilon_{p_2p_4})^c \cdot(-1)^{e}  \varepsilon_{p_1p_4}^{d}.
			\end{eqnarray*}
			So $d=e=0$.  Therefore, 
			$$\chi^2= \ell\varepsilon_{ p_1 p_4}^a \sqrt{\varepsilon_{ p_1 p_4}\varepsilon_{ p_2 p_4}}^b \sqrt{\varepsilon_{ p_1 p_2}\varepsilon_{ p_2 p_4}}^c  . $$

			\noindent\ding{224}  By  applying    the norm map $N_{\KK^+/k_4}=1+\tau_1$ with $k_4= \QQ(\sqrt{p_2p_4}, \sqrt{p_3p_4}) $, we get:
			\begin{eqnarray*}
				N_{\KK^+/k_4}(\xi^2)&=&\ell^2\cdot
				1 \cdot \varepsilon_{ p_2 p_4}^b  \cdot    (-\varepsilon_{p_2p_4}) ^c\\
				&=&\ell^2  (-1)^c\varepsilon_{ p_2 p_4}^{b+c}
			\end{eqnarray*}
			So $b=c=0$. Therefore, $\chi^2= \ell\varepsilon_{ p_1 p_4}^a  $. By taking $\ell=p_1$, we get $p_1\varepsilon_{ p_1 p_4}$ is a square in $\KK^+$. So the result follows by Lemma \ref{Lemmeazizi2}.
		\end{enumerate}
		
	\end{proof}
	
		The following corollary is a result of the second item of the     previous  proof and Lemmas \ref{Lemmeazizi2} and \ref{Lemme azizi}. 

	\begin{corollary}\label{corllaryofvaluesofnormsEspijEpsrs}Keep the same hypothesis of  Lemma \ref{lemma1}.
		Let $\ell$ be a positive square-free integer that is not divisible by $p_ip_j$ for $i \not= j$ and $i$, $j\in \{1, 2,3, 4\}$,   and
		$\KK_\ell =\KK^+(\sqrt{-\ell})$.	 By the  proof of the second item,  the set of seven elements $$     \{ \sqrt{\varepsilon_{ p_1 p_4}\varepsilon_{ p_2 p_4}}, \sqrt{\varepsilon_{ p_1 p_2}\varepsilon_{ p_2 p_4}},
		\sqrt{\varepsilon_{ p_1 p_4}\varepsilon_{ p_3 p_4}}, \sqrt{\varepsilon_{ p_1 p_3}\varepsilon_{ p_3 p_4}},
		\sqrt{\varepsilon_{ p_1p_4}  \varepsilon_{p_2p_3}},\\
		\sqrt[4]{\eta^2   \varepsilon_{ p_1 p_4}^3 \varepsilon_{ p_2 p_4}    
			\varepsilon_{ p_3 p_4}    
			\varepsilon_{\k}},\varepsilon \} $$
		is a fundamental system of units of $\KK_\ell$. Here $\varepsilon =\sqrt{-\varepsilon_{ p_1 p_4}} \text{ or  } \varepsilon_{ p_1 p_4}$   according to whether (respectively) $\ell \in\{ p_1,  p_2 ,  p_3,p_4 \}$ or not.
	\end{corollary}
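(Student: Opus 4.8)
The plan is to re-run, essentially verbatim, the reduction carried out in the proof of the second item of Lemma~\ref{lemma1}, the only genuinely new ingredient being the final solvability analysis, which must now be performed for an arbitrary admissible $\ell$ rather than only for $\ell=p_1$. Since $\KK_\ell=\KK^+(\sqrt{-\ell})$ with $\ell$ a positive square-free rational integer, I first check that the hypotheses of Lemma~\ref{Lemmeazizi2} hold: $\KK^+$ is totally real, $\KK_\ell/\QQ$ is abelian because it is multiquadratic, and $i\notin\KK_\ell$. The last point follows because $i\in\KK^+(\sqrt{-\ell})$ would force $u^2-\ell v^2=-1$ and $uv=0$ for some $u,v\in\KK^+$, hence $\ell$ to be a square in the totally real field $\KK^+$; we will see below that this never occurs for admissible $\ell\neq1$. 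It then remains to decide whether some unit $\varepsilon\in E_{\KK^+}$ makes $\ell\varepsilon$ a square in $\KK^+$, and to read off the fundamental system from Lemma~\ref{Lemmeazizi2}.

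Starting from the fundamental system of units of $\KK^+$ established in Lemma~\ref{lemma1}(1), I would write a putative relation $\chi^2=\ell\,\varepsilon_{p_1p_4}^{a}\,\sqrt{\varepsilon_{p_1p_4}\varepsilon_{p_2p_4}}^{\,b}\cdots\sqrt[4]{\eta^2\varepsilon_{p_1p_4}^3\varepsilon_{p_2p_4}\varepsilon_{p_3p_4}\varepsilon_{\k}}^{\,g}$ with all exponents in $\{0,1\}$. Because $\ell\in\QQ$ is fixed by $\Gal(\KK^+/\QQ)$, each relative norm $N_{\KK^+/k}=1+\tau$ contributes only the perfect square $\ell^2$; hence the three successive norm maps $N_{\KK^+/k_2}=1+\tau_2$, $N_{\KK^+/k_1}=1+\tau_3$ and $N_{\KK^+/k_4}=1+\tau_1$ force exactly the same vanishing of exponents as in the $\ell=p_1$ case, namely $b=c=d=e=f=g=0$. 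So the only surviving relations are $\chi^2=\ell\,\varepsilon_{p_1p_4}^{a}$ with $a\in\{0,1\}$, and the only candidate special units are $1$ and $\varepsilon_{p_1p_4}$.

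Next I would settle solvability by a square-class computation in $\QQ^\times/(\QQ^\times)^2$. A square-free positive integer $m$ is a square in $\KK^+=\QQ(\sqrt{p_1p_4},\sqrt{p_2p_4},\sqrt{p_3p_4})$ precisely when it lies in the order-$8$ subgroup generated by $p_1p_4,p_2p_4,p_3p_4$, whose nontrivial classes are $p_1p_4,p_2p_4,p_3p_4,p_1p_2,p_1p_3,p_2p_3,p_1p_2p_3p_4$. For $a=0$ this would require $\ell=1$; since every nontrivial member above is divisible by some $p_ip_j$, the admissibility hypothesis on $\ell$ leaves only $\ell=1$, which is excluded. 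For $a=1$ I would invoke the identity already proved in Lemma~\ref{lemma1}(2) that $p_1\varepsilon_{p_1p_4}$ is a square in $\KK^+$; then $\ell\varepsilon_{p_1p_4}$ is a square if and only if $\ell p_1$ is a square in $\KK^+$, i.e. iff $\ell p_1$ equals one of the eight classes. Running through them and imposing that $\ell$ be square-free and divisible by no $p_ip_j$ singles out precisely $\ell\in\{p_1,p_2,p_3,p_4\}$.

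Finally I would feed this into the Azizi Lemmas~\ref{Lemmeazizi2} and~\ref{Lemme azizi}: when $\ell\in\{p_1,p_2,p_3,p_4\}$ the special unit $\varepsilon=\varepsilon_{p_1p_4}$ exists, so item~1 of Lemma~\ref{Lemmeazizi2} replaces the generator $\varepsilon_{p_1p_4}$ of $E_{\KK^+}$ by $\sqrt{-\varepsilon_{p_1p_4}}$ while retaining the other six generators, whereas for $\ell\notin\{p_1,p_2,p_3,p_4\}$ no special unit exists and item~2 keeps $\varepsilon_{p_1p_4}$; Lemma~\ref{Lemme azizi} handles the accompanying root-of-unity bookkeeping. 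I expect the main obstacle to be the square-class analysis in the $a=1$ case, namely confirming that the norm-map reduction is genuinely exhaustive and that the coprimality hypothesis on $\ell$ rules out every class other than $p_1,p_2,p_3,p_4$; the subsequent appeal to Azizi's lemmas is then routine.
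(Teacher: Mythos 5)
Your proposal is correct and follows essentially the same route as the paper: the paper's "proof" of this corollary is precisely the norm-map reduction already carried out for a general admissible $\ell$ in the proof of the second item of Lemma \ref{lemma1}, which leaves only $\chi^2=\ell\varepsilon_{p_1p_4}^a$, followed by an appeal to Lemma \ref{Lemmeazizi2}. You merely make explicit the square-class analysis (deciding for which admissible $\ell$ the relation $\ell\varepsilon_{p_1p_4}^a$ is a square in $\KK^+$, using that $p_1\varepsilon_{p_1p_4}$ is a square and that the rational square classes of $\KK^+$ are generated by $p_1p_4,p_2p_4,p_3p_4$) that the paper leaves implicit, and this analysis is carried out correctly.
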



	\subsection{\bf  Units of $\k^1$ and $\k_+^{1}$ when $d$ Satisfies the Conditions $(c_3)$ and $d_4=-8$} \label{subsec2} $\;$\\

	Now we shall compute the unit groups of the fields  $\k^1$ and $\k^1_+$  with $\k=\QQ(\sqrt{ p_1p_2p_3p_4})$ and $p_4=2$ (i.e. $d_4=-8$).
	  Let us start by proving the following useful lemma.
	Let $\tau_i$, for $i=1$, $2$, $3$, be as in Subsection \ref{subsec1}.
	\bigskip

	\begin{lemma} \label{lemmaa} Let $p_4=2$  and   $p_1 \equiv p_2\equiv p_3 \equiv 3\pmod 4$ be   distinct prime numbers such that
		$$ \left(\dfrac{-p_1}{p_2}\right)=\left(\dfrac{-p_2}{p_3}\right)=\left(\dfrac{-p_3}{p_1}\right)=-1 \text{ and } \left(\dfrac{-p_1}{p_4}\right)=\left(\dfrac{-p_2}{p_4}\right)=\left(\dfrac{-p_3}{p_4}\right)=1.$$
		We have
		\begin{eqnarray}\label{sqrtEpsk2}
			\sqrt{\varepsilon_{\k}}=	\sqrt{\varepsilon_{2p_1p_2p_3}}=\frac12(\alpha\sqrt{2}+ 2\beta\sqrt{p_1 p_2p_3}) \text{ and } 2=\alpha^2-\beta^2 2p_1 p_2p_3.
		\end{eqnarray}
		Here  $\alpha$ and $\beta$ are   integers.
	\end{lemma}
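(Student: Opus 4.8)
The plan is to reduce the claim to a descent on the Pell relation satisfied by $\varepsilon_\k$ and then to pin down a single factorization type using the hypotheses on the quadratic residue symbols. Write $m = p_1p_2p_3$, so that $\k = \QQ(\sqrt{2m})$ and, since $2m \equiv 2 \pmod 4$, the ring of integers is $\ZZ[\sqrt{2m}]$ and the fundamental unit has the shape $\varepsilon_\k = X + Y\sqrt{2m}$ with $X,Y \in \ZZ$. First I would record that $N(\varepsilon_\k) = +1$: the prime $p_1 \equiv 3 \pmod 4$ divides $2m$, so the negative Pell equation $X^2 - 2mY^2 = -1$ is unsolvable, forcing $N(\varepsilon_\k) = +1$ and hence $X^2 - 2mY^2 = 1$. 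The engine of the proof is then the pair of identities $(\varepsilon_\k \pm 1)^2 = 2(X\pm 1)\varepsilon_\k$, which give $\sqrt{\varepsilon_\k} = (\varepsilon_\k + 1)/\sqrt{2(X+1)} = (\varepsilon_\k - 1)/\sqrt{2(X-1)}$. Thus proving the lemma amounts to showing that the squarefree parts of $2(X+1)$ and $2(X-1)$ are $2$ and $m$ respectively, equivalently $X + 1 = \alpha^2$ and $X - 1 = 2m\beta^2$ for integers $\alpha,\beta$.

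Granting that factorization, the rest is a short formal computation I would carry out at the end. Subtracting gives the asserted relation $\alpha^2 - 2m\beta^2 = (X+1) - (X-1) = 2$; comparing $(X-1)(X+1) = 2mY^2$ with $X+1 = \alpha^2$, $X-1 = 2m\beta^2$ yields $Y = \alpha\beta$; and substituting into $\sqrt{\varepsilon_\k} = (\varepsilon_\k - 1)/\sqrt{2(X-1)}$ collapses to $\beta\sqrt m + \tfrac{\alpha}{2}\sqrt 2 = \tfrac12(\alpha\sqrt 2 + 2\beta\sqrt m)$, which is exactly the stated form together with its norm relation.

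To set up the descent I would note that $X$ is odd (as $2mY^2$ is even), so $X-1$ and $X+1$ are both even with $\gcd(X-1,X+1) = 2$; then $(X-1)(X+1) = 2mY^2$ forces $Y$ even, and after removing the common factor $2$ one obtains a coprime factorization in which each of the primes $2,p_1,p_2,p_3$ (to the first power, up to squares) attaches to exactly one of $X\pm 1$. The target factorization asserts that all three odd primes attach to $X-1$ while $X+1$ carries only even powers, i.e. $X \equiv 3 \pmod 4$ and $X+1$ is a perfect square.

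The hard part will be excluding every other partition of $\{2,p_1,p_2,p_3\}$ between $X-1$ and $X+1$. Here I would invoke Lemma \ref{lem2} to discard the degenerate cases where $2(X+1)$ or $2d(X+1)$ (with $d = 2m$) is itself a square, and then use the arithmetic hypotheses to rule out the mixed partitions. Since each $p_i \equiv 3 \pmod 4$ we have $(-1/p_i) = -1$, so the conditions $(-p_1/p_2) = (-p_2/p_3) = (-p_3/p_1) = -1$ translate into $(p_1/p_2) = (p_2/p_3) = (p_3/p_1) = 1$, while $(-p_i/p_4) = 1$ with $p_4 = 2$ forces $p_i \equiv 7 \pmod 8$ and hence $(2/p_i) = 1$. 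Reducing a hypothetical factorization $X+1 = 2^a\prod_{i\in S}p_i\cdot r^2$, $X-1 = 2\prod_{j\notin S}p_j\cdot s^2$ (for some $S \subseteq \{1,2,3\}$) modulo each $p_j$, using $X \equiv \mp 1$ according as $p_j$ divides $X\pm 1$, and reading off the resulting Legendre symbols produces a system of sign constraints on $S$; the displayed residue conditions make this system inconsistent unless $S = \varnothing$, and a parallel $2$-adic comparison fixes $X \equiv 3 \pmod 4$. This compatibility analysis is exactly analogous to the argument behind \eqref{sqrtEpsk} and to \cite[Lemma 7]{BenSnyder25PartII}, and it is where essentially all the work lies.
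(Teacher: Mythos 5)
Your proposal follows essentially the same route as the paper's proof: establish $N(\varepsilon_\k)=1$, factor $a^2-1=2p_1p_2p_3b^2$, show that the only admissible splitting is $a+1=\alpha^2$, $a-1=2p_1p_2p_3\beta^2$, and then read off $\sqrt{\varepsilon_\k}$ and the norm relation. Your framing via $(\varepsilon_\k\pm1)^2=2(X\pm1)\varepsilon_\k$ is only cosmetically different from the paper's ``sum and subtract the two equations,'' and your concluding algebra is correct. Your translation of the hypotheses into $p_i\equiv 7\pmod 8$ and $\left(\frac{p_1}{p_2}\right)=\left(\frac{p_2}{p_3}\right)=\left(\frac{p_3}{p_1}\right)=1$ is also exactly what the paper uses.

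Two caveats. First, the decisive step --- showing the sign constraints coming from reducing modulo each $p_j$ are inconsistent for every mixed partition --- is only asserted, not carried out; this is where the paper does all its work (its Systems $(2)$ and $(3)$, eliminated by producing an index $s$ with $\left(\frac{p_jp_k}{p_i}\right)=\left(\frac{p_i}{p_s}\right)$ while the two sides are forced to have opposite signs). The method you describe does succeed, but as written it is a promissory note. Second, and more substantively, your claim that ``a parallel $2$-adic comparison fixes $X\equiv 3\pmod 4$'' is wrong: the mirror case $X-1=r^2$, $X+1=2p_1p_2p_3s^2$ gives $X\equiv 1\pmod 4$ and is perfectly consistent $2$-adically (one of $X\pm 1$ has $2$-valuation $1$ and the other an even valuation, and either assignment is possible a priori). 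That case must be excluded arithmetically, exactly as the paper does for its System $(1)$: if $X-1$ were a square then $X\equiv-1\pmod{p_i}$ forces $r^2\equiv-2\pmod{p_i}$, contradicting $\left(\frac{-2}{p_i}\right)=\left(\frac{-1}{p_i}\right)\left(\frac{2}{p_i}\right)=-1$. Your parametrization, which hard-codes the factor $2$ and the complementary primes into $X-1$, silently omits this mirror family, so you should enlarge the case list (or argue by symmetry) and apply the same Legendre-symbol computation there; with that repair the plan matches the paper's proof.
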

	\begin{proof}
		Notice that under our conditions we have $p_i\equiv 7\pmod8$. Put $\{1,2,3\}=\{i,j,k\}$ and $\varepsilon_{2p_1 p_2p_3}=a+b\sqrt{2p_1 p_2p_3}$ with $a$ and $b$ are integers. As $ N(\varepsilon_{2p_1 p_2p_3})=1 $, then by the unique factorization  of $ a^{2}-1=2p_1 p_2p_3b^{2} $ in $ \mathbb{Z} $, and Lemma \ref{lem2}, there exist $b_1$ and $b_2$ in $\mathbb{Z}$ such that  we have exactly one of the following systems:
		$$(1):\ \left\{ \begin{array}{ll}
			a\pm1=b_1^2\\
			a\mp1=2p_1 p_2p_3b_2^2,
		\end{array}\right.  \quad
		(2):\ \left\{ \begin{array}{ll}
			a\pm1=2p_ib_1^2\\
			a\mp1=p_jp_kb_2^2,
		\end{array}\right. \quad
		(3):\ \left\{ \begin{array}{ll}
			a\pm1=p_ib_1^2\\
			a\mp1=2p_jp_kb_2^2,
		\end{array}\right. 
		$$
		Here $b_1$ and $b_2$ are   integers such that $b=b_1b_2$. 
		\begin{enumerate}[\rm$\bullet$]
			\item  Assume that we are in the case of System $(1)$. We have:	
			\[1=\left(\dfrac{b_1^2}{p_i}\right)=\left(\dfrac{a\pm1}{p_i}\right)=\left(\dfrac{a\mp1\pm2}{p_i}\right)=\left(\dfrac{2p_1 p_2p_3b_2^2\pm2}{p_i}\right)=\left(\dfrac{\pm2}{p_i}\right)=\left(\dfrac{\pm1}{p_i}\right),
			\]
			So the case $a-1=b_1^2$ is impossible.
			
			\item  Assume that we are in the case of System $(2)$. For $s\in \{j,k\}$, we have:
			\[\left(\dfrac{ p_i }{p_s}\right)=\left(\dfrac{2p_ib_1^2}{p_s}\right)=\left(\dfrac{a\pm1}{p_s}\right)=\left(\dfrac{a\mp1\pm2}{p_s}\right)=\left(\dfrac{2 p_jp_kb_2^2\pm2}{p_s}\right)=\left(\dfrac{\pm2}{p_s}\right)=\left(\dfrac{\pm1}{p_s}\right),
			\]
			and similarly 
			\[\left(\dfrac{ p_jp_k }{p_i}\right)= \left(\dfrac{\mp1}{p_i}\right),
			\]
			but we may choose $s$ such that $\left(\dfrac{ p_jp_k }{p_i}\right)=\left(\dfrac{ p_i }{p_s}\right)$ (one can deduce this from  Table \ref{conditionsongammaij} by disregarding the fourth and the last columns),	which gives a contradiction. In fact, 
			$\left(\dfrac{\mp1}{p_i}\right)\not= \left(\dfrac{\pm1}{p_s}\right) $. 
			
			We similarly eliminate System $(3)$. Hence, we have 
			$$\left\{ \begin{array}{ll}
				a+1=b_1^2\\
				a-1=2p_1 p_2p_3b_2^2,
			\end{array}\right.$$
			By summing and subtracting these equations, we get respectively  $2\varepsilon_2=2a+2b\sqrt{2p_1 p_2p_3}=b_1^2+2p_1 p_2p_3b_2^2+2b_1b_2\sqrt{2p_1 p_2p_3}=(b_1+b_2\sqrt{2p_1 p_2p_3})^2$
			and $2=b_1^2-2p_1 p_2p_3b_2^2$. So $\sqrt{2\varepsilon_k}=b_1+b_2\sqrt{2p_1 p_2p_3}$. This gives the result by taking $\alpha =b_1$ and $\beta =b_2$.
		\end{enumerate}	 
	\end{proof}

	\begin{remark}
		We note that one can use the techniques in the proof of  \cite[Lemma 7]{BenSnyder25PartII} to check that the integer $\alpha$ in the expression of $\sqrt{\varepsilon_\k}$ given by Lemma \ref{lemmaa} is even.
		
	\end{remark}
	
	
	
	\begin{lemma} \label{lemma2}Let $p_4=2$ and $p_1 \equiv p_2\equiv p_3 \equiv 3\pmod 4$  be   distinct prime numbers such that
		$$ \left(\dfrac{-p_1}{p_2}\right)=\left(\dfrac{-p_2}{p_3}\right)=\left(\dfrac{-p_3}{p_1}\right)=-1 \text{ and } \left(\dfrac{-p_1}{p_4}\right)=\left(\dfrac{-p_2}{p_4}\right)=\left(\dfrac{-p_3}{p_4}\right)=1.$$
		Put $\KK^+=\QQ(\sqrt{2p_1}, \sqrt{2p_2}, \sqrt{2p_3})$ and $\KK= \KK^+(\sqrt{-p_1})$.	Then, we have:
		\begin{enumerate}[\rm $1)$]
			\item The unit group of $\KK^+$ is :
			\begin{eqnarray*}
				E_{\KK^+}=\langle-1,     \varepsilon_{2p_1} ,\sqrt{\varepsilon_{2p_1}\varepsilon_{p_2p_3}}, \sqrt{\varepsilon_{ 2p_1}\varepsilon_{ 2p_2}}, \sqrt{\varepsilon_{ 2p_1}\varepsilon_{ p_1p_2}},
				\sqrt{\varepsilon_{ 2p_1}\varepsilon_{ 2p_3}}, \sqrt{\varepsilon_{ 2p_1}\varepsilon_{ p_1p_3}},\\
				\sqrt[4]{\eta^2 \varepsilon_{ 2p_1}^3 {\varepsilon_{ 2p_2}   
						\varepsilon_{ 2p_3}   \varepsilon_{\k}} }\rangle.
			\end{eqnarray*}

			\item The unit group of $\KK$ is :
			\begin{eqnarray*}E_{\KK}=\langle-1,      \sqrt{\varepsilon_{2p_1}\varepsilon_{p_2p_3}}, \sqrt{\varepsilon_{ 2p_1}\varepsilon_{ 2p_2}}, \sqrt{\varepsilon_{ 2p_1}\varepsilon_{ p_1p_2}},
				\sqrt{\varepsilon_{ 2p_1}\varepsilon_{ 2p_3}}, \sqrt{\varepsilon_{ 2p_1}\varepsilon_{ p_1p_3}},\\
				\sqrt[4]{\eta^2 \varepsilon_{ 2p_1}^3 {\varepsilon_{ 2p_2}   
						\varepsilon_{ 2p_3}   \varepsilon_{\k}} }, \sqrt{-\varepsilon_{2p_1}}\rangle.\end{eqnarray*}
		\end{enumerate}
		Here $\eta \in  \{1,\varepsilon_{2p_1},  \varepsilon_{p_2p_3} \}$.	
	\end{lemma}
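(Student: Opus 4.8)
The plan is to follow the same Wada's-method strategy used for Lemma \ref{lemma1}, now adapted to the ramified prime $p_4 = 2$ (i.e. $d_4 = -8$). For the first item, I would set up the three quadratic-intermediate fields $k_1 = \QQ(\sqrt{2p_1}, \sqrt{2p_2})$, $k_2 = \QQ(\sqrt{2p_1}, \sqrt{2p_3})$, and $k_3 = \QQ(\sqrt{2p_1}, \sqrt{p_2p_3})$ fixed respectively by $\langle\tau_3\rangle$, $\langle\tau_2\rangle$ and $\langle\tau_2\tau_3\rangle$, exactly as before but with $q = p_4 = 2$ in the automorphisms $\tau_1, \tau_2, \tau_3$ from Subsection \ref{subsec1}. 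The key inputs are the radical expressions from Lemma \ref{BenjSnyLemma}: here I would use part $2)$, $\sqrt{\varepsilon_{2p_i}} = a\sqrt{2} + b\sqrt{p_i}$ with $a^2\cdot 2 - b^2 p_i = (2/p_i)$, together with the expression $\sqrt{\varepsilon_\k} = \frac12(\alpha\sqrt2 + 2\beta\sqrt{p_1p_2p_3})$ just established in Lemma \ref{lemmaa}. From these I would compute $E_{k_1}$, $E_{k_2}$, $E_{k_3}$ by Wada's method and form the product $E_{k_1}E_{k_2}E_{k_3}$.

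Next I would take an arbitrary $\xi \in \KK^+$ with $\xi^2$ equal to a product of the generators of $E_{k_1}E_{k_2}E_{k_3}$ with exponents in $\{0,1\}$, and successively apply the norm maps $N_{\KK^+/k_2} = 1+\tau_2$, $N_{\KK^+/k_1} = 1+\tau_3$, and $N_{\KK^+/k_4} = 1+\tau_1$ (with $k_4 = \QQ(\sqrt{2p_2},\sqrt{2p_3})$) to kill off exponents one at a time. This requires the analogue of Lemma \ref{valuesofnormsEspijEpsrs} and Table \ref{reftab} for the present mixed-radical situation, i.e. computing $\varepsilon^{1+\tau_i}$ for the relevant products of fundamental units; I would compute these using the sign-tracking trick illustrated in the proof of Lemma \ref{valuesofnormsEspijEpsrs} (tracking how $\sqrt{2p_ip_j}$, $\sqrt{p_ip_j}$ transform under each $\tau$). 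The splitting values $\gamma_{ij}$ from Table \ref{conditionsongammaij} (disregarding the column for $p_4$, as the Remark after Lemma \ref{lemmaa} suggests) feed in here. As in Lemma \ref{lemma1}, the norm computations should eliminate all but a short list of candidate equations, and the final one determines whether a fourth root such as $\sqrt[4]{\eta^2\varepsilon_{2p_1}^3\varepsilon_{2p_2}\varepsilon_{2p_3}\varepsilon_\k}$ lies in $\KK^+$.

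The decisive step, as in the prior lemma, is the class-number count forcing solvability. Using $h_2(2p_i) = h_2(p_ip_j) = 1$ (cf. \cite{connor88}), $h_2(\k) = h_2(2p_1p_2p_3) = 4$ (cf. \cite{BenjaminSnyderPrerint2026ranks}), and the Kuroda/Wada formula of Lemma \ref{wada's f.}, together with $h_2(\KK^+) = h_2(\k^1) = 1$ (cf. \cite{BenLemSnyderJNT1998}), I would obtain the forced unit index $q(\KK^+) = 2^7$. Arguing that if none of the three remaining candidate fourth-root equations were solvable then $q(\KK^+)$ could only reach $2^6$ yields a contradiction, so one of the $\eta \in \{1, \varepsilon_{2p_1}, \varepsilon_{p_2p_3}\}$ produces a square; this gives item $1)$. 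For item $2)$, I would invoke Lemma \ref{Lemmeazizi2} with $K_0 = \KK^+$ and $\beta$ chosen so that $\KK = \KK^+(\sqrt{-p_1})$, running the same norm-elimination on $\chi^2 = \ell\,(\cdots)$ as in the second item of Lemma \ref{lemma1} to show that only $\ell\varepsilon_{2p_1}^a$ survives, and then taking $\ell = p_1$ to confirm that $p_1\varepsilon_{2p_1}$ is a square, so that $\sqrt{-\varepsilon_{2p_1}}$ adjoins correctly.

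I expect the main obstacle to be the bookkeeping for the norm-map table in the present setting: because $p_4 = 2$ is now one of the ramified primes and the radical of $\varepsilon_\k$ in Lemma \ref{lemmaa} has the $\tfrac12(\alpha\sqrt2 + 2\beta\sqrt{p_1p_2p_3})$ shape rather than the shape used in Lemma \ref{lemma1}, the sign computations for $\varepsilon^{1+\tau_i}$ do not transfer verbatim and must be redone carefully. Verifying that $\alpha$ is even (noted in the Remark following Lemma \ref{lemmaa}) is likely needed to ensure the relevant quantities are genuine units and that the fourth-root generator is integral; keeping these parities consistent across all three norm reductions is where the real work — and the risk of sign errors — lies.
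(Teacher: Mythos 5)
Your proposal is correct and follows essentially the same route as the paper: Wada's method with the subfields $k_1=\QQ(\sqrt{2p_1},\sqrt{2p_2})$, $k_2=\QQ(\sqrt{2p_1},\sqrt{2p_3})$, $k_3=\QQ(\sqrt{2p_1},\sqrt{p_2p_3})$, the radical expressions from Lemma \ref{BenjSnyLemma} and Lemma \ref{lemmaa}, norm-map elimination of exponents, the count $q(\KK^+)=2^7$ forced by the class number formula to produce the fourth-root generator, and finally Lemma \ref{Lemmeazizi2} with $\ell=p_1$ for the imaginary extension. The only deviations are minor bookkeeping choices (the paper needs only $1+\tau_3$ and $1+\tau_1$ in item $1)$, and uses $1+\tau_1\tau_2$ with $k_5=\QQ(\sqrt{2p_3},\sqrt{p_1p_2})$ rather than $1+\tau_1$ in the last reduction of item $2)$), which do not change the argument.
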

	\begin{proof}We shall use the same technique as in the proof of Lemma \ref{lemma1}.
		\begin{enumerate}[\rm $1)$]
			\item Let	
			$k_1 = \QQ(\sqrt{2p_1}, \sqrt{2p_2}) ,$ 
			$k_2 = \QQ(\sqrt{2p_1}, \sqrt{2p_3})  $  and
			$k_3 =\QQ(\sqrt{2p_1}, \sqrt{p_2p_3})$.
			Recall that 
			\begin{eqnarray*}\label{sqrtEps}
				\sqrt{\varepsilon_{p_ip_j}}=	 v\sqrt{p_i}+ w\sqrt{p_j} \text{ and } \gamma_{ij}=p_iv^2-p_{j}w^2,  
			\end{eqnarray*}
			\begin{eqnarray*}\label{sqrtEps2} 
				\sqrt{\varepsilon_{2p_j}}=	 \alpha\sqrt{2}+ \beta\sqrt{p_j} \text{ and } (2/p_j)=2\alpha^2-p_{j}\beta^2,
			\end{eqnarray*}
			and,  by Lemma \ref{lemmaa}, we have
			\begin{eqnarray*}\label{sqrtEps2k} 
				\sqrt{\varepsilon_{\k}}=\frac12(x\sqrt{2}+ 2y\sqrt{p_1 p_2p_3}) \text{ and } 2=x^2-y^2 2p_1 p_2p_3,
			\end{eqnarray*}
			here $v$, $w$, $\alpha$, $\beta$ $x$ and $y$ are   integers or semi-integers.  
			Therefore, by Wada's method  we deduce that
			$$ 
			E_{k_1}=\langle -1,  \varepsilon_{2p_1} , \sqrt{\varepsilon_{ 2p_1}\varepsilon_{ 2p_2}}, \sqrt{\varepsilon_{ 2p_1}\varepsilon_{ p_1p_2}}\rangle, \quad  
			E_{k_2}=\langle -1,  \varepsilon_{2p_1} , \sqrt{\varepsilon_{ 2p_1}\varepsilon_{ 2p_3}}, \sqrt{\varepsilon_{ 2p_1}\varepsilon_{ p_1p_3}}\rangle $$
			$$\text{ and } E_{k_3}=\langle -1,  \varepsilon_{ 2p_1} , \varepsilon_{p_2p_3}, \sqrt{\varepsilon_{ 2p_1}\varepsilon_{\k}}\rangle.$$
			It follows that,  \begin{eqnarray*}\label{E1E2E3}
				E_{k_1}E_{k_2}E_{k_3}=\langle-1,     \varepsilon_{2p_1} ,\varepsilon_{p_2p_3}, \sqrt{\varepsilon_{ 2p_1}\varepsilon_{ 2p_2}}, \sqrt{\varepsilon_{ 2p_1}\varepsilon_{ p_1p_2}},
				\sqrt{\varepsilon_{ 2p_1}\varepsilon_{ 2p_3}}, \sqrt{\varepsilon_{ 2p_1}\varepsilon_{ p_1p_3}},
				\sqrt{\varepsilon_{ 2p_1}\varepsilon_{\k}}  \rangle.
			\end{eqnarray*}	 	
			Let  $\xi$ be an element of $\KK^+$ which is the  square root of an element of $E_{k_1}E_{k_2}E_{k_3}$. Therefore, we can assume that
			$$\xi^2= \varepsilon_{2p_1}^a  \varepsilon_{p_2p_3}^b \sqrt{\varepsilon_{ 2p_1}\varepsilon_{ 2p_2}}^c \sqrt{\varepsilon_{ 2p_1}\varepsilon_{ p_1p_2}}^d
			\sqrt{\varepsilon_{ 2p_1}\varepsilon_{ 2p_3}}^e \sqrt{\varepsilon_{ 2p_1}\varepsilon_{ p_1p_3}}^f
			\sqrt{\varepsilon_{ 2p_1}\varepsilon_{\k}}^g , $$
			
			where $a, b, c, d, e, f$ and $g$ are in $\{0, 1\}$. We have  
			$\sqrt{\varepsilon_{2p_1}\varepsilon_{\k}}=\frac12(\alpha\sqrt{2}+ \beta\sqrt{p_1})(x\sqrt{2}+ 2y\sqrt{p_1 p_2p_3}) $. So,
			\begin{eqnarray}\label{equal2}
				\sqrt{2p_1} \sqrt{\varepsilon_{2p_1}\varepsilon_{\k}}=\frac12(\alpha\sqrt{2p_1}+ \beta  {p_1})(2x + 2y\sqrt{2p_1 p_2p_3})
			\end{eqnarray}%
			Notice that $2\sqrt{2p_1 p_2p_3}=\sqrt{2p_1  }\sqrt{2  p_2 }\sqrt{2 p_3}$, so
			$$	\sqrt{2p_1 p_2p_3}^{\tau_i}= -\sqrt{2p_1 p_2p_3} \text{ for } i=1,2,3.$$
			Therefore, by the definition of $\tau_1$ and \eqref{equal2}, we have $$-\sqrt{{2p_1}}(\sqrt{\varepsilon_{2p_1}\varepsilon_{\k}})^{\tau_1}=(\sqrt{2p_1}\sqrt{ \varepsilon_{2p_1}\varepsilon_{\k}})^{\tau_1} =\frac12\sqrt{{2p_1}}(-\alpha\sqrt{2}+ \beta\sqrt{p_1})( x\sqrt{2} - 2y\sqrt{ p_1 p_2p_3}).$$
			Thus, $\sqrt{\varepsilon_{2p_1}\varepsilon_{\k}}^{\tau_1}=-\frac12(-\alpha\sqrt{2}+ \beta\sqrt{p_1})( x\sqrt{2} - 2y\sqrt{ p_1 p_2p_3})$ and so
			
			$(\sqrt{\varepsilon_{2p_1}\varepsilon_{\k}})^{1+\tau_1}=-\frac14(-\alpha^2 {2}+ \beta^2 {p_1})( 2x^2  - 4y^2 { p_1 p_2p_3})=1$.
			We proceed  similarly to  get   the following table (cf. Table \ref{tabl3}).
			

			\begin{table}[H]
				$$
				\begin{tabular}{|c|c|c|c|c|c|c|c|}
					\hline\rsp	$\varepsilon $ & $\varepsilon^{1+\tau_1} $ & $\varepsilon^{1+\tau_2}$ & $\varepsilon^{1+\tau_3}$ & $\varepsilon^{1+\tau_1\tau_2}$ & $\varepsilon^{1+\tau_1\tau_3}$ & $\varepsilon^{1+\tau_2\tau_3}$ & $\varepsilon^{1+\tau_1\tau_2\tau_3}$ \\ \hline
					
					\rsp $ \sqrt{\varepsilon_{2p_1}\varepsilon_{\k}}$ &  $1$ &  $\varepsilon_{2p_1}$ &  $\varepsilon_{2p_1}$ &  $\varepsilon_{\k}$ &  $\varepsilon_{\k}$ &  $\varepsilon_{2p_1}\varepsilon_{\k}$ &  $1$\\
					\hline
					
					\rsp $ \sqrt{\varepsilon_{2p_1}\varepsilon_{2p_2}}$ &  $\varepsilon_{2p_2} $ &  $ \varepsilon_{2p_1}$ &  $\varepsilon_{2p_1}\varepsilon_{2p_2}$ &  $1$ &  {$\varepsilon_{2p_2}$} &  {$\varepsilon_{2p_1}$} &  $1$\\
					\hline
					
					\rsp $ \sqrt{\varepsilon_{2p_1}\varepsilon_{p_1p_2}}$ &  ${-1}$ &  ${\varepsilon_{2p_1}}$ &  $\varepsilon_{2p_1}\varepsilon_{p_1p_2}$ &  $-\varepsilon_{p_1p_2}$ &  $-1$ &  $\varepsilon_{2p_1}$ &  $-\varepsilon_{p_1p_2}$\\
					\hline 	
					
					\rsp $ \sqrt{\varepsilon_{2p_1}\varepsilon_{2p_3}}$ &  $ \varepsilon_{2p_3}$ &  $\varepsilon_{2p_1}\varepsilon_{2p_3}$ &  $\varepsilon_{2p_1} $ &  $\varepsilon_{2p_3}$ &  $1$ &  $\varepsilon_{2p_1}$ &  $1$\\
					\hline 	
					
					\rsp $ \sqrt{\varepsilon_{2p_1}\varepsilon_{p_1p_3}}$ &  $1$ &  $\varepsilon_{2p_1}\varepsilon_{p_1p_3}$ &  $-\varepsilon_{2p_1}$ &  $1$ &  $-\varepsilon_{p_1p_3}$ &  $-\varepsilon_{2p_1}$ &  $-\varepsilon_{p_1p_3}$\\
					\hline 	 	
					
					\rsp $ {\sqrt{\varepsilon_{2p_1}\varepsilon_{p_2p_3}}}$ &  $\varepsilon_{p_2p_3}$ &  $-\varepsilon_{2p_1}$ &  $\varepsilon_{2p_1}$ &  $-1$ &  $1$ &  $-\varepsilon_{2p_1}\varepsilon_{p_2p_3}$ &  $-\varepsilon_{p_2p_3}$\\
					\hline 	
					
				\end{tabular}$$
				\caption{Values of Norm Maps }
				\label{tabl3}
			\end{table}


			\noindent\ding{224}  Let us   apply    the norm map $N_{\KK^+/k_1}=1+\tau_3$.  We have:
			\begin{eqnarray*}
				N_{\KK^+/k_1}(\xi^2)&=&
				\varepsilon_{2p_1}^{2a} \cdot\varepsilon_{p_2p_3}^{2b} \cdot (\varepsilon_{2p_1}\varepsilon_{2p_2})^c\cdot (\varepsilon_{2p_1}\varepsilon_{p_1p_2})^d\cdot  (\varepsilon_{2p_1})^e\cdot(-\varepsilon_{2p_1})^f \cdot  \varepsilon_{2p_1}^g\\
				&=&	\varepsilon_{2p_1}^{2a}\varepsilon_{p_2p_3}^{2b}(\varepsilon_{2p_1}\varepsilon_{2p_2})^c (\varepsilon_{2p_1}\varepsilon_{p_1p_2})^d  (-1)^{f} \varepsilon_{2p_1}^{e+f+g}   
			\end{eqnarray*}
			Thus, $f=0$ and $e=g$. It follows that 
			$$\xi^2= \varepsilon_{2p_1}^a  \varepsilon_{p_2p_3}^b \sqrt{\varepsilon_{ 2p_1}\varepsilon_{ 2p_2}}^c \sqrt{\varepsilon_{ 2p_1}\varepsilon_{ p_1p_2}}^d
			\sqrt{\varepsilon_{ 2p_1}\varepsilon_{ 2p_3}}^e  
			\sqrt{\varepsilon_{ 2p_1}\varepsilon_{\k}}^e . $$
			
			\noindent\ding{224}  Let us   apply    the norm map $N_{\KK^+/k_4}=1+\tau_1$ with $k_4= \QQ(\sqrt{2p_2}, \sqrt{2p_3}) $.
			\begin{eqnarray*}
				N_{\KK^+/k_4}(\xi^2)&=&
				1 \cdot\varepsilon_{p_2p_3}^{2b}\cdot  \varepsilon_{2p_2} ^c  \cdot (-1)^d\cdot  \varepsilon_{2p_3} ^e  \cdot 1
			\end{eqnarray*}
			
			Thus $d=0$ and $c=e$. Therefore,
			$$\xi^2= \varepsilon_{2p_1}^a  \varepsilon_{p_2p_3}^b \sqrt{\varepsilon_{ 2p_1}\varepsilon_{ 2p_2}}^e 
			\sqrt{\varepsilon_{ 2p_1}\varepsilon_{ 2p_3}}^e  
			\sqrt{\varepsilon_{ 2p_1}\varepsilon_{\k}}^e . $$

			Thus we eliminated all equations except the following, for which we have to study their solvability:
			\begin{enumerate}[$a)$]
				\item   $\xi^2=      \sqrt{\varepsilon_{ 2p_1}\varepsilon_{ 2p_2}}   
				\sqrt{\varepsilon_{ 2p_1}\varepsilon_{ 2p_3}}   
				\sqrt{\varepsilon_{ 2p_1}\varepsilon_{\k}}  , $

				\item $\xi^2= \varepsilon_{2p_1}    \sqrt{\varepsilon_{ 2p_1}\varepsilon_{ 2p_2}}   
				\sqrt{\varepsilon_{ 2p_1}\varepsilon_{ 2p_3}}   
				\sqrt{\varepsilon_{ 2p_1}\varepsilon_{\k}}  , $

				\item $\xi^2=    \varepsilon_{p_2p_3} \sqrt{\varepsilon_{ 2p_1}\varepsilon_{ 2p_2}}   
				\sqrt{\varepsilon_{ 2p_1}\varepsilon_{ 2p_3}}   
				\sqrt{\varepsilon_{ 2p_1}\varepsilon_{\k}}  , $

				\item $\xi^2= \varepsilon_{2p_1}   \varepsilon_{p_2p_3} $,
				
				\item $\xi^2= \varepsilon_{2p_1}^a  \varepsilon_{p_2p_3}^b   , $ with $a\not=b$.
			\end{enumerate}
			
			Notice that $\varepsilon_{2p_1}  \varepsilon_{p_2p_3}$ is a square in $\KK^+$ whereas  $\varepsilon_{2p_1}$ and $  \varepsilon_{p_2p_3}$ are not (this means that $d)$ is solvable in $\KK^+$ whereas $e)$ is not). 
			
			On the other hand,  as  $h_2(2p_i)=h_2(p_ip_j)=1$ (cf. \cite[Corollary 3.8]{connor88}) and $h_2(2p_1p_2p_3)=4$ (cf. \cite{BenjaminSnyderPrerint2026ranks}), the class number formula (cf. Lemma \ref{wada's f.})  gives $h_2(\KK^+)=\dfrac{1}{2^7}q(\KK^+)$. Since $h_2(\KK^+)=h_2(\k^{1})=1$ (cf. \cite[Theorem 2]{BenLemSnyderJNT1998}), this implies that   $q(\KK^+)=2^7$.
			But if none of the equations $a)$, $b)$ and $c)$ is solvable in $\KK^+$, then   according to Wada's method and the above investigations, the unit group of $\KK^+$ is
			$ \langle-1,     \varepsilon_{2p_1} ,\sqrt{\varepsilon_{2p_1}\varepsilon_{p_2p_3}}, \sqrt{\varepsilon_{ 2p_1}\varepsilon_{ 2p_2}}, \sqrt{\varepsilon_{ 2p_1}\varepsilon_{ p_1p_2}},
			\sqrt{\varepsilon_{ 2p_1}\varepsilon_{ 2p_3}}, \sqrt{\varepsilon_{ 2p_1}\varepsilon_{ p_1p_3}},
			\sqrt{\varepsilon_{ 2p_1}\varepsilon_{\k}} \rangle 
			$. So   $q(\KK^+)=2^6$ which  is a contradiction. Therefore, $\eta \sqrt{\varepsilon_{ 2p_1}\varepsilon_{ 2p_2}}   
			\sqrt{\varepsilon_{ 2p_1}\varepsilon_{ 2p_3}}   
			\sqrt{\varepsilon_{ 2p_1}\varepsilon_{\k}} $ is a square in $\KK^+$ for some  $\eta \in  \{1,\varepsilon_{2p_1},  \varepsilon_{p_2p_3} \}$.
			Hence, we have the result in the first item.



			\item  
			As a fundamental system of units of $\KK^+$ is given by
			$$\{\varepsilon_{2p_1} ,\sqrt{\varepsilon_{2p_1}\varepsilon_{p_2p_3}}, \sqrt{\varepsilon_{ 2p_1}\varepsilon_{ 2p_2}}, \sqrt{\varepsilon_{ 2p_1}\varepsilon_{ p_1p_2}},
			\sqrt{\varepsilon_{ 2p_1}\varepsilon_{ 2p_3}}, \sqrt{\varepsilon_{ 2p_1}\varepsilon_{ p_1p_3}},
			\sqrt[4]{\eta^2 \varepsilon_{ 2p_1}^3 {\varepsilon_{ 2p_2}   
					\varepsilon_{ 2p_3}   \varepsilon_{\k}} }   \},$$
			we consider
			$$\chi^2= \ell\varepsilon_{2p_1}^a\sqrt{\varepsilon_{2p_1}\varepsilon_{p_2p_3}}^b \sqrt{\varepsilon_{ 2p_1}\varepsilon_{ 2p_2}}^c \sqrt{\varepsilon_{ 2p_1}\varepsilon_{ p_1p_2}}^d
			\sqrt{\varepsilon_{ 2p_1}\varepsilon_{ 2p_3}}^e \sqrt{\varepsilon_{ 2p_1}\varepsilon_{ p_1p_3}}^f
			\sqrt[4]{\eta^2 \varepsilon_{ 2p_1}^3 {\varepsilon_{ 2p_2}   \varepsilon_{ 2p_3}   \varepsilon_{\k}} } ^g $$
			where $a, b, c, d, e, f$ and $g$ are in $\{0, 1\}$ and let  $\ell$ be a positive square-free integer that is not divisible by $p_ip_j$ or $2p_i$ for $i \not= j$ and $i$, $j\in \{1, 2,3 \}$.
			Put $X=\sqrt[4]{\eta^2 \varepsilon_{ 2p_1}^3 {\varepsilon_{ 2p_2}   \varepsilon_{ 2p_3}   \varepsilon_{\k}} }$. We have  
			$X^2= \eta \sqrt{\varepsilon_{ 2p_1}^3 {\varepsilon_{ 2p_2}   \varepsilon_{ 2p_3}   \varepsilon_{\k}} }=\eta \sqrt{\varepsilon_{ 2p_1}\varepsilon_{ 2p_2}}   
			\sqrt{\varepsilon_{ 2p_1}\varepsilon_{ 2p_3}}   
			\sqrt{\varepsilon_{ 2p_1}\varepsilon_{\k}} .$ By using Table \ref{tabl3}, we deduce that
			$$( X^2)^{1+\tau_2}=  \eta'\varepsilon_{2p_1}^3\varepsilon_{2p_3} ;$$
			here $\eta'\in\{1,\varepsilon_{2p_1}^2\}$. It follows that $X^{1+\tau_2}=(-1)^{v}\sqrt{\eta'\varepsilon_{2p_1}^3\varepsilon_{2p_3}}$ with  $v\in\{0,1\}$.

			\noindent\ding{224}  By applying   the norm map $N_{\KK^+/k_2}=1+\tau_2$, we get:
			\begin{eqnarray*}
				N_{\KK^+/k_2}(\chi^2)&=&\ell^2
				\varepsilon_{2p_1}^{2a} \cdot(-\varepsilon_{2p_1})^{b} \cdot \varepsilon_{2p_1}^{c}\cdot \varepsilon_{2p_1}^{d}\cdot(\varepsilon_{2p_1}\varepsilon_{2p_3})^e\cdot(\varepsilon_{2p_1}\varepsilon_{p_1p_3})^f \cdot  (-1)^{gv}\sqrt{ \eta'\varepsilon_{2p_1}^3\varepsilon_{2p_3}}^g\\
				&=&\ell^2	\varepsilon_{2p_1}^{2a}(\varepsilon_{2p_1}\varepsilon_{2p_3})^e(\varepsilon_{2p_1}\varepsilon_{p_1p_3})^f (-1)^{b+gv}\varepsilon_{2p_1}^{b+d+c+g }\sqrt{\eta'\varepsilon_{2p_1}\varepsilon_{2p_3}}^g .
			\end{eqnarray*}
			Thus    $b+ gv\equiv\pmod 2$ and $b+d+c+g\equiv0\pmod 2$. Recall  $E_{k_2}=\langle -1,  \varepsilon_{2p_1} , \sqrt{\varepsilon_{ 2p_1}\varepsilon_{ 2p_3}}, \sqrt{\varepsilon_{ 2p_1}\varepsilon_{ p_1p_3}}\rangle $. Thus $q(k_2) =4$. If $g=1$, then $q(k_2)\geq 8$ which is absurd.
			Therefore $g=0$, $b=0$ and $ d=c$. 
			It follows that, 
			$$\chi^2= \ell\varepsilon_{2p_1}^a  \sqrt{\varepsilon_{ 2p_1}\varepsilon_{ 2p_2}}^c \sqrt{\varepsilon_{ 2p_1}\varepsilon_{ p_1p_2}}^c
			\sqrt{\varepsilon_{ 2p_1}\varepsilon_{ 2p_3}}^e \sqrt{\varepsilon_{ 2p_1}\varepsilon_{ p_1p_3}}^f  .$$

			\noindent\ding{224}  Let us   apply    the norm map $N_{\KK^+/k_1}=1+\tau_3$.  We have;
			\begin{eqnarray*}
				N_{\KK^+/k_1}(\chi^2)&=&\ell^2
				\varepsilon_{2p_1}^{2a} \cdot  (\varepsilon_{2p_1}\varepsilon_{2p_2})^{c}\cdot(\varepsilon_{2p_1}\varepsilon_{p_1p_2})^{c}  \cdot (\varepsilon_{2p_1})^e\cdot(-\varepsilon_{2p_1})^f\\
				&=&\ell^2	\varepsilon_{2p_1}^{2a} (\varepsilon_{2p_1}\varepsilon_{2p_2})^{c}(\varepsilon_{2p_1}\varepsilon_{p_1p_2})^{c}    (-1)^{f} \varepsilon_{2p_1}^{e+f}   
			\end{eqnarray*}
			Thus $f=e=0$. Therefore, 
			$$\chi^2= \ell\varepsilon_{2p_1}^a  \sqrt{\varepsilon_{ 2p_1}\varepsilon_{ 2p_2}}^c \sqrt{\varepsilon_{ 2p_1}\varepsilon_{ p_1p_2}}^c
			.$$

			

			\noindent\ding{224}  Let us   apply    the norm map $N_{\KK^+/k_5}=1+\tau_1\tau_2$ with $k_5= \QQ(\sqrt{2p_3}, \sqrt{p_1p_2}) $.
			\begin{eqnarray*}
				N_{\KK^+/k_5}(\chi^2)&=&\ell^2\cdot 	1 \cdot1 \cdot(-\varepsilon_{p_1p_2})^c.
			\end{eqnarray*}
			Thus,  $c=0$. Therefore,
			$$\chi^2= \ell\varepsilon_{2p_1}^a .$$
			By taking $\ell=p_1$, we get $p_1\varepsilon_{2p_1}$ is a square in $\KK^+$. So the result follows by Lemma \ref{Lemmeazizi2}.
			
		\end{enumerate}
		
	\end{proof}

			The following corollary is a result of the second item of the     previous  proof and Lemmas \ref{Lemmeazizi2} and \ref{Lemme azizi}. 

	\begin{corollary}\label{lemma2corollary} Keep the same hypothesis of Lemma \ref{lemma2}.
		Let $\ell$ be a positive square-free integer that is not divisible by $p_ip_j$ or $2p_i$ for   $i \not= j$ and $i$, $j\in \{1, 2,3 \}$, and $\KK_\ell=\KK^+(\sqrt{-\ell})$.	 By the  proof of the second item,  the set of seven elements $$     \{      \sqrt{\varepsilon_{2p_1}\varepsilon_{p_2p_3}}, \sqrt{\varepsilon_{ 2p_1}\varepsilon_{ 2p_2}}, \sqrt{\varepsilon_{ 2p_1}\varepsilon_{ p_1p_2}},
		\sqrt{\varepsilon_{ 2p_1}\varepsilon_{ 2p_3}}, \sqrt{\varepsilon_{ 2p_1}\varepsilon_{ p_1p_3}},\\
		\sqrt[4]{\eta^2 \varepsilon_{ 2p_1}^3 {\varepsilon_{ 2p_2}   
				\varepsilon_{ 2p_3}   \varepsilon_{\k}} }, \varepsilon \}, $$
		is a fundamental system of units of $\KK_\ell$. Here $\eta \in  \{1,\varepsilon_{2p_1},  \varepsilon_{p_2p_3} \}$ and $\varepsilon =\sqrt{\gamma\varepsilon_{2p_1}} \text{ or  } \varepsilon_{2p_1}$   according to whether (respectively) $\ell \in\{1,2, p_1,  p_2 ,  p_3\}$ or not, with $\gamma=\zeta_4$ or $-1$ according to whether (respectively) $\ell=1$ or not.
	\end{corollary}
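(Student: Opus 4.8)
The plan is to run the argument of the second item of Lemma~\ref{lemma2} with $-p_1$ replaced by the general $-\ell$, and then to read off the seventh generator from Lemmas~\ref{Lemmeazizi2} and \ref{Lemme azizi}. Set $K_0=\KK^+$ and apply Lemma~\ref{Lemmeazizi2} with $\beta=\ell$ when $\ell\neq1$ (so that $i\notin\KK_\ell$), and Lemma~\ref{Lemme azizi} when $\ell=1$ (so that $\KK_1=\KK^+(i)$). In either case a new seventh generator exists precisely when $\ell\varepsilon$ (resp. $(2+\mu_{n_0})\varepsilon$) is a square in $\KK^+$ for some unit $\varepsilon$, and it then equals $\sqrt{-\varepsilon}$ (resp. $\sqrt{\zeta_{n_0}\varepsilon}$). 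Writing the candidate as $\chi^2=\ell\,\varepsilon_{2p_1}^{a}\sqrt{\varepsilon_{2p_1}\varepsilon_{p_2p_3}}^{b}\cdots$ exactly as in the proof of item~2, I note that the three relative norms $N_{\KK^+/k_2}=1+\tau_2$, $N_{\KK^+/k_1}=1+\tau_3$ and $N_{\KK^+/k_5}=1+\tau_1\tau_2$ used there apply verbatim, since a rational scalar contributes only the square factor $\ell^2$ to each norm and so leaves every parity argument unchanged. Hence any such $\chi$ forces $\chi^2=\ell\,\varepsilon_{2p_1}^{a}$ with $a\in\{0,1\}$, and the statement reduces to deciding, for each admissible $\ell$, whether $\ell$ or $\ell\varepsilon_{2p_1}$ is a square in $\KK^+$.

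The key computation is then a square-class analysis inside $\QQ^{\ast}/\QQ^{\ast2}$. Since $\KK^+=\QQ(\sqrt{2p_1},\sqrt{2p_2},\sqrt{2p_3})$, one has $\sqrt{d}\in\KK^+$ if and only if $d$ lies in the subgroup generated by $2p_1,2p_2,2p_3$, i.e. $d\in\{1,\,2p_1,\,2p_2,\,2p_3,\,p_1p_2,\,p_1p_3,\,p_2p_3,\,2p_1p_2p_3\}$ modulo squares. For $a=0$ this shows $\ell$ is a square in $\KK^+$ only for $\ell=1$ once the hypotheses (no $p_ip_j$ and no $2p_i$ dividing $\ell$) are imposed. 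For $a=1$ I use Lemma~\ref{BenjSnyLemma}(2) to write $\sqrt{\varepsilon_{2p_1}}=\alpha\sqrt{2}+\beta\sqrt{p_1}$, with $\alpha,\beta\neq0$ since $2\alpha^2-p_1\beta^2=(2/p_1)=\pm1$. Multiplying by $\sqrt{\ell}$ gives $\sqrt{\ell\varepsilon_{2p_1}}=\alpha\sqrt{2\ell}+\beta\sqrt{p_1\ell}=\sqrt{2\ell}\,(\alpha+\tfrac{\beta}{2}\sqrt{2p_1})$, where the second factor is a nonzero element of $\QQ(\sqrt{2p_1})\subset\KK^+$. Hence $\ell\varepsilon_{2p_1}$ is a square in $\KK^+$ if and only if $\sqrt{2\ell}\in\KK^+$, i.e. iff $2\ell$ lies in the square-class subgroup above; under the divisibility hypotheses this holds precisely for $\ell\in\{2,p_1,p_2,p_3\}$.

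Assembling the three cases matches the stated form of $\varepsilon$ and $\gamma$. If $\ell\in\{2,p_1,p_2,p_3\}$, then $\ell\varepsilon_{2p_1}$ is a square in $\KK^+$, so Lemma~\ref{Lemmeazizi2}(1) yields the seventh generator $\sqrt{-\varepsilon_{2p_1}}$, i.e. $\varepsilon=\sqrt{\gamma\varepsilon_{2p_1}}$ with $\gamma=-1$. If $\ell=1$, then $\KK_1=\KK^+(i)$; here $\sqrt{2}\notin\KK^+(i)$, because $2$ does not lie in the square-class group generated by $-1,2p_1,2p_2,2p_3$, so $\zeta_8\notin\KK_1$ and $n_0=2$, whence $2+\mu_{n_0}=2$. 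Since $\sqrt{2\varepsilon_{2p_1}}=2\alpha+\beta\sqrt{2p_1}\in\KK^+$ shows $2\varepsilon_{2p_1}$ is a square (and the norm reduction with the rational scalar $2$ identifies $\varepsilon_{2p_1}$ as the relevant unit), Lemma~\ref{Lemme azizi}(1) gives $\sqrt{\zeta_2\varepsilon_{2p_1}}=\sqrt{i\varepsilon_{2p_1}}$, i.e. $\gamma=\zeta_4$. Finally, if $\ell\notin\{1,2,p_1,p_2,p_3\}$, then neither $\ell$ nor $\ell\varepsilon_{2p_1}$ is a square in $\KK^+$, so case~2 of the relevant lemma applies, a fundamental system of $\KK_\ell$ coincides with that of $\KK^+$, and the seventh generator is $\varepsilon=\varepsilon_{2p_1}$.

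The main obstacle is the square-class bookkeeping of the middle paragraph: one must argue cleanly that $\ell\varepsilon_{2p_1}$ being a square is equivalent to $\sqrt{2\ell}\in\KK^+$, which rests on both coefficients $\alpha,\beta$ being nonzero and on the factorization $\alpha\sqrt{2\ell}+\beta\sqrt{p_1\ell}=\sqrt{2\ell}(\alpha+\tfrac{\beta}{2}\sqrt{2p_1})$, and one must separately pin down $n_0=2$ in the ramified case $\ell=1$. The norm-map reduction itself is inherited from the proof of Lemma~\ref{lemma2}, requiring only the observation that a rational scalar is inert to those parity computations.
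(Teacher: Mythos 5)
Your proposal is correct and follows the same route the paper intends: it reuses the norm-map reduction from the proof of the second item of Lemma \ref{lemma2} (which is already carried out there for a general admissible $\ell$, ending at $\chi^2=\ell\varepsilon_{2p_1}^{a}$) and then invokes Lemmas \ref{Lemmeazizi2} and \ref{Lemme azizi}, exactly as the paper's one-line justification indicates. The only difference is that you make explicit the square-class computation deciding for which $\ell$ the equation $\chi^2=\ell\varepsilon_{2p_1}^{a}$ is solvable, and the determination $n_0=2$ when $\ell=1$ --- details the paper leaves implicit but which match its stated conclusion.
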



	\subsection{\bf  Units of $\k^1$ and $\k_+^{1}$ when $d$ Satisfies the Conditions $(c_3)$ and $d_i=-8$, for $i=1$, $2$ or $3$}\label{subsec3} $\; \\$
	
	In what follows,  we  compute the unit groups of the fields  $\k^1$ and $\k^1_+$  with $\k=\QQ(\sqrt{ p_1p_2p_3p_4})$ and $p_i=2$ for $i=1$, $2$ or $3$ (i.e. $d_i=-8$).  Let $\tau_i$, for $i=1$, $2$, $3$, be as in Subsection \ref{subsec1}. Let us start by proving the following useful lemma.
	
	\bigskip
	
	\begin{lemma} \label{lemmaa3} 
		Let   $p_1=2$  and  $p_2 \equiv p_3\equiv p_4 \equiv 3\pmod 4$  be distinct prime numbers such that
		$$ \left(\dfrac{-p_1}{p_2}\right)=\left(\dfrac{-p_2}{p_3}\right)=\left(\dfrac{-p_3}{p_1}\right)=-1 \text{ and } \left(\dfrac{-p_1}{p_4}\right)=\left(\dfrac{-p_2}{p_4}\right)=\left(\dfrac{-p_3}{p_4}\right)=1.$$
		Then, we have:
		\begin{eqnarray}\label{sqrtEpsk22}
			\sqrt{\varepsilon_{\k}}=	\sqrt{\varepsilon_{2p_2p_3p_4}}=\frac12(2\alpha\sqrt{p_4}+ \beta\sqrt{2p_2 p_3}) \text{ and } 2=2p_4\alpha^2-\beta^2  p_2p_3.
		\end{eqnarray}
		Here  $\alpha$ and $\beta$ are   integers.
	\end{lemma}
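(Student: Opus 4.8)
The plan is to mirror the proof of Lemma \ref{lemmaa}, adjusting the factorization analysis to the fact that the prime $2$ now sits in position $p_1$ rather than $p_4$. First I would write $\varepsilon_{\k}=a+b\sqrt{2p_2p_3p_4}$ with $a,b\in\ZZ$ (the radicand is $\equiv 2\pmod 4$, so $\OO_{\k}=\ZZ[\sqrt{2p_2p_3p_4}]$ and there are no semi-integers), and recall that $N(\varepsilon_{\k})=1$ since $2p_2p_3p_4$ has prime factors $\equiv 3\pmod 4$. Then $a^2-1=2p_2p_3p_4b^2$, and reducing modulo $4$ (and then $8$) forces $b$ even and $a$ odd, so $\gcd(a-1,a+1)=2$.

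Next I would record the arithmetic of the hypotheses, which is what drives the whole argument. With $p_1=2$, the conditions $\left(\frac{-2}{p_2}\right)=\left(\frac{-p_3}{2}\right)=-1$ and $\left(\frac{-2}{p_4}\right)=1$ translate into $p_2\equiv 7$ and $p_3\equiv p_4\equiv 3\pmod 8$, giving $\left(\frac{2}{p_2}\right)=1$ and $\left(\frac{2}{p_3}\right)=\left(\frac{2}{p_4}\right)=-1$; the remaining conditions together with quadratic reciprocity yield $\left(\frac{p_2}{p_3}\right)=\left(\frac{p_4}{p_2}\right)=\left(\frac{p_4}{p_3}\right)=1$ while $\left(\frac{p_3}{p_2}\right)=\left(\frac{p_2}{p_4}\right)=\left(\frac{p_3}{p_4}\right)=-1$.

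Then I would factor $a^2-1=(a-1)(a+1)$ by unique factorization in $\ZZ$. As in Lemma \ref{lemmaa}, Lemma \ref{lem2} (applied to $d=2p_2p_3p_4$) excludes the system in which the factor $2$ is unaccompanied by any odd prime, so the possibilities reduce to three systems according to how many of $p_2,p_3,p_4$ accompany the $2$: namely $(1)$ $\{a\pm1=b_1^2,\ a\mp1=2p_2p_3p_4b_2^2\}$, $(2)$ $\{a\pm1=2p_ib_1^2,\ a\mp1=p_jp_kb_2^2\}$, and $(3)$ $\{a\pm1=p_ib_1^2,\ a\mp1=2p_jp_kb_2^2\}$, with $\{i,j,k\}=\{2,3,4\}$. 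For each system and each sign I would pick an odd prime $q$ dividing one side and compare two evaluations of $\left(\frac{a\mp1}{q}\right)$: one through the square factorization, giving a product of the tabulated Legendre symbols, and one through $a\mp1\equiv\pm2\pmod q$, giving $\left(\frac{\pm2}{q}\right)$. A mismatch eliminates the case. Carrying this out eliminates System $(1)$, all of System $(3)$, and System $(2)$ for $i\in\{2,3\}$ as well as the wrong sign of System $(2)$ with $i=4$, leaving only
$$a+1=2p_4\alpha^2,\qquad a-1=p_2p_3\beta^2.$$
From the surviving system, adding the two equations gives $2a=2p_4\alpha^2+p_2p_3\beta^2$ and $b=\alpha\beta$, whence
$$2\varepsilon_{\k}=2p_4\alpha^2+p_2p_3\beta^2+2\alpha\beta\sqrt{2p_2p_3p_4}=\left(\alpha\sqrt{2p_4}+\beta\sqrt{p_2p_3}\right)^2.$$
Thus $\sqrt{2\varepsilon_{\k}}=\alpha\sqrt{2p_4}+\beta\sqrt{p_2p_3}$, and dividing by $\sqrt2$ yields $\sqrt{\varepsilon_{\k}}=\frac12(2\alpha\sqrt{p_4}+\beta\sqrt{2p_2p_3})$, while subtracting the two equations gives $2=2p_4\alpha^2-\beta^2p_2p_3$, as claimed.

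The main obstacle is the elimination step. Unlike Lemma \ref{lemmaa}, where all three odd primes are $\equiv 7\pmod 8$ and the argument is symmetric, here the congruences are mixed ($p_2\equiv 7$ but $p_3\equiv p_4\equiv 3\pmod 8$), so no single test prime works uniformly across the cases. The care lies in choosing, for each sign/partition case, a prime $q$ for which the tabulated symbols actually clash, and in verifying that the surviving system is internally consistent (which it must be, since it is the true one: one checks $\left(\frac{p_4}{p_2}\right)=\left(\frac{p_4}{p_3}\right)=1$ and $\left(\frac{p_2p_3}{p_4}\right)=\left(\frac{-2}{p_4}\right)=1$ all hold). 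The concluding extraction of the square root is then routine, and a parity check ($a+1\equiv 2\pmod 4$ forces $\alpha$ odd, hence $\beta$ even) confirms that all coefficients make sense.
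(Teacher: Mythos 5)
Your proposal is correct and follows essentially the same route as the paper: write $\varepsilon_{\k}=a+b\sqrt{2p_2p_3p_4}$, use $N(\varepsilon_{\k})=1$, unique factorization and Lemma \ref{lem2} to reduce to the systems $(1)$--$(3)$, eliminate all but $a+1=2p_4\alpha^2$, $a-1=p_2p_3\beta^2$ by comparing Legendre-symbol evaluations of $a\pm1$ at suitable odd primes, and then extract the square root of $2\varepsilon_{\k}$. If anything, your elimination step is stated more completely than the paper's, which only writes out System $(1)$, System $(3)$, and the wrong sign of System $(2)$ with $i=4$, whereas you also flag (correctly) that System $(2)$ with $i\in\{2,3\}$ must be, and can be, ruled out by the same symbol comparison.
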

	\begin{proof}Notice that our conditions here are equivalent to $p_2\equiv 7\pmod 8$, $p_3\equiv p_4\equiv 3\pmod 8$ and $\left(\dfrac{p_3}{p_2}\right)=\left(\dfrac{p_2}{p_4}\right)=\left(\dfrac{p_3}{p_4}\right)=-1$.
		Put $\{2,3,4\}=\{i,j,k\}$ and $\varepsilon_{2p_2 p_3p_4}=a+b\sqrt{2p_2 p_3p_4}$ with $a$ and $b$ are integers. As $ N(\varepsilon_{2p_2 p_3p_4})=1 $, then by the unique factorization  of $ a^{2}-1=2p_2 p_3p_4b^{2} $ in $ \mathbb{Z} $, and Lemma \ref{lem2}, there exist $b_1$ and $b_2$ in $\mathbb{Z}$ such that  we have exactly one of the following systems:
		$$(1):\ \left\{ \begin{array}{ll}
			a\pm1=b_1^2\\
			a\mp1=2p_2 p_3p_4b_2^2,
		\end{array}\right.  \quad
		(2):\ \left\{ \begin{array}{ll}
			a\pm1=2p_ib_1^2\\
			a\mp1=p_jp_kb_2^2,
		\end{array}\right. \quad
		(3):\ \left\{ \begin{array}{ll}
			a\pm1=p_ib_1^2\\
			a\mp1=2p_jp_kb_2^2.
		\end{array}\right. 
		$$
		Here $b_1$ and $b_2$ are integers such that $b=b_1b_2$.

		\begin{enumerate}[\rm$\bullet$]
			\item  Assume that we are in the case of System $(1)$. For $s\in\{2,3,4\}$, we have:	
			\[1=\left(\dfrac{b_1^2}{p_s}\right)=\left(\dfrac{a\pm1}{p_s}\right)=\left(\dfrac{a\mp1\pm2}{p_s}\right)=\left(\dfrac{2p_2 p_3p_4b_2^2\pm2}{p_s}\right)=\left(\dfrac{\pm2}{p_s}\right)=\left(\dfrac{\pm1}{p_s}\right)\left(\dfrac{2}{p_s}\right).
			\]
			Notice that according to the sign of $\pm1$ we may always choose $s\in\{2,3,4\}$ such that $\left(\dfrac{\pm1}{p_s}\right)\left(\dfrac{2}{p_s}\right)=-1$, which gives a contradiction. So this system is eliminated.

			\item  Assume that we are in the case of System $(3)$. For $s\in \{j,k\}$, we have:
			\[\left(\dfrac{ p_i }{p_s}\right)=\left(\dfrac{p_ib_1^2}{p_s}\right)=\left(\dfrac{a\pm1}{p_s}\right)=\left(\dfrac{a\mp1\pm2}{p_s}\right)=\left(\dfrac{2 p_jp_kb_2^2\pm2}{p_s}\right)=\left(\dfrac{\pm1}{p_s}\right)\left(\dfrac{2}{p_s}\right).
			\]
			Notice that we have
			\begin{enumerate}[\rm$\star$]
				\item For $i=2$, then the above equality   implies 
				[$\left(\dfrac{ p_2 }{p_3}\right)=1=-\left(\dfrac{\pm1}{p_3}\right)$  and $\left(\dfrac{ p_2 }{p_4}\right)=-1=-\left(\dfrac{\pm1}{p_4}\right)$] which gives a contradiction.
				
				\item  For $i=3$,  then the above equality   implies 
				[$\left(\dfrac{ p_3 }{p_2}\right)=-1=\left(\dfrac{\pm1}{p_2}\right)$  and $\left(\dfrac{ p_3 }{p_4}\right)=-1=-\left(\dfrac{\pm1}{p_4}\right)$] which gives a contradiction.
				
				\item For $i=4$,  then the above equality   implies 
				[$\left(\dfrac{ p_4 }{p_2}\right)=1=\left(\dfrac{\pm1}{p_2}\right)$  and $\left(\dfrac{ p_4 }{p_3}\right)=1=-\left(\dfrac{\pm1}{p_3}\right)$] which gives a contradiction.
			\end{enumerate}

			So this system is eliminated.

			\item Notice furthermore that the system $\left\{ \begin{array}{ll}
				a-1=2p_4b_1^2\\
				a+1=p_2 p_3b_2^2,
			\end{array}\right. $  is impossible as  it implies $1= \left(\dfrac{2p_4}{p_2}\right)=\left(\dfrac{a-1}{p_2}\right)=\left(\dfrac{-2}{p_2}\right)=-1$, which is a contradiction.
		\end{enumerate}
		It follows that  we eliminated all systems except the following:
		$$\left\{ \begin{array}{ll}
			a+1=2p_4b_1^2\\
			a-1=p_2 p_3b_2^2,
		\end{array}\right. $$
		Therefore, we have $2\varepsilon_{\k}=2a+2b\sqrt{2p_2 p_3p_4}=2p_4b_1^2+p_2 p_3b_2^2+2  b_1 b_2\sqrt{2p_2 p_3p_4}=(b_1\sqrt{2 p_4}+b_2\sqrt{ p_2 p_3})^2$. Thus, 
		$\sqrt{2\varepsilon_{\k}}=b_1\sqrt{2 p_4}+b_2\sqrt{p_2 p_3}$ and so 	$\sqrt{ \varepsilon_{\k}}=\frac12(b_12\sqrt{p_4}+b_2\sqrt{2p_2 p_3})$ 
		which gives the result by taking $\alpha=b_1$ and  $\beta=b_2$.
	\end{proof}

	\begin{remark}
		We can use the techniques in the proof of  \cite[Lemma 7]{BenSnyder25PartII} to show that we have 
		\begin{eqnarray}\label{ekexpr}
			\sqrt{\varepsilon_{\k}}=\frac12( a\sqrt{p_4}+b\sqrt{2p_2p_3} ) \text{ and } a^2 p_4-2 b^2 p_2p_3=4
		\end{eqnarray} 
		for some integers $a$ and $b$. By combining this fact with the expression of $\sqrt{\varepsilon_k}$ given by Lemma \ref{lemmaa3}, we deduce that 	
		the integer $a$    is even. Here is an example illustrating this fact.  
		Let $p_1 = 2$, $p_2 = 23$, $p_3 = 11$, $p_4 = 3$.
		So by    Table \ref{tableexamples},  we have $\k=\mathbb{Q}(\sqrt{{2p_2p_3p_4}})=\mathbb{Q}(\sqrt{1518})$ and $\varepsilon_{\k} = 1013 + 26\sqrt{1518}$. According to \eqref{ekexpr},
		we should have $$\sqrt{\varepsilon_\k} = \frac12( a \sqrt{3} + b  \sqrt{2\cdot 23\cdot 11}),$$ for some integers $a$ and $b$.  So we have
		$\varepsilon_{\k} = (\sqrt\varepsilon_{\k})^2 =
		\frac14(3 a^2 + 506 b^2 + 2ab  \sqrt{1518})$.
		So this means $\frac12 ab = 26$, i.e., $ab=52$. We assume (wlog) that $a$ and $b$ are positive in the equation for $\varepsilon_{\k}$. Therefore, $a$ and $b$ are in $\{1,2,4,13,26,52\}$ and we check that the only possibility that satisfies $ab=52$ and $a^2 p_4-2 b^2 p_2p_3=4$  is 
		$(a,b)=(26,2)$ (in fact, we have $3\cdot 26^2 -  2 \cdot 23\cdot 11 \cdot2^2 =2028-2024=4$ and $\frac14(3\cdot 26^2 + 506\cdot 2^2) = 1013$).Therefore, we have
		\begin{eqnarray*} 
			\sqrt{\varepsilon_{\k}}=\frac12(2\alpha\sqrt{3}+ \beta\sqrt{2\cdot 23\cdot 11}) \text{ and } 2=2\cdot3\cdot\alpha^2-\beta^2 \cdot 23\cdot 11 .
		\end{eqnarray*}
		with $\alpha=13$ and $\beta =2$, which is consistent with the expression of $\sqrt{\varepsilon_{\k}}$ and the equality for 2 given in Lemma \ref{lemmaa3}.
	\end{remark}

	\begin{lemma} \label{lemma23} 
		Let $p_1=2$ and  $p_2 \equiv p_3\equiv p_4 \equiv 3\pmod 4$ be  distinct prime numbers such that
		$$ \left(\dfrac{-p_1}{p_2}\right)=\left(\dfrac{-p_2}{p_3}\right)=\left(\dfrac{-p_3}{p_1}\right)=-1 \text{ and } \left(\dfrac{-p_1}{p_4}\right)=\left(\dfrac{-p_2}{p_4}\right)=\left(\dfrac{-p_3}{p_4}\right)=1.$$
		Put $\KK^+=\QQ(\sqrt{2p_2}, \sqrt{2p_3}, \sqrt{2p_4})$ and $\KK= \KK^+(\sqrt{-2})$.	Then, we have:
		\begin{enumerate}[\rm $1)$]
			\item The unit group of $\KK^+$ is :
			\begin{eqnarray*}
				E_{\KK^+}=\langle-1,     \varepsilon_{2p_4},  
				\sqrt{  \varepsilon_{2p_4}\varepsilon_{p_2p_3}}    , \sqrt{\varepsilon_{ 2p_4}\varepsilon_{ 2p_2}}, \sqrt{\varepsilon_{ 2p_4}\varepsilon_{ p_4p_2}},
				\sqrt{\varepsilon_{ 2p_4}\varepsilon_{ 2p_3}}, 
				\sqrt{\varepsilon_{ 2p_4}\varepsilon_{ p_4p_3}}, \\ \sqrt[4]{\eta^2 \varepsilon_{ 2p_4}^3\varepsilon_{ p_4p_2}\varepsilon_{ p_4p_3}\varepsilon_{\k}  } \ \rangle.
			\end{eqnarray*}

			\item The unit group of $\KK$ is :
			\begin{eqnarray*}E_{\KK}=\langle \zeta,       \sqrt{  \varepsilon_{2p_4}\varepsilon_{p_2p_3}}    , \sqrt{\varepsilon_{ 2p_4}\varepsilon_{ 2p_2}}, \sqrt{\varepsilon_{ 2p_4}\varepsilon_{ p_4p_2}},
				\sqrt{\varepsilon_{ 2p_4}\varepsilon_{ 2p_3}}, 
				\sqrt{\varepsilon_{ 2p_4}\varepsilon_{ p_4p_3}}, \\ \sqrt[4]{\eta^2 \varepsilon_{ 2p_4}^3\varepsilon_{ p_4p_2}\varepsilon_{ p_4p_3}\varepsilon_{\k}  } ,
				\sqrt{- \varepsilon_{2p_4}} 
				\	\rangle,\end{eqnarray*}
		\end{enumerate}
		Here $\zeta=\zeta_3 \text{ or } -1$ according to whether (respectively) $q_i=3$, for some $i\in\{3,4\}$, or not, and $\eta \in  \{1,\varepsilon_{2p_4},  \varepsilon_{p_2p_3} \}$.	 
	\end{lemma}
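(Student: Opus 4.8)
The plan is to run the proof of Lemma~\ref{lemma2} essentially verbatim, the only structural change being that the split prime $2$ now occupies the first slot, so that $\varepsilon_{2p_4}$ takes over the role previously played by $\varepsilon_{2p_1}$. First I would introduce the three biquadratic subfields
$$k_1=\QQ(\sqrt{2p_4},\sqrt{2p_2}),\quad k_2=\QQ(\sqrt{2p_4},\sqrt{2p_3}),\quad k_3=\QQ(\sqrt{2p_4},\sqrt{p_2p_3}),$$
each containing $\sqrt{2p_4}$ and fixed by $\langle\tau_2\rangle$, $\langle\tau_1\rangle$, $\langle\tau_1\tau_2\rangle$ respectively. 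Feeding the radical expressions $\sqrt{\varepsilon_{2p_j}}=a\sqrt2+b\sqrt{p_j}$ and $\sqrt{\varepsilon_{p_ip_j}}=v\sqrt{p_i}+w\sqrt{p_j}$ from Lemma~\ref{BenjSnyLemma}, together with the decisive $\sqrt{\varepsilon_\k}=\tfrac12(2\alpha\sqrt{p_4}+\beta\sqrt{2p_2p_3})$ and the norm relation $2=2p_4\alpha^2-\beta^2p_2p_3$ from Lemma~\ref{lemmaa3}, into Wada's method yields
$$E_{k_1}=\langle-1,\varepsilon_{2p_4},\sqrt{\varepsilon_{2p_4}\varepsilon_{2p_2}},\sqrt{\varepsilon_{2p_4}\varepsilon_{p_4p_2}}\rangle$$
and the analogous descriptions of $E_{k_2}$ and $E_{k_3}$ (with $\sqrt{\varepsilon_{2p_4}\varepsilon_\k}$ arising from $E_{k_3}$), hence a generating set for $E_{k_1}E_{k_2}E_{k_3}$.

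Next I would construct the analogue of Table~\ref{tabl3} by evaluating $\varepsilon^{1+\tau_i}$ on each generator. The entries coming from $\sqrt{\varepsilon_{2p_j}}$ and $\sqrt{\varepsilon_{p_ip_j}}$ are routine sign computations controlled by the quadratic-residue hypotheses; the one delicate row is $\sqrt{\varepsilon_{2p_4}\varepsilon_\k}$, whose conjugates I would extract exactly as for $\sqrt{\varepsilon_{2p_1}\varepsilon_\k}$ in the proof of Lemma~\ref{lemma2}, namely by multiplying through by $\sqrt{2p_4}=\sqrt2\cdot\sqrt{p_4}$ and using $\sqrt{2p_2p_3p_4}^{\tau_i}=-\sqrt{2p_2p_3p_4}$ together with $2=2p_4\alpha^2-\beta^2p_2p_3$ to obtain $(\sqrt{\varepsilon_{2p_4}\varepsilon_\k})^{1+\tau_3}=1$, and similarly for the remaining automorphisms. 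Writing a hypothetical square $\xi^2$ as a product of the generators with exponents in $\{0,1\}$ and applying the norm maps $N_{\KK^+/k_i}=1+\tau_j$ (plus one further norm down to $\QQ(\sqrt{2p_2},\sqrt{2p_3})$) then collapses the exponents, leaving only the question of whether $\eta\,\sqrt{\varepsilon_{2p_4}\varepsilon_{2p_2}}\,\sqrt{\varepsilon_{2p_4}\varepsilon_{2p_3}}\,\sqrt{\varepsilon_{2p_4}\varepsilon_\k}$ is a square in $\KK^+$ for some $\eta\in\{1,\varepsilon_{2p_4},\varepsilon_{p_2p_3}\}$.

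To settle this last point I would invoke the class number formula (Lemma~\ref{wada's f.}): since $h_2(2p_i)=h_2(p_ip_j)=1$ \cite{connor88} and $h_2(2p_2p_3p_4)=4$ \cite{BenjaminSnyderPrerint2026ranks}, it gives $h_2(\KK^+)=\tfrac1{2^7}q(\KK^+)$, and because $h_2(\KK^+)=h_2(\k^1)=1$ \cite{BenLemSnyderJNT1998} this forces $q(\KK^+)=2^7$. Were none of the three candidate products a square, Wada's method would yield only $q(\KK^+)=2^6$, a contradiction; hence the quartic generator $\sqrt[4]{\eta^2\varepsilon_{2p_4}^3\varepsilon_{p_4p_2}\varepsilon_{p_4p_3}\varepsilon_\k}$ must occur, which proves item~$1)$.

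For item~$2)$ I would pass to $\KK=\KK^+(\sqrt{-2})$ by the method of Lemma~\ref{Lemmeazizi2}. Writing $\chi^2=\ell\cdot(\text{product of the seven fundamental units})$, recording the conjugates of the quartic generator from the table, and running the analogous chain of norm maps, one reduces $\chi^2$ to $\ell\varepsilon_{2p_4}^a$; choosing $\ell=2$ and noting that $2\varepsilon_{2p_4}=(\sqrt2\,\sqrt{\varepsilon_{2p_4}})^2=(2a+b\sqrt{2p_4})^2$ is a square in $\KK^+$, Lemma~\ref{Lemmeazizi2} contributes the generator $\sqrt{-\varepsilon_{2p_4}}$. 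Finally the leading torsion factor $\zeta$ is determined elementarily: since $\sqrt{2p_i}\cdot\sqrt{-2}=2\sqrt{-p_i}$, one has $\zeta_3\in\KK$ exactly when $\sqrt{-3}\in\KK$, i.e.\ when $p_3=3$ or $p_4=3$ (the hypotheses force $p_2\equiv7\pmod8$, so $p_2\neq3$), which yields $\zeta=\zeta_3$ in that case and $\zeta=-1$ otherwise. I expect the main obstacle to be the faithful construction of the norm-value table, in particular the row for $\sqrt{\varepsilon_{2p_4}\varepsilon_\k}$ and the sign bookkeeping dictated by Lemma~\ref{lemmaa3}; once that table is in place, everything downstream is a mechanical repetition of the proof of Lemma~\ref{lemma2}.
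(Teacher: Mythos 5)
Your proposal follows the paper's proof essentially verbatim: the same three biquadratic subfields, Wada's method fed by Lemma \ref{BenjSnyLemma} and Lemma \ref{lemmaa3}, a norm-value table, the Kuroda/Herglotz count forcing $q(\KK^+)=2^7$ to detect the quartic generator, and Lemma \ref{Lemmeazizi2} with $\ell=2$ for the CM step. The only slip is in the stabilizers: since $\tau_1$ is the automorphism negating $\sqrt{2p_4}=\sqrt{p_1p_4}$, the fields $k_1$, $k_2$, $k_3$ are fixed by $\langle\tau_3\rangle$, $\langle\tau_2\rangle$, $\langle\tau_2\tau_3\rangle$ (not $\langle\tau_2\rangle$, $\langle\tau_1\rangle$, $\langle\tau_1\tau_2\rangle$), and the extra norm in the paper goes down to $\QQ(\sqrt{p_4p_2},\sqrt{p_4p_3})$ via $1+\tau_1$ rather than to $\QQ(\sqrt{2p_2},\sqrt{2p_3})$; these corrections do not change the argument.
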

	\begin{proof}We shall used the same technique as in the proof of Lemma \ref{lemma2}.
		\begin{enumerate}[\rm $1)$]
			\item Let	
			$k_1 = \QQ(\sqrt{2p_4}, \sqrt{p_2p_4}) ,$ 
			$k_2 = \QQ(\sqrt{2p_4}, \sqrt{p_3p_4})  $  and 
			$k_3 =\QQ(\sqrt{2p_4}, \sqrt{p_2p_3})$.
			Recall that 
			\begin{eqnarray*}\label{sqrtEps3}
				\sqrt{\varepsilon_{p_ip_j}}=	 v\sqrt{p_i}+ w\sqrt{p_j} \text{ and } \gamma_{ij}=p_iv^2-p_{j}w^2,  
			\end{eqnarray*}
			
			\begin{eqnarray*}\label{sqrtEps233} 
				\sqrt{\varepsilon_{2p_j}}=	 \alpha\sqrt{2}+ \beta\sqrt{p_j} \text{ and } (2/p_j)=2\alpha^2-p_{j}\beta^2,
			\end{eqnarray*}
			and,  by Lemma \ref{lemmaa3}, we have
			\begin{eqnarray*}\label{sqrtEps2k3} 
				\sqrt{\varepsilon_{\k}}=\frac12(2x\sqrt{p_4}+ y\sqrt{2p_2 p_3}) \text{ and } 2=2p_4x^2-y^2  p_2p_3,
			\end{eqnarray*}
			here $v$, $w$, $\alpha$, $\beta$ $x$ and $y$ are   integers or semi-integers.  
			Therefore, by Wada's method  we deduce that
			
			$$ 
			E_{k_1}=\langle -1,  \varepsilon_{2p_4} , \sqrt{\varepsilon_{ 2p_4}\varepsilon_{ 2p_2}}, \sqrt{\varepsilon_{ 2p_4}\varepsilon_{ p_4p_2}}\rangle, \quad  
			E_{k_2}=\langle -1,  \varepsilon_{2p_4} , \sqrt{\varepsilon_{ 2p_4}\varepsilon_{ 2p_3}}, \sqrt{\varepsilon_{ 2p_4}\varepsilon_{ p_4p_3}}\rangle $$
			$$\text{ and } E_{k_3}=\langle -1,  \varepsilon_{ 2p_4} , \varepsilon_{p_2p_3}, \sqrt{\varepsilon_{ 2p_4}\varepsilon_{\k}}\rangle.$$
			
			Therefore, we have:  \begin{eqnarray*}\label{E1E2E33}
				E_{k_1}E_{k_2}E_{k_3}=\langle-1,     \varepsilon_{2p_4}, \varepsilon_{p_2p_3}   , \sqrt{\varepsilon_{ 2p_4}\varepsilon_{ 2p_2}}, \sqrt{\varepsilon_{ 2p_4}\varepsilon_{ p_4p_2}},
				\sqrt{\varepsilon_{ 2p_4}\varepsilon_{ 2p_3}}, \sqrt{\varepsilon_{ 2p_4}\varepsilon_{ p_4p_3}},
				\sqrt{\varepsilon_{ 2p_4}\varepsilon_{\k}} \rangle.
			\end{eqnarray*}	 	
			Let  $\xi$ be an element of $\KK^+$ which is the  square root of an element of $E_{k_1}E_{k_2}E_{k_3}$. Therefore, we can assume that
			$$\xi^2=	\varepsilon_{2p_4}^a   \varepsilon_{p_2p_3}^b     \sqrt{\varepsilon_{ 2p_4}\varepsilon_{ 2p_2}}^c \sqrt{\varepsilon_{ 2p_4}\varepsilon_{ p_4p_2}}^d
			\sqrt{\varepsilon_{ 2p_4}\varepsilon_{ 2p_3}}^e \sqrt{\varepsilon_{ 2p_4}\varepsilon_{ p_4p_3}}^f
			\sqrt{\varepsilon_{ 2p_4}\varepsilon_{\k}}^g,$$
			where $a, b, c, d, e, f$ and $g$ are in $\{0, 1\}$. We have the following table (cf. Table \ref{tabl33}):

			\begin{table}[H]
				$$
				\begin{tabular}{|c|c|c|c|c|c|c|c|}
					\hline\rsp	$\varepsilon $ & $\varepsilon^{1+\tau_1} $ & $\varepsilon^{1+\tau_2}$ & $\varepsilon^{1+\tau_3}$ \\
					\hline
					
					\rsp $ \sqrt{\varepsilon_{2p_4}\varepsilon_{\k}}$ &  $1$ &  $\varepsilon_{2p_4}$ &  $\varepsilon_{2p_4}$ \\
					\hline
					
					\rsp $ \sqrt{\varepsilon_{2p_4}\varepsilon_{2p_2}}$ &  $-1 $ &  $ \varepsilon_{2p_4} $ &  $\varepsilon_{2p_4}\varepsilon_{2p_2}$ \\
					\hline
					
					\rsp $ \sqrt{\varepsilon_{2p_4}\varepsilon_{p_4p_2}}$ &  $ \varepsilon_{p_4p_2}$ &  $\varepsilon_{2p_4} $ &  $\varepsilon_{2p_4}\varepsilon_{p_4p_2}$ \\
					\hline 	
					
					\rsp $ \sqrt{\varepsilon_{2p_4}\varepsilon_{2p_3}}$ &  $ 1$ &  $\varepsilon_{2p_4}\varepsilon_{2p_3}$ &  $-\varepsilon_{2p_4}  $ \\
					\hline 	
					
					\rsp $ \sqrt{\varepsilon_{2p_4}\varepsilon_{p_4p_3}}$ &  $\varepsilon_{p_4p_3}$ &  $\varepsilon_{2p_4}\varepsilon_{p_4p_3}$ &  $ \varepsilon_{2p_4}$ \\
					\hline 	 	
					
					\rsp $  {\sqrt{\varepsilon_{2p_4}\varepsilon_{p_2p_3}}}$ &  $ \varepsilon_{p_2p_3}$ &  $-\varepsilon_{2p_4}$ &  $\varepsilon_{2p_4}$ \\
					\hline 	
					
				\end{tabular}$$
				\caption{Values of Norm Maps }
				\label{tabl33}
			\end{table}

			\noindent\ding{224}  Let us start	by applying   the norm map $N_{\KK^+/k_2}=1+\tau_2$.  We have:
			\begin{eqnarray*}
				N_{\KK^+/k_2}(\xi^2)&=&
				\varepsilon_{2p_4}^{2a} \cdot1 \cdot \varepsilon_{2p_4}^{c}\cdot \varepsilon_{2p_4}^{d}\cdot(\varepsilon_{2p_4}\varepsilon_{2p_3})^e\cdot(\varepsilon_{2p_4}\varepsilon_{p_4p_3})^f \cdot  \varepsilon_{2p_4}^g\\
				&=&	\varepsilon_{2p_4}^{2a}  (\varepsilon_{2p_4}\varepsilon_{2p_3})^e(\varepsilon_{2p_4}\varepsilon_{p_4p_3})^f  \varepsilon_{2p_4}^{c+d+g} .
			\end{eqnarray*}
			Thus $c+d+g\equiv 0\pmod 2$.  So we have:

			\noindent\ding{224} Let us   apply    the norm map $N_{\KK^+/k_1}=1+\tau_3$.  We have:
			\begin{eqnarray*}
				N_{\KK^+/k_1}(\xi^2)&=&
				\varepsilon_{2p_4}^{2a} \cdot1 \cdot (\varepsilon_{2p_4}\varepsilon_{2p_2})^c\cdot (\varepsilon_{ 2p_4}\varepsilon_{ p_4p_2}) ^d\cdot (-\varepsilon_{2p_4})^e  \cdot  \varepsilon_{2p_4}^f\cdot\varepsilon_{2p_4}^g\\
				&=&	\varepsilon_{2p_4}^{2a}   (\varepsilon_{2p_4}\varepsilon_{2p_2})^c (\varepsilon_{ 2p_4}\varepsilon_{ p_4p_2}) ^d (-1)^e \varepsilon_{2p_4}^{ e+f+g}.   
			\end{eqnarray*}
			Thus, $e =0$ and $f=g$. It follows that 	
			$$\xi^2=	\varepsilon_{2p_4}^a   \varepsilon_{p_2p_3}^b     \sqrt{\varepsilon_{ 2p_4}\varepsilon_{ 2p_2}}^c \sqrt{\varepsilon_{ 2p_4}\varepsilon_{ p_4p_2}}^d
			\sqrt{\varepsilon_{ 2p_4}\varepsilon_{ p_4p_3}}^f
			\sqrt{\varepsilon_{ 2p_4}\varepsilon_{\k}}^f.$$
			
			\noindent\ding{224}   Let us   apply    the norm map $N_{\KK^+/k_4}=1+\tau_1$ with $k_4= \QQ(\sqrt{p_4p_2}, \sqrt{p_4p_3}) $.
			\begin{eqnarray*}
				N_{\KK^+/k_4}(\xi^2)&=&		1 \cdot\varepsilon_{p_2p_3}^{2b}  \cdot(-1)^c\cdot( \varepsilon_{p_4p_2})^{d} \cdot   ( \varepsilon_{p_4p_3})^{f}\cdot 1 \\
				&=& 	\varepsilon_{p_2p_3}^{2b} (-1)^{c }\varepsilon_{p_4p_2}^{d}\varepsilon_{p_4p_3}^{f}.
			\end{eqnarray*}	
			So $c=0$   and $d=f$.    Therefore, we have:  
			$$\xi^2=	\varepsilon_{2p_4}^a   \varepsilon_{p_2p_3}^b      \sqrt{\varepsilon_{ 2p_4}\varepsilon_{ p_4p_2}}^d
			\sqrt{\varepsilon_{ 2p_4}\varepsilon_{ p_4p_3}}^d
			\sqrt{\varepsilon_{ 2p_4}\varepsilon_{\k}}^d.$$
			
			We conclude the result as in  the proof of the first item of Lemma	\ref{lemma2}.

			$$\sqrt{\varepsilon_{ 2p_4}\varepsilon_{ p_4p_2}} 
			\sqrt{\varepsilon_{ 2p_4}\varepsilon_{ p_4p_3}} 
			\sqrt{\varepsilon_{ 2p_4}\varepsilon_{\k}} =\sqrt{\varepsilon_{ 2p_4}^3\varepsilon_{ p_4p_2}\varepsilon_{ p_4p_3}\varepsilon_{\k}   } .$$

			\item  
			As a fundamental system of units of $\KK^+$ is given by
			$$\{ \varepsilon_{2p_4},  
			\sqrt{  \varepsilon_{2p_4}\varepsilon_{p_2p_3}}    , \sqrt{\varepsilon_{ 2p_4}\varepsilon_{ 2p_2}}, \sqrt{\varepsilon_{ 2p_4}\varepsilon_{ p_4p_2}},
			\sqrt{\varepsilon_{ 2p_4}\varepsilon_{ 2p_3}}, 
			\sqrt{\varepsilon_{ 2p_4}\varepsilon_{ p_4p_3}},   \sqrt[4]{\eta^2 \varepsilon_{ 2p_4}^3\varepsilon_{ p_4p_2}\varepsilon_{ p_4p_3}\varepsilon_{\k}  }   \},$$
			we consider
			$$\chi^2= \ell\varepsilon_{2p_4}^a  \sqrt{  \varepsilon_{2p_4}\varepsilon_{p_2p_3}}^b \sqrt{\varepsilon_{ 2p_4}\varepsilon_{ 2p_2}}^c \sqrt{\varepsilon_{ 2p_4}\varepsilon_{ p_4p_2}}^d
			\sqrt{\varepsilon_{ 2p_4}\varepsilon_{ 2p_3}}^e 
			\sqrt{\varepsilon_{ 2p_4}\varepsilon_{ p_4p_3}}^f  \sqrt[4]{\eta^2 \varepsilon_{ 2p_4}^3\varepsilon_{ p_4p_2}\varepsilon_{ p_4p_3}\varepsilon_{\k}  } ^g  $$
			where $a, b, c, d, e, f$ and $g$ are in $\{0, 1\}$ and let  $\ell$ be a positive square-free integer that is not divisible by $p_ip_j$ or $2p_i$ for $i \not= j$ and $i$, $j\in \{2,3, 4\}$.
			As in the proof of the second item of Lemma \ref{lemma2}, we have:
			
			\noindent\ding{224}  By applying   the norm map $N_{\KK^+/k_2}=1+\tau_2$, we get:
			\begin{eqnarray*}
				N_{\KK^+/k_2}(\chi^2)&=&\ell^2
				\varepsilon_{2p_4}^{2a} \cdot(-\varepsilon_{2p_4})^{b} \cdot \varepsilon_{2p_4}^{c}\cdot \varepsilon_{2p_4}^{d}\cdot(\varepsilon_{2p_4}\varepsilon_{2p_3})^e\cdot(\varepsilon_{2p_4}\varepsilon_{p_4p_3})^f \cdot  (-1)^{gv}\sqrt{ \eta'\varepsilon_{2p_4}^3\varepsilon_{p_4p_3}}^g\\
				&=&\ell^2	\varepsilon_{2p_4}^{2a}(\varepsilon_{2p_4}\varepsilon_{2p_3})^e (\varepsilon_{2p_4}\varepsilon_{p_4p_3})^f (-1)^{b+gv}\varepsilon_{2p_4}^{b+c+d +g }\sqrt{ \eta'\varepsilon_{2p_4}\varepsilon_{p_4p_3}}^g .
			\end{eqnarray*}
			here $\eta'\in\{1,\varepsilon_{2p_4}^2\}$. So $b+gv\equiv 0\pmod 2$
			and
			$b+c+d+g\equiv 0\pmod 2$. But if $q(k_2)=4$, then $g=0$ (since otherwise we get $q(k_2)\geq 8$). Thus, $b=0$ and  $c=d$.
			Therefore, we have:
			$$\chi^2= \ell\varepsilon_{2p_4}^a    \sqrt{\varepsilon_{ 2p_4}\varepsilon_{ 2p_2}}^c \sqrt{\varepsilon_{ 2p_4}\varepsilon_{ p_4p_2}}^c
			\sqrt{\varepsilon_{ 2p_4}\varepsilon_{ 2p_3}}^e 
			\sqrt{\varepsilon_{ 2p_4}\varepsilon_{ p_4p_3}}^f     .$$
			
			\noindent\ding{224}  Let us   apply    the norm map $N_{\KK^+/k_1}=1+\tau_3$.  We have;
			\begin{eqnarray*}
				N_{\KK^+/k_1}(\chi^2)&=&\ell^2
				\varepsilon_{2p_4}^{2a} \cdot(\varepsilon_{ 2p_4}\varepsilon_{ 2p_2})^{c} \cdot(\varepsilon_{2p_4}\varepsilon_{p_4p_2})^{c}\cdot   (-\varepsilon_{2p_4})^e\cdot\varepsilon_{2p_4}^f\\
				&=&\ell^2	\varepsilon_{2p_4}^{2a} (\varepsilon_{ 2p_4}\varepsilon_{ 2p_2})^{c} \cdot(\varepsilon_{2p_4}\varepsilon_{p_4p_2})^{d}(-1)^e \varepsilon_{2p_4}^{ e+f}   
			\end{eqnarray*}
			Thus $e=f=0$. Therefore, we have:
			$$\chi^2= \ell\varepsilon_{2p_4}^a    \sqrt{\varepsilon_{ 2p_4}\varepsilon_{ 2p_2}}^c \sqrt{\varepsilon_{ 2p_4}\varepsilon_{ p_4p_2}}^c
			.$$

			\noindent\ding{224}  Let us   apply    the norm map $N_{\KK^+/k_4}=1+\tau_1$ with $k_4= \QQ(\sqrt{p_4p_2}, \sqrt{p_4p_3}) $.
			\begin{eqnarray*}
				N_{\KK^+/k_5}(\chi^2)&=&\ell^2\cdot 	1 \cdot(-1)^c \cdot    ( \varepsilon_{p_4p_2})^c.
			\end{eqnarray*}
			So $c=0$. Therefore, we have:
			$$\chi^2= \ell\varepsilon_{2p_4}^a         .  $$
			By taking $\ell=2$, we get $2\varepsilon_{2p_4}$ is a square in $\KK^+$. So the follows by Lemma \ref{Lemmeazizi2}.
		\end{enumerate}
	\end{proof}

			The following corollary is a result of the second item of the     previous  proof and Lemmas \ref{Lemmeazizi2} and \ref{Lemme azizi}. 
	
	\begin{corollary}\label{lemma23corollary}Keep the same hypothesis of Lemma \ref{lemma23}.
		Let $\ell$ be a positive square-free integer that is not divisible by $p_ip_j$ or $2p_i$ for $i\not=j$ and  $i$, $j\in   \{2,3,4\}$, and $\KK_\ell=\KK^+(\sqrt{-\ell})$.	 By the  proof of the second item,  the set of seven elements $$     \{    \sqrt{  \varepsilon_{2p_4}\varepsilon_{p_2p_3}}    , \sqrt{\varepsilon_{ 2p_4}\varepsilon_{ 2p_2}}, \sqrt{\varepsilon_{ 2p_4}\varepsilon_{ p_4p_2}},
		\sqrt{\varepsilon_{ 2p_4}\varepsilon_{ 2p_3}}, 
		\sqrt{\varepsilon_{ 2p_4}\varepsilon_{ p_4p_3}}, \\ \sqrt[4]{\eta^2 \varepsilon_{ 2p_4}^3\varepsilon_{ p_4p_2}\varepsilon_{ p_4p_3}\varepsilon_{\k}  } ,
		\varepsilon \}  $$
		is a fundamental system of units of $\KK_\ell$. Here $\eta \in  \{1,\varepsilon_{2p_4},  \varepsilon_{p_2p_3} \}$ and $\varepsilon =\sqrt{\gamma\varepsilon_{2p_4}} \text{ or  } \varepsilon_{2p_4}$   according to whether (respectively) $\ell \in\{1, 2,  p_2 ,  p_3,p_4 \}$ or not,   with $\gamma=\zeta_4$ or $-1$ according to whether (respectively) $\ell=1$ or not.
	\end{corollary}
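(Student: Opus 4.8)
The plan is to piggyback on the computation already carried out in the second item of Lemma \ref{lemma23}, observing that it applies verbatim to an arbitrary admissible $\ell$, and then to feed the outcome into Lemmas \ref{Lemmeazizi2} and \ref{Lemme azizi}. Concretely, one writes a putative square $\chi^2=\ell\,\varepsilon_{2p_4}^a(\cdots)$ against the seven fundamental units of $\KK^+$ produced in the first item, and applies the norm maps $N_{\KK^+/k_2}$, $N_{\KK^+/k_1}$ and $N_{\KK^+/k_4}$. Since $\ell$ enters these norm identities only through the perfect square $\ell^2$, none of the parity conclusions depend on $\ell$; hence the exponents $b,c,d,e,f,g$ are all forced to vanish exactly as before, and the existence of a nontrivial unit $\varepsilon$ with $\ell\varepsilon$ a square in $\KK^+$ collapses to the single question of whether $\ell\,\varepsilon_{2p_4}$ is a square in $\KK^+$.

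First I would record, from the proof of Lemma \ref{lemma23}, that $2\varepsilon_{2p_4}$ is a square in $\KK^+$; combined with $\sqrt{2p_4}\in\KK^+$ this also makes $p_4\varepsilon_{2p_4}$ a square. Next I would exploit the multiquadratic structure: $\sqrt{m}\in\KK^+$ precisely when $m$ lies, modulo squares, in the group generated by $2p_2,2p_3,2p_4$, namely $\{1,2p_2,2p_3,2p_4,p_2p_3,p_2p_4,p_3p_4,2p_2p_3p_4\}$. Writing $\ell\,\varepsilon_{2p_4}=\tfrac{2\ell}{4}\,(2\varepsilon_{2p_4})$ exhibits $\ell\,\varepsilon_{2p_4}$ as a square in $\KK^+$ if and only if $\sqrt{2\ell}\in\KK^+$. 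Running $2\ell$ through the list and retaining only square-free $\ell$ not divisible by any $p_ip_j$ or $2p_i$ with $i,j\in\{2,3,4\}$, the only surviving solutions are $\ell\in\{2,p_2,p_3,p_4\}$; for every other admissible $\ell>1$ neither $\ell\varepsilon_{2p_4}$ nor $\ell$ itself is a square, so no such $\varepsilon$ exists.

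With this dichotomy the conclusion is immediate. For $\ell\neq 1$ we have $i\notin\KK_\ell$, so Lemma \ref{Lemmeazizi2} applies with $K_0=\KK^+$ and $\beta=\ell$: if $\ell\in\{2,p_2,p_3,p_4\}$ the unit $\varepsilon_{2p_4}$ witnesses the existence case and the seventh generator becomes $\sqrt{-\varepsilon_{2p_4}}$ (i.e. $\gamma=-1$), while otherwise we are in the non-existence case and the seventh generator is simply $\varepsilon_{2p_4}$. For $\ell=1$ we have $\KK_1=\KK^+(i)$ and must instead invoke Lemma \ref{Lemme azizi}; here the key is to pin down $n_0$: since $\sqrt2\notin\KK^+$ one checks $2$ is not in the relevant square class of $\KK_1$, so $\zeta_3\notin\KK_1$, whence $n_0=2$, $\mu_{n_0}=0$, and the condition $(2+\mu_{n_0})\varepsilon=2\varepsilon$ is again met by $\varepsilon=\varepsilon_{2p_4}$, yielding the seventh generator $\sqrt{\zeta_4\varepsilon_{2p_4}}$ (i.e. $\gamma=\zeta_4$). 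In every case the remaining six generators are inherited unchanged from the fundamental system of $\KK^+$, giving the stated list.

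I expect the only delicate points to be (i) confirming that the $\ell$-independence of the norm computations really reduces everything to the single element $\ell\varepsilon_{2p_4}$, i.e. that no product involving the other six generators can conspire with $\ell$ to yield a square, which is exactly what the reduction in Lemma \ref{lemma23} guarantees; and (ii) the bookkeeping of $n_0$ and of the primitive root $\zeta_{n_0}$ in the case $\ell=1$, where the shifted indexing of Lemma \ref{Lemme azizi} must be tracked carefully so as to land on $\gamma=\zeta_4$ rather than on $-1$.
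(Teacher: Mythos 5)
Your proposal is correct and follows essentially the same route as the paper: the paper's own justification is precisely that the norm-map reduction in the second item of the proof of Lemma \ref{lemma23} (where $\ell$ enters only through $\ell^2$) forces $\chi^2=\ell\varepsilon_{2p_4}^a$, after which Lemmas \ref{Lemmeazizi2} and \ref{Lemme azizi} yield the seventh generator. You merely make explicit what the paper leaves implicit, namely the square-class computation identifying $\{1,2,p_2,p_3,p_4\}$ as exactly the admissible $\ell$ for which $\ell\varepsilon_{2p_4}$ (resp. $2\varepsilon_{2p_4}$ when $\ell=1$) is a square in $\KK^+$, and the verification that $n_0=2$ in the case $\ell=1$.
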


	\begin{remark}\label{refremark} Let $p_1 $, $p_2$, $p_3$ and $p_4$ be  distinct  prime numbers satisfying the conditions of Lemma \ref{lemma23}. Notice that this is equivalent to 
		$$p_2\equiv 7\pmod 8,\ p_3\equiv p_4\equiv 3\pmod 8 \text{ and } \left(\dfrac{p_3}{p_2}\right)=\left(\dfrac{p_2}{p_4}\right)=\left(\dfrac{p_3}{p_4}\right)=-1.$$
	\end{remark}
	
	\bigskip
	
	We close this subsection with the following remark concerning the units of $\k^{1}$ and $\k^{1}_+$ for the cases when    $p_2=2$ or $p_3=2$.

	\bigskip
	\begin{remark}\label{remp2=2p3=2}Let  $p_1$, $p_2$, $p_3$ and $p_4$  be distinct prime numbers such that  $\{i, j, k\} = \{1, 2, 3\}$ and $p_i = 2$, $p_j\equiv p_k\equiv 3 \pmod 4$, and
		$$ \left(\dfrac{-p_1}{p_2}\right)=\left(\dfrac{-p_2}{p_3}\right)=\left(\dfrac{-p_3}{p_1}\right)=-1 \text{ and } \left(\dfrac{-p_1}{p_4}\right)=\left(\dfrac{-p_2}{p_4}\right)=\left(\dfrac{-p_3}{p_4}\right)=1.$$
		\begin{enumerate}[$1)$]
			\item Notice that for $p_2=2$,   we have  $p_3\equiv 7 \pmod   8 $ and  $p_1 \equiv p_4  \equiv 3 \pmod 8$ and 
			$$\left(\dfrac{ p_1}{p_3}\right)=\left(\dfrac{ p_1}{p_4}\right)=\left(\dfrac{ p_3}{p_4}\right)=-1.$$
			It follows  that the unit groups of 
			$\KK^+=\QQ(\sqrt{2p_1}, \sqrt{2p_3}, \sqrt{2p_4})$ and $\KK= \KK^+(\sqrt{-2})$ are given essentially in Lemma \ref{lemma23} by  changing  $p_2$ to $p_3$ and $p_3$ to $p_1$.

			\item Notice moreover that for   $p_3=2$, we have  $p_1\equiv 7 \pmod   8 $ and  $p_2 \equiv p_4  \equiv 3 \pmod 8$ and 
			$$\left(\dfrac{ p_2}{p_1}\right)=\left(\dfrac{ p_1}{p_4}\right)=\left(\dfrac{ p_2}{p_4}\right)=-1.$$
		  Then, similarly to what is in the previous item,  the unit groups of $\KK^+=\QQ(\sqrt{2p_1}, \sqrt{2p_2}, \sqrt{2p_4})$ and $\KK= \KK^+(\sqrt{-2})$
			are given essentially in Lemma \ref{lemma23} by changing  $p_2$ to $p_1$ and $p_3$ to $p_2$.

			\item Let $\ell $ be a positive square-free integer that is not divisible by $p_ip_j$.  When $p_2$ or $p_3=2$, one can similarly deduce the fundamental system of units of    $ \KK(\sqrt{-\ell})$ from  Corollary \ref{lemma23corollary}.
		\end{enumerate}
	\end{remark}

	\subsection{\bf  Units of $\k^1$ and $\k_+^{1}$ when $d$ Satisfies the Conditions $(d_1)$}\label{subsec4} $\; \\$
	
	Let   $p_1 \equiv p_2\equiv p_3 \equiv 3\pmod 4$  be distinct prime numbers. Put $\KK^+  = \QQ(\sqrt{ p_1}, \sqrt{ p_2 }, \sqrt{ p_3 }) $ and   $\KK= \QQ(\sqrt{p_1}, \sqrt{p_2}, \sqrt{p_3}, \sqrt{-1})$. 
	 We have:

	\begin{lemma} \label{lemmaa5}  
		Let   $p_1 \equiv p_2\equiv p_3 \equiv 3\pmod 4$ 
		be distinct prime numbers such that
		$$ \left(\dfrac{-p_1}{p_2}\right)=\left(\dfrac{-p_2}{p_3}\right)=-1 \text{ and } \left(\dfrac{-p_1}{p_3}\right)=\left(\dfrac{-p_1}{2}\right)=\left(\dfrac{-p_2}{2}\right)=\left(\dfrac{-p_3}{2}\right)=1.$$
		Then,  we have:
		\begin{eqnarray*} 
			\sqrt{\varepsilon_{p_1p_2p_3}}=\frac12( \alpha\sqrt{2}+ \beta\sqrt{2p_1p_2 p_3}) \text{ and } 2=  \alpha^2-p_1p_2p_3\beta^2  .
		\end{eqnarray*}
		Here  $\alpha$ and $\beta$ are   integers.
	\end{lemma}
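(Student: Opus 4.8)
The plan is to mirror the argument already used for Lemmas \ref{lemmaa} and \ref{lemmaa3}, adapting it to the field $\k=\QQ(\sqrt{p_1p_2p_3})$. Recall that for type $(d_1)$ the discriminant is $d_1d_2d_3\cdot(-4)=4p_1p_2p_3$, so $\varepsilon_\k=\varepsilon_{p_1p_2p_3}$. First I would record that, since $p_1\equiv3\pmod4$ divides the radicand, $N(\varepsilon_\k)=1$; writing $\varepsilon_\k=a+b\sqrt{p_1p_2p_3}$ with $a,b\in\ZZ$ (the ring of integers is $\ZZ[\sqrt{p_1p_2p_3}]$ because $p_1p_2p_3\equiv3\pmod4$), this gives $(a-1)(a+1)=p_1p_2p_3\,b^2$. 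I would also translate the hypotheses into usable data: each condition $\left(\frac{-p_i}{2}\right)=1$ forces $p_i\equiv7\pmod8$, whence $\left(\frac{2}{p_i}\right)=1$ and $\left(\frac{-2}{p_i}\right)=-1$; and, using $\left(\frac{-1}{p_i}\right)=-1$ together with quadratic reciprocity, the remaining hypotheses give $\gamma_{12}=\gamma_{23}=1$ and $\gamma_{13}=-1$, hence $\gamma_{21}=\gamma_{32}=-1$ and $\gamma_{31}=1$, where $\gamma_{ij}=\left(\frac{p_i}{p_j}\right)$ as before.

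Next I would run the unique-factorization analysis on $(a-1)(a+1)=p_1p_2p_3\,b^2$ exactly as in the earlier lemmas, splitting on $\gcd(a-1,a+1)$. In the case $a$ odd (gcd $2$) every admissible splitting has the shape $a\pm1=2\,(\text{subproduct of the }p_i)\,b_1^2$. Lemma \ref{lem2} kills the splitting $\{a\pm1=2b_1^2,\ a\mp1=2p_1p_2p_3b_2^2\}$ at once, since it would make $2(a\pm1)$ and $2p_1p_2p_3(a\mp1)$ perfect squares. The residual odd-case splittings $\{a\pm1=2p_ib_1^2,\ a\mp1=2p_jp_kb_2^2\}$ I would eliminate by reducing modulo $p_i,p_j,p_k$ and using $\left(\frac{\pm2}{p_s}\right)$ computed above: each choice forces a combination of the $\gamma$'s (for the sign $a+1=2p_ib_1^2$ one gets $\gamma_{ij}=\gamma_{ik}=1$ together with $\gamma_{ji}\gamma_{ki}=-1$, and for the opposite sign the negated set), and checking all three choices of the distinguished prime $p_i$ against the tabulated values shows that none is consistent. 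Hence $a$ is even.

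With $a$ even, $a-1$ and $a+1$ are coprime, $b$ is odd, and the only possible splittings are System~1, $\{a\pm1=b_1^2,\ a\mp1=p_1p_2p_3b_2^2\}$, and System~2, $\{a\pm1=p_ib_1^2,\ a\mp1=p_jp_kb_2^2\}$. The sign $a-1=b_1^2$ of System~1 is impossible: reducing modulo any $p_s$ gives $\left(\frac{a-1}{p_s}\right)=\left(\frac{-2}{p_s}\right)=-1$, while $a-1=b_1^2$ is a nonzero square mod $p_s$. System~2 forces exactly the same inconsistent $\gamma$-relations as in the odd case, so it too is excluded for every $i$ and every sign. The unique survivor is $a+1=b_1^2,\ a-1=p_1p_2p_3b_2^2$, whence $b=b_1b_2$ and
$$2\varepsilon_\k=(a+1)+(a-1)+2b_1b_2\sqrt{p_1p_2p_3}=(b_1+b_2\sqrt{p_1p_2p_3})^2.$$
Taking square roots yields $\sqrt{\varepsilon_\k}=\tfrac12(b_1\sqrt2+b_2\sqrt{2p_1p_2p_3})$ and $2=(a+1)-(a-1)=b_1^2-p_1p_2p_3b_2^2$, which is the claim with $\alpha=b_1$, $\beta=b_2$. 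The main obstacle is bookkeeping rather than ideas: one must organize the complete list of factorization systems and verify, for every distinguished prime $p_i$ and every sign, that the forced quadratic-residue relations contradict the values of $\gamma_{ij}$. The one delicate input driving all the eliminations is the congruence $p_i\equiv7\pmod8$ extracted from the hypotheses, which is precisely what makes $\left(\frac{-2}{p_i}\right)=-1$.
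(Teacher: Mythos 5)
Your proposal is correct and follows essentially the same route as the paper's own proof: factor $a^2-1=p_1p_2p_3b^2$ via unique factorization and Lemma \ref{lem2}, eliminate every splitting except $a+1=b_1^2$, $a-1=p_1p_2p_3b_2^2$ by reducing modulo the $p_s$ and using $p_s\equiv 7\pmod 8$ (so $\left(\frac{-2}{p_s}\right)=-1$) together with the values of $\gamma_{ij}$, then sum and subtract to extract $\sqrt{2\varepsilon_\k}$. The only cosmetic difference is that you organize the cases by the parity of $a$ and dispose of the $a\pm1=2b_1^2$ splitting separately, whereas the paper folds the factor-of-$2$ systems into the same residue-symbol elimination by writing $n\in\{1,2\}$ and using $\left(\frac{2}{p_s}\right)=1$.
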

	\begin{proof}Let $s\in \{1,2,3\}$. Note that our conditions  are equivalent to $p_s\equiv 7\pmod 8$    and $ \left(\dfrac{p_1}{p_2}\right)=\left(\dfrac{p_2}{p_3}\right)=1$ and $\left(\dfrac{ p_1}{p_3}\right)=-1$.
		Put $\{1,2,3\}=\{i,j,k\}$ and $\varepsilon_{p_1p_2p_3}=a+b\sqrt{p_1p_2p_3}$ with $a$ and $b$  integers. Since $ N(\varepsilon_{p_1p_2p_3})=1 $,    the unique factorization  of $ a^{2}-1=p_1p_2p_3b^{2} $ in $ \mathbb{Z} $ and Lemma \ref{lem2}  imply that there exist $b_1$ and $b_2$ in $\mathbb{Z}$ such that  we have exactly one of the following systems:
		$$(1):\ \left\{ \begin{array}{ll}
			a\pm1=b_1^2\\
			a\mp1=p_1 p_2p_3b_2^2,
		\end{array}\right.  \quad
		(2):\ \left\{ \begin{array}{ll}
			a\pm1=p_ib_1^2\\
			a\mp1=p_jp_kb_2^2,
		\end{array}\right.  \quad
		(3):\      \left\{ \begin{array}{ll}
			a\pm1=2p_ib_1^2\\
			a\mp1=2p_jp_kb_2^2.
		\end{array}\right.
		$$
		Here $b_1$ and $b_2$ are  integers   such that $b=b_1b_2$ (resp. $b=2b_1b_2$) in $(1)$ and $(2)$ (resp. $(3)$). 
		\begin{enumerate}[\rm$\bullet$]
			\item  Assume that we are in the case of Systems $(2)$ and $(3)$.
			\begin{enumerate}[\rm$\star$]
				\item	Let $i\in\{ 1,2  \}$. Assume that we have    $\left\{ \begin{array}{ll}
					a\pm1=np_ib_1^2\\
					a\mp1=np_jp_kb_2^2,
				\end{array}\right.$ with $n\in \{1,2\}$.
				
				We have:
				\[(-1)^i=\left(\dfrac{p_i }{p_3}\right)=\left(\dfrac{np_i }{p_3}\right)=\left(\dfrac{a\pm1}{p_3}\right)=\left(\dfrac{a\mp1\pm2}{p_3}\right)=\left(\dfrac{p_jp_kb_2^2\pm2}{p_3}\right)=\left(\dfrac{\pm1}{p_3}\right),
				\]
				and for $s\not\in \{i,3\}$, we have:
				
				\[(-1)^{i-1}=\left(\dfrac{p_i }{p_s}\right)=\left(\dfrac{np_i }{p_s}\right)=\left(\dfrac{a\pm1}{p_s}\right)=\left(\dfrac{a\mp1\pm2}{p_s}\right)=\left(\dfrac{p_jp_kb_2^2\pm2}{p_s}\right)=\left(\dfrac{\pm1}{p_s}\right).
				\]	
				
				\item Now, let $i=3$, which means that we are in the case of $\left\{ \begin{array}{ll}
					a\pm1=np_3b_1^2\\
					a\mp1=np_1p_2b_2^2.
				\end{array}\right.$
				As above, we check that we have $1=\left(\dfrac{p_3}{p_1}\right)=\left(\dfrac{a\pm1}{p_1}\right)=  \left(\dfrac{\pm1}{p_1}\right)$ and $-1=\left(\dfrac{p_3}{p_2}\right)=\left(\dfrac{a\pm1}{p_2}\right)=  \left(\dfrac{\pm1}{p_2}\right)=  \left(\dfrac{\pm1}{p_1}\right)$ which gives a contradiction.
			\end{enumerate}
			So Systems $(2)$ and $(3)$ are eliminated.
			\item Now assume that we have $   \left\{ \begin{array}{ll}
				a-1= b_1^2\\
				a+1=p_1p_2p_3b_2^2.
			\end{array}\right. 
			$
			
			We have  $1=\left(\dfrac{b_1^2}{p_1}\right)=\left(\dfrac{a-1}{p_1}\right)=  \left(\dfrac{-2}{p_1}\right)=-1$, which is a contradiction.
		\end{enumerate}
		Therefore, all systems are eliminated except the following: 
		$$   \left\{ \begin{array}{ll}
			a+1= b_1^2\\
			a-1=p_1p_2p_3b_2^2.
		\end{array}\right.     $$
		So by summing and substracting the equations in this system, we get the result by letting $\alpha=b_1$ and $\beta=b_2$.
	\end{proof}
	
	\begin{remark}  
		We note that the result in Lemma \ref{lemmaa5} can also be obtained using the technique in the proof of \cite[Lemma 7]{BenSnyder25PartII}.
	\end{remark}
	
	\begin{lemma} \label{lemmad11}   
		Let   $p_1 \equiv p_2\equiv p_3 \equiv 3\pmod 4$ 
		be distinct prime numbers such that  
		$$ \left(\dfrac{-p_1}{p_2}\right)=\left(\dfrac{-p_2}{p_3}\right)=-1 \text{ and } \left(\dfrac{-p_1}{p_3}\right)=\left(\dfrac{-p_1}{2}\right)=\left(\dfrac{-p_2}{2}\right)=\left(\dfrac{-p_3}{2}\right)=1.$$
		Put $\KK^+=\QQ(\sqrt{p_1}, \sqrt{p_2}, \sqrt{p_3})$ and   $\KK =\QQ(\sqrt{p_1}, \sqrt{p_2}, \sqrt{p_3},\sqrt{-1})$.	Then, we have:
		\begin{enumerate}[\rm $1)$]
			\item The unit group of $\KK^+$ is :
			\begin{eqnarray*}
				E_{\KK^+}=\langle-1, \varepsilon_{p_1}  , \sqrt{\varepsilon_{ p_1p_2}}, \sqrt{\varepsilon_{p_1}\varepsilon_{ p_2}} ,  \sqrt{\varepsilon_{p_1 p_3}}, \sqrt{\varepsilon_{p_1}\varepsilon_{ p_3}}  ,    \sqrt{\varepsilon_{p_2p_3}}, \sqrt[4]{\eta^2     \varepsilon_{p_1}^3\varepsilon_{ p_2} \varepsilon_{ p_3} \varepsilon_{p_1p_2p_3}} \ \rangle.
			\end{eqnarray*}

			\item The unit group of $\KK $ is :
			\begin{eqnarray*}E_{\KK}=\langle\zeta_4,   \sqrt{\varepsilon_{ p_1p_2}}, \sqrt{\varepsilon_{p_1}\varepsilon_{ p_2}} ,  \sqrt{\varepsilon_{p_1 p_3}}, \sqrt{\varepsilon_{p_1}\varepsilon_{ p_3}}  ,    \sqrt{\varepsilon_{p_2p_3}}, \sqrt[4]{\eta^2     \varepsilon_{p_1}^3\varepsilon_{ p_2} \varepsilon_{ p_3} \varepsilon_{p_1p_2p_3}},  \sqrt{\zeta_4\varepsilon_{p_1}}  \ \rangle
			\end{eqnarray*}
		\end{enumerate}
		Here  $\eta$ is an element of $ \{1,\varepsilon_{p_1} \}$.
		
	\end{lemma}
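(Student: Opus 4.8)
The plan is to follow verbatim the strategy of Lemmas \ref{lemma1}, \ref{lemma2} and \ref{lemma23}: first compute $E_{\KK^+}$ by Wada's method, then pass to $\KK=\KK^+(\sqrt{-1})$ by Lemma \ref{Lemme azizi}. For the first item I would take the three biquadratic subfields $k_1=\QQ(\sqrt{p_1},\sqrt{p_2})$, $k_2=\QQ(\sqrt{p_1},\sqrt{p_3})$ and $k_3=\QQ(\sqrt{p_1},\sqrt{p_2p_3})$, fixed respectively by $\tau_3$, $\tau_2$ and $\tau_2\tau_3$. Here every $\sqrt{\varepsilon_{p_ip_j}}=\tfrac12(a\sqrt{p_i}+b\sqrt{p_j})$ already lies in $\KK^+$ by Lemma \ref{BenjSnyLemma}(1); moreover, writing $\sqrt{\varepsilon_{p_i}}=\tfrac12(a\sqrt{2p_i}+b\sqrt2)$ as in Lemma \ref{BenjSnyLemma}(3), the product $\sqrt{\varepsilon_{p_1}}\sqrt{\varepsilon_{p_2}}$ has its $\sqrt2$ cancel and lands in $k_1$, and likewise $\sqrt{\varepsilon_{p_1}}\sqrt{\varepsilon_{p_3}}\in k_2$; finally the shape of $\sqrt{\varepsilon_{\k}}$ supplied by Lemma \ref{lemmaa5} makes $\sqrt{\varepsilon_{p_1}\varepsilon_{\k}}\in k_3$ for the same reason. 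This gives $E_{k_1}=\langle-1,\varepsilon_{p_1},\sqrt{\varepsilon_{p_1p_2}},\sqrt{\varepsilon_{p_1}\varepsilon_{p_2}}\rangle$, $E_{k_2}=\langle-1,\varepsilon_{p_1},\sqrt{\varepsilon_{p_1p_3}},\sqrt{\varepsilon_{p_1}\varepsilon_{p_3}}\rangle$ and $E_{k_3}=\langle-1,\varepsilon_{p_1},\varepsilon_{p_2p_3},\sqrt{\varepsilon_{p_1}\varepsilon_{\k}}\rangle$, so $q(k_1)=q(k_2)=4$ and $q(k_3)=2$.

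Next, as in the earlier proofs I would write a general $\xi\in\KK^+$ with $\xi^2\in E_{k_1}E_{k_2}E_{k_3}$ as a monomial in these generators with exponents in $\{0,1\}$, and kill the exponents one at a time by applying the relative norms $N_{\KK^+/k_i}=1+\tau_j$. For this I must first tabulate $\varepsilon^{1+\tau_j}$ for each generator (the analogue of Tables \ref{reftab} and \ref{tabl3}); the signs are controlled by the symbols $\gamma_{ij}=(p_i/p_j)$, which the hypotheses fix via Lemma \ref{lemmaa5} (each $p_i\equiv7\pmod 8$, with $(p_1/p_2)=(p_2/p_3)=1$ and $(p_1/p_3)=-1$). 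The sign chase should leave only the relation $\xi^2=\eta\,\sqrt{\varepsilon_{p_1}\varepsilon_{p_2}}\,\sqrt{\varepsilon_{p_1}\varepsilon_{p_3}}\,\sqrt{\varepsilon_{p_1}\varepsilon_{\k}}$, the solvable $\xi^2=\varepsilon_{p_2p_3}$ (which yields $\sqrt{\varepsilon_{p_2p_3}}\in\KK^+$ directly), and the unsolvable $\xi^2=\varepsilon_{p_1}^a\varepsilon_{p_2p_3}^b$ with $a\neq b$; note that because $\varepsilon_{p_2p_3}$ is itself a square in $\KK^+$ the choice $\eta=\varepsilon_{p_2p_3}$ collapses to $\eta=1$, so $\eta\in\{1,\varepsilon_{p_1}\}$. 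To decide solvability I invoke the Kuroda--Wada class number formula (Lemma \ref{wada's f.}): with $h_2(p_i)=h_2(p_ip_j)=1$, $h_2(p_1p_2p_3)=4$ and $v=9$ one gets $h_2(\KK^+)=q(\KK^+)/2^7$, and $h_2(\KK^+)=h_2(\k^1)=1$ forces $q(\KK^+)=2^7$. Since $E_{k_1}E_{k_2}E_{k_3}$ together with the free square root $\sqrt{\varepsilon_{p_2p_3}}$ account only for an index $2^6$, the remaining relation must be solvable, i.e.\ $\eta\,\varepsilon_{p_1}^3\varepsilon_{p_2}\varepsilon_{p_3}\varepsilon_{\k}$ is a fourth power in $\KK^+$ for some $\eta\in\{1,\varepsilon_{p_1}\}$, producing the last generator $\sqrt[4]{\eta^2\varepsilon_{p_1}^3\varepsilon_{p_2}\varepsilon_{p_3}\varepsilon_{p_1p_2p_3}}$; this is item (1).

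For item (2) I would apply Lemma \ref{Lemme azizi} to $\KK=\KK^+(\sqrt{-1})$. As $\KK^+$ is totally real and does not contain $\sqrt2$, the field $\KK$ contains $i$ but no primitive $8$th root of unity, so $n_0=2$, $\zeta_{n_0}=i$ and $2+\mu_{n_0}=2$. It then remains to exhibit a unit $\varepsilon$ of $\KK^+$ with $2\varepsilon$ a square; reading $\sqrt{\varepsilon_{p_1}}=\tfrac12(a\sqrt{2p_1}+b\sqrt2)$ off Lemma \ref{BenjSnyLemma}(3) gives the identity $2\varepsilon_{p_1}=(b+a\sqrt{p_1})^2$, so $2\varepsilon_{p_1}$ is a square already in $\QQ(\sqrt{p_1})\subset\KK^+$. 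Taking $\varepsilon=\varepsilon_{p_1}$ as the distinguished fundamental unit, Lemma \ref{Lemme azizi} keeps the other six generators and replaces $\varepsilon_{p_1}$ by $\sqrt{\zeta_4\varepsilon_{p_1}}$ (the torsion becoming $\langle\zeta_4\rangle$), which is exactly the asserted $E_{\KK}$.

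The main obstacle is the norm bookkeeping of the middle paragraph for the mixed unit $\sqrt{\varepsilon_{p_1}\varepsilon_{\k}}$: its inner square root $\sqrt{\varepsilon_{\k}}$ involves $\sqrt2$ and $\sqrt{2p_1p_2p_3}$ and so does not itself lie in $\KK^+$, and, exactly as in the proof of Lemma \ref{lemma2}, one must clear the $\sqrt2$ by multiplying through by a suitable radical and using $\sqrt{2p_1p_2p_3}^{\,\tau_i}=-\sqrt{2p_1p_2p_3}$ before the norms collapse to units of the subfields. Everything else is the routine (if lengthy) sign chase already carried out in Subsections \ref{subsec1}--\ref{subsec3}; the one genuinely arithmetic input is the identity $q(\KK^+)=2^7$, which is what upgrades the surviving square root to the fourth root.
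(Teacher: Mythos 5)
Your proposal follows the paper's own proof essentially step for step: the same three subfields $k_1=\QQ(\sqrt{p_1},\sqrt{p_2})$, $k_2=\QQ(\sqrt{p_1},\sqrt{p_3})$, $k_3=\QQ(\sqrt{p_1},\sqrt{p_2p_3})$, the same unit groups $E_{k_i}$ obtained from Lemmas \ref{BenjSnyLemma} and \ref{lemmaa5}, the same norm-map sign chase, the same use of the Kuroda formula (with $v=9$ and $h_2(p_1p_2p_3)=4$) to force $q(\KK^+)=2^7$ and upgrade the surviving square root to a fourth root, and the same application of Lemma \ref{Lemme azizi} with $n_0=2$ and $2\varepsilon_{p_1}$ a square via Lemma \ref{BenjSnyLemma}(3). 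The approach and all key inputs coincide with the paper's argument.
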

	\begin{proof}
		\begin{enumerate}[\rm $1)$]
			\item    
			Let	
			$k_1 = \QQ(\sqrt{p_1}, \sqrt{p_2}) ,$ 
			$k_2 = \QQ(\sqrt{p_1}, \sqrt{p_3})  $  and
			$k_3 =\QQ(\sqrt{p_1}, \sqrt{p_2p_3})$.	Using  Lemmas \ref{BenjSnyLemma} and \ref{lemmaa5}, we check that we have: 
			$$  
			E_{k_1}=\langle -1,  \varepsilon_{p_1} , \sqrt{\varepsilon_{ p_1p_2}}, \sqrt{\varepsilon_{p_1}\varepsilon_{ p_2}}\rangle, \quad  
			E_{k_2}=\langle -1,  \varepsilon_{p_1} , \sqrt{\varepsilon_{p_1 p_3}}, \sqrt{\varepsilon_{p_1}\varepsilon_{ p_3}}\rangle $$
			$$\text{ and }E_{k_3}=\langle -1,  \varepsilon_{p_1} , \varepsilon_{p_2p_3}, \sqrt{\varepsilon_{p_1}\varepsilon_{p_1p_2p_3}}\rangle.$$
			Therefore, we have:  \begin{eqnarray*}
				E_{k_1}E_{k_2}E_{k_3}=\langle-1,    \varepsilon_{p_1} ,\varepsilon_{p_2p_3}, \sqrt{\varepsilon_{ p_1p_2}}, \sqrt{\varepsilon_{p_1}\varepsilon_{ p_2}} ,  \sqrt{\varepsilon_{p_1 p_3}}, \sqrt{\varepsilon_{p_1}\varepsilon_{ p_3}}  ,   \sqrt{\varepsilon_{p_1}\varepsilon_{p_1p_2p_3}}\rangle.
			\end{eqnarray*}	 	
			Let  $\xi$ be an element of $\KK^+$ which is the  square root of an element of $E_{k_1}E_{k_2}E_{k_3}$. 
			Notice that $\varepsilon_{p_2p_3}$ is a square in $\KK^+$.		
			Therefore,  $q(\KK^+)\geq6$ and  we can assume that
			$$\xi^2=	\varepsilon_{p_1}^a \sqrt{\varepsilon_{ p_1p_2}}^b \sqrt{\varepsilon_{p_1}\varepsilon_{ p_2}}^c  \sqrt{\varepsilon_{p_1 p_3}}^d \sqrt{\varepsilon_{p_1}\varepsilon_{ p_3}}^e   \sqrt{\varepsilon_{p_1}\varepsilon_{p_1p_2p_3}}^f.$$
			where $a,   b, c, d, e$ and $f$ are in $\{0, 1\}$. We have the following table (cf. Table \ref{tabl5}):
			\begin{table}[H]
				$$
				\begin{tabular}{|c|c|c|c|c|c|c|c|}
					\hline\rsp	$\varepsilon $ & $\varepsilon^{1+\tau_1} $ & $\varepsilon^{1+\tau_2}$ & $\varepsilon^{1+\tau_3}$ \\
					\hline
					
					\rsp $ \sqrt{\varepsilon_{p_1p_2}}$ &  $-1$ &  $1$ &  $\varepsilon_{p_1p_2}$ \\
					\hline
					
					\rsp $ \sqrt{\varepsilon_{p_1}\varepsilon_{p_2}}$ &  $\varepsilon_{p_2} $ &  $ \varepsilon_{p_1} $ &  $\varepsilon_{p_1}\varepsilon_{p_2}$ \\
					\hline
					
					\rsp $ \sqrt{\varepsilon_{p_1p_3}}$ &  $ 1$ &  $\varepsilon_{p_1p_3} $ &  $-1$ \\
					\hline 	
					
					\rsp $ \sqrt{\varepsilon_{p_1}\varepsilon_{p_3}}$ &  $ \varepsilon_{p_3}$ &  $\varepsilon_{p_1}\varepsilon_{p_3}$ &  $\varepsilon_{p_1}$ \\
					\hline 	
					
					\rsp $ \sqrt{\varepsilon_{p_1}\varepsilon_{p_1p_2p_3}}$ &  $1$ &  $\varepsilon_{p_1}$ &  $ \varepsilon_{p_1}$ \\
					\hline 	 	
					
					\rsp $  \sqrt{\varepsilon_{p_2p_3}}$ &  $\varepsilon_{p_2p_3}$ &  $-1$ &  $1$ \\
					\hline 	
					
				\end{tabular}$$
				\caption{Values of Norm Maps }
				\label{tabl5}
			\end{table}

			\noindent\ding{224}  Let us start	by applying   the norm map $N_{\KK^+/k_2}=1+\tau_2$.  We have:
			\begin{eqnarray*}
				N_{\KK^+/k_2}(\xi^2)&=&
				\varepsilon_{p_1}^{2a}  \cdot 1\cdot \varepsilon_{p_1}^{c}\cdot(\varepsilon_{p_1p_3})^d\cdot(\varepsilon_{p_1}\varepsilon_{p_3})^e \cdot  \varepsilon_{p_1}^f\\
				&=&	\varepsilon_{p_1}^{2a}  (\varepsilon_{p_1p_3})^d (\varepsilon_{p_1}\varepsilon_{p_3})^e  \varepsilon_{p_1}^{c+f} .
			\end{eqnarray*}
			Thus $c=f$.  So we have: 
			$$\xi^2=	\varepsilon_{p_1}^a  \sqrt{\varepsilon_{ p_1p_2}}^b \sqrt{\varepsilon_{p_1}\varepsilon_{ p_2}}^c  \sqrt{\varepsilon_{p_1 p_3}}^d \sqrt{\varepsilon_{p_1}\varepsilon_{ p_3}}^e   \sqrt{\varepsilon_{p_1}\varepsilon_{p_1p_2p_3}}^c.$$
			
			\noindent\ding{224} Let us   apply    the norm map $N_{\KK^+/k_1}=1+\tau_3$.  We have:
			\begin{eqnarray*}
				N_{\KK^+/k_1}(\xi^2)&=&
				\varepsilon_{p_1}^{2a}  \cdot (\varepsilon_{p_1p_2})^b\cdot (\varepsilon_{p_1}\varepsilon_{p_2})^c\cdot (-1)^d  \cdot  \varepsilon_{p_1}^e\cdot\varepsilon_{p_1}^c\\
				&=&	\varepsilon_{p_1}^{2a}   (\varepsilon_{p_1p_2})^b (\varepsilon_{p_1}\varepsilon_{p_2})^c  (-1)^d \varepsilon_{p_1}^{e+c}.   
			\end{eqnarray*}
			Thus $d=0$ and $e=c$. So we have: 
			$$\xi^2=	\varepsilon_{p_1}^a  \sqrt{\varepsilon_{ p_1p_2}}^b \sqrt{\varepsilon_{p_1}\varepsilon_{ p_2}}^c    \sqrt{\varepsilon_{p_1}\varepsilon_{ p_3}}^c   \sqrt{\varepsilon_{p_1}\varepsilon_{p_1p_2p_3}}^c.$$

			\noindent\ding{224}   Let us   apply    the norm map $N_{\KK^+/k_4}=1+\tau_1$ with $k_4= \QQ(\sqrt{p_2}, \sqrt{p_3}) $.
			\begin{eqnarray*}
				N_{\KK^+/k_4}(\xi^2)&=&		1    \cdot(-1)^b\cdot( \varepsilon_{p_2})^{c} \cdot   ( \varepsilon_{p_3})^{c}\cdot 1 \\
				&=& 	 (-1)^{b}\varepsilon_{p_2}^{c}\varepsilon_{p_3}^{c}.
			\end{eqnarray*}
			So $b=0$. Therefore, 
			$$\xi^2=	\varepsilon_{p_1}^a    \sqrt{\varepsilon_{p_1}\varepsilon_{ p_2}}^c    \sqrt{\varepsilon_{p_1}\varepsilon_{ p_3}}^c   \sqrt{\varepsilon_{p_1}\varepsilon_{p_1p_2p_3}}^c.$$
			
			According to  \cite[Proposition 7]{BenLemSnyderJNT1998}, the Hilbert $2$-class field tower of $k=\QQ(\sqrt{p_1p_2p_3})$ stops at the first layer and $h_2(p_1p_2p_3)=4$.   So we have
			$h_2(\KK^+)=h_2(\k^{1})=1$. On the other hand, as $ h_2(p_i)= h_2(p_ip_j)=1$ (cf. \cite[Corollary 3.8]{connor88}), the class number formula (c.f. Lemma \ref{wada's f.}) gives
			$h_2(\KK^+)=\frac{1}{2^7}q(\KK^+)$. Thus, 
			$q(\KK^+)=2^7$. Notice that $\varepsilon_{p_1}$ is not a square in $\KK^+$. It follows that there is only one equation that can admits solutions in $\KK^+$, which is
			$\xi^2=	\eta    \sqrt{\varepsilon_{p_1}\varepsilon_{ p_2}}     \sqrt{\varepsilon_{p_1}\varepsilon_{ p_3}}    \sqrt{\varepsilon_{p_1}\varepsilon_{p_1p_2p_3}}  $,    
			where $\eta $ in an element in $ \{1,\varepsilon_{p_1} \}$.    So we obtain the result of the first item.

			\item 
			We use Lemma \ref{Lemme azizi} to get the unit group of $\KK$. As a fundamental system of units of $\KK^+$ is given by
			$$\{ \varepsilon_{p_1}  , \sqrt{\varepsilon_{ p_1p_2}}, \sqrt{\varepsilon_{p_1}\varepsilon_{ p_2}} ,  \sqrt{\varepsilon_{p_1 p_3}}, \sqrt{\varepsilon_{p_1}\varepsilon_{ p_3}}  ,    \sqrt{\varepsilon_{p_2p_3}}, \sqrt[4]{\eta^2     \varepsilon_{p_1}^3\varepsilon_{ p_2} \varepsilon_{ p_3} \varepsilon_{p_1p_2p_3}}   \},$$
			we consider
			\begin{eqnarray}\label{frm1}
				\chi^2= \ell\varepsilon_{p_1}^a \sqrt{\varepsilon_{ p_1p_2}}^b \sqrt{\varepsilon_{p_1}\varepsilon_{ p_2}}^c  \sqrt{\varepsilon_{p_1 p_3}}^d \sqrt{\varepsilon_{p_1}\varepsilon_{ p_3}}^e    \sqrt{\varepsilon_{p_2p_3}}^f \sqrt[4]{\eta^2     \varepsilon_{p_1}^3\varepsilon_{ p_2} \varepsilon_{ p_3} \varepsilon_{p_1p_2p_3}}^g   
			\end{eqnarray}
			with  $a, b, c, d, e, f$ and $g$ are in $\{0, 1\}$ and $\ell$ a positive square-free integer coprime with $p_i$ and $p_ip_j$ for $i\not=j$ and  $i$, $j\in \{1,2,3\}$. As in the proof of the second item of Lemma \ref{lemma2}, we have:
			
			\noindent\ding{224}  By applying   the norm map $N_{\KK^+/k_2}=1+\tau_2$, we get:
			\begin{eqnarray*}
				N_{\KK^+/k_2}(\chi^2)&=&
				\ell^2\cdot \varepsilon_{p_1}^{2a}   \cdot1\cdot\varepsilon_{p_1}^c\cdot\varepsilon_{p_1p_3}^d \cdot(\varepsilon_{p_1}\varepsilon_{p_3})^e  \cdot (-1)^f\cdot(-1)^{gv} \varepsilon_{p_1}^{g} \sqrt{\varepsilon_{p_1}\varepsilon_{p_3}}^g\\
				&=&	\ell^2\cdot \varepsilon_{p_1}^{2a}(\varepsilon_{p_1p_3})^d (\varepsilon_{p_1}\varepsilon_{p_3})^e(-1)^{f+vg}\varepsilon_{p_1}^{c+g} \sqrt{\varepsilon_{p_1}\varepsilon_{p_3}}^g ,
			\end{eqnarray*}
			with $v\in\{0,1\}$. Thus $f+vg\equiv 0\pmod2$.  Since $q(k_2)=4$, we have $g=0$. In fact, otherwise we get $q(k_2)\geq 8$. So $f=c=0$ and we have:
			$$\chi^2= \ell\varepsilon_{p_1}^a \sqrt{\varepsilon_{ p_1p_2}}^b    \sqrt{\varepsilon_{p_1 p_3}}^d \sqrt{\varepsilon_{p_1}\varepsilon_{ p_3}}^e        $$
			\noindent\ding{224} Let us   apply    the norm map $N_{\KK^+/k_1}=1+\tau_3$.  We have:
			\begin{eqnarray*}
				N_{\KK^+/k_1}(\chi^2)&=&
				\ell^2\cdot	\varepsilon_{p_1}^{2a}  \cdot (\varepsilon_{p_1p_2})^b \cdot (-1)^d  \cdot  \varepsilon_{p_1}^e 
			\end{eqnarray*}
			Thus $d=0$ and $e=0$. Therefore, 
			$$\chi^2= \ell\varepsilon_{p_1}^a \sqrt{\varepsilon_{ p_1p_2}}^b            $$
			
			\noindent\ding{224}  By applying    the norm map $N_{\KK^+/k_4}=1+\tau_1$ with $k_4= \QQ(\sqrt{p_2}, \sqrt{p_3}) $, we get:
			\begin{eqnarray*}
				N_{\KK^+/k_4}(\chi^2)&=&\ell^2	 \cdot	1    \cdot(-1)^b  
			\end{eqnarray*}
			Thus $b=0$ and $\chi^2= \ell\varepsilon_{p_1}^a $.  According to Lemma  \ref{BenjSnyLemma}-(3),  this equation   admits solutions in $\KK^+$ if and only if $\ell =2$.
			Hence,  we have the result by Lemma \ref{Lemme azizi}.
		\end{enumerate}
	\end{proof}

	The following corollary is a result of the second item of the     previous  proof and Lemma \ref{Lemmeazizi2}. 
	
	\begin{corollary}\label{lemmad11corollary}Keep the same hypothesis of Lemma \ref{lemmad11}.
		Let $\ell\not=1$ be a positive square-free integer and $\KK_\ell =\KK^+(\sqrt{-\ell})$.  By the  proof of the second item,  the set of seven elements 
		$$     \{     \varepsilon_{p_1}  , \sqrt{\varepsilon_{ p_1p_2}}, \sqrt{\varepsilon_{p_1}\varepsilon_{ p_2}} ,  \sqrt{\varepsilon_{p_1 p_3}}, \sqrt{\varepsilon_{p_1}\varepsilon_{ p_3}}  ,    \sqrt{\varepsilon_{p_2p_3}}, \sqrt[4]{\eta^2     \varepsilon_{p_1}^3\varepsilon_{ p_2} \varepsilon_{ p_3} \varepsilon_{p_1p_2p_3}}, \varepsilon \} $$
		is a fundamental system of units of $\KK_\ell$.  Here $\varepsilon= \sqrt{-\varepsilon_{p_1}}$   or $\varepsilon_{p_1}$ according to whether (respectively) $\ell=2$ or not.
	\end{corollary}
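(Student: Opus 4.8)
The plan is to mirror the reduction carried out in the proof of the second item of Lemma~\ref{lemmad11}, replacing the role of $\sqrt{-1}$ there by $\sqrt{-\ell}$ and invoking Lemma~\ref{Lemmeazizi2} (Azizi's Proposition~3) in place of Lemma~\ref{Lemme azizi}. First I would check that the hypotheses of Lemma~\ref{Lemmeazizi2} are met with $K_0=\KK^+$, $\beta=\ell$ and $K=\KK_\ell=\KK^+(\sqrt{-\ell})$: the field $\KK_\ell$ is multiquadratic, hence abelian over $\QQ$, and it is a CM field with maximal real subfield $\KK^+$, so $i\in\KK_\ell$ would force $\sqrt{\ell}\in\KK_\ell\cap\RR=\KK^+$. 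Since for a square-free integer $m$ one has $\sqrt m\in\KK^+$ only when $m\in\{1,p_1,p_2,p_3,p_1p_2,p_1p_3,p_2p_3,p_1p_2p_3\}$, taking $\ell\neq1$ coprime to the $p_i$ guarantees $i\notin\KK_\ell$, as required.

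Next I would take $\{\varepsilon_1,\dots,\varepsilon_7\}$ to be the fundamental system of $\KK^+$ produced in the first item of Lemma~\ref{lemmad11}, with $\varepsilon_7=\varepsilon_{p_1}$, and consider a putative square $\chi^2=\ell\,\varepsilon_{p_1}^{a}\sqrt{\varepsilon_{p_1p_2}}^{\,b}\cdots\sqrt[4]{\eta^2\varepsilon_{p_1}^3\varepsilon_{p_2}\varepsilon_{p_3}\varepsilon_{p_1p_2p_3}}^{\,g}$ with exponents in $\{0,1\}$. The decisive remark is that the three norm maps $N_{\KK^+/k_2}=1+\tau_2$, $N_{\KK^+/k_1}=1+\tau_3$ and $N_{\KK^+/k_4}=1+\tau_1$ (with $k_4=\QQ(\sqrt{p_2},\sqrt{p_3})$) act on this relation exactly as in the proof of the second item of Lemma~\ref{lemmad11}; the rational factor $\ell$ contributes only the perfect square $\ell^2$ under each norm and so leaves all the vanishing arguments for $b,c,d,e,f,g$ intact. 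This collapses the relation to $\chi^2=\ell\,\varepsilon_{p_1}^{a}$, so a unit $\varepsilon$ of $\KK^+$ with $\ell\varepsilon$ a square in $\KK^+$ can only be $\varepsilon\in\{1,\varepsilon_{p_1}\}$ modulo squares.

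It then remains to settle when $\ell\,\varepsilon_{p_1}^{a}$ is a square in $\KK^+$. Here I would use Lemma~\ref{BenjSnyLemma}-(3), which gives $\sqrt{\varepsilon_{p_1}}=\frac12(a\sqrt{2p_1}+b\sqrt2)$ and hence $2\varepsilon_{p_1}=(a\sqrt{p_1}+b)^2$, showing that $\varepsilon_{p_1}$ lies in the same square class as $2$ in $\KK^{+\times}/(\KK^{+\times})^2$. Consequently $\ell$ is a square in $\KK^+$ only for $\ell=1$ (excluded), while $\ell\varepsilon_{p_1}$ is a square precisely when $2\ell$ becomes trivial modulo squares, i.e.\ when $\sqrt{2\ell}\in\KK^+$; for $\ell$ coprime to the $p_i$ this forces $\ell=2$. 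Thus $\varepsilon$ exists exactly when $\ell=2$, in which case $\varepsilon=\varepsilon_{p_1}$ and Lemma~\ref{Lemmeazizi2}(1) replaces $\varepsilon_{p_1}$ by $\sqrt{-\varepsilon_{p_1}}$ while keeping the six remaining generators; for $\ell\neq2$ Lemma~\ref{Lemmeazizi2}(2) keeps $\varepsilon_{p_1}$ itself, which yields the stated fundamental system in both cases.

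The main obstacle I anticipate is the clean disposal of the non-square case, namely being certain that no product $\ell\,\varepsilon_{p_1}^{a}$ sneaks through as a square when $\ell\neq2$; this rests entirely on the identification of the square class of $\varepsilon_{p_1}$ with that of $2$ via Lemma~\ref{BenjSnyLemma}-(3), together with the explicit list of square-free rational integers that become squares in the multiquadratic field $\KK^+$. A secondary point requiring care is the interface with Lemma~\ref{lemmad11} itself: when $\ell$ is one of the products of the $p_i$ one has $\KK_\ell=\KK=\KK^+(i)$ with $i\in\KK_\ell$, so such values must be excluded (or handled through Lemma~\ref{Lemme azizi}) for Lemma~\ref{Lemmeazizi2} to be applicable.
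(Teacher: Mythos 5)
Your proposal is correct and follows essentially the same route as the paper: the paper derives this corollary directly from the second item of the proof of Lemma \ref{lemmad11}, where the same norm-map reduction collapses the relation to $\chi^2=\ell\varepsilon_{p_1}^a$, Lemma \ref{BenjSnyLemma}-(3) shows solvability occurs exactly for $\ell=2$, and Lemma \ref{Lemmeazizi2} then produces the stated fundamental system. Your closing caveat about needing $\ell$ coprime to the $p_i$ (so that $i\notin\KK_\ell$ and Lemma \ref{Lemmeazizi2} applies) is a fair observation, since the paper imposes that coprimality in the proof of Lemma \ref{lemmad11} but omits it from the corollary's statement.
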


	\section{\bf Appendix}

	\begin{lemma}[Snyder]\label{snyderlemma}
		$h_2(\K_j) = 1$ in the Kuroda class number formula for $\L_j/\k^1$ for $j = 1$, $2$, $0$.
	\end{lemma}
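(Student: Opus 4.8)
The plan is to sidestep the (admittedly unknown) unit groups $E_{\K_j}$ entirely and extract $h_2(\K_j)$ from Chevalley's ambiguous class number formula for the quadratic extension $\K_j/\k^1$, exploiting that $h_2(\k^1)=1$. Put $G_j=\Gal(\K_j/\k^1)=\langle\sigma_j\rangle$. On $2$-parts the formula reads
$$|\mathbf{C}l_2(\K_j)^{G_j}|=\frac{h_2(\k^1)\,\prod_v e(v)}{2\,(E_{\k^1}:E_{\k^1}\cap N_{\K_j/\k^1}\K_j^\times)},$$
the product running over all places $v$ of $\k^1$ with ramification index $e(v)$ in $\K_j$. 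Since $h_2(\k^1)=1$, if the right-hand side is $1$ then $\mathbf{C}l_2(\K_j)^{G_j}=1$; as $G_j$ is a $2$-group acting on the finite $2$-group $\mathbf{C}l_2(\K_j)$, a nontrivial such group always has nontrivial fixed points, so $\mathbf{C}l_2(\K_j)=1$ and $h_2(\K_j)=1$. Everything thus reduces to the ramification data and the unit-norm index.

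I would first fix the ramification. Because $\k^1\subseteq\K_j\subseteq\L_j$, $\L_j/\k^1_+$ is unramified, and $\k^1_+/\k^1$ (being the narrow Hilbert $2$-class field) is unramified at all finite primes, the extension $\K_j/\k^1$ is unramified at every finite prime. At the infinite places I use that $u_1=a_1+c_1\sqrt{d_3d_4}$ lies in $\k^1$ with $u_1u_1'=a_1^2-c_1^2 d_3d_4=b_1^2 d_1<0$: the two conjugates $a_1\pm c_1\sqrt{d_3d_4}$ have opposite signs, and the quadratic character cutting out $\sqrt{d_3d_4}$ is nontrivial on $\Gal(\k^1/\QQ)$, so $u_1$ is negative at exactly $4$ of the $8$ real places; hence $\K_1/\k^1$ ramifies at exactly these $4$ infinite places. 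The same count holds for $\K_2$ (via $u_2u_2'=b_2^2 d_2<0$ and the character of $\sqrt{d_1d_4}$) and for $\K_0$ (the sign of $u_1u_2$ is governed by the character of $\sqrt{d_1d_3}$, again nontrivial, hence $-1$ at $4$ places). Therefore $\prod_v e(v)=2^4$ in all three cases, the formula collapses to
$$|\mathbf{C}l_2(\K_j)^{G_j}|=\frac{2^3}{(E_{\k^1}:E_{\k^1}\cap N_{\K_j/\k^1}\K_j^\times)},$$
so the index is automatically at most $2^3$, and $h_2(\K_j)=1$ is equivalent to the index being exactly $2^3$.

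To compute the norm index I invoke the Hasse norm theorem: a unit $\varepsilon\in E_{\k^1}$ is a global norm from $\K_j$ iff it is a local norm everywhere. At every finite prime (all unramified) and at the $4$ split infinite places a unit is automatically a local norm, while at each of the $4$ ramified (complexified) infinite places the local norm group is $\RR_{>0}$, so the sole condition is positivity there. Hence $(E_{\k^1}:E_{\k^1}\cap N_{\K_j/\k^1}\K_j^\times)$ is the order of the image of the sign map $E_{\k^1}\to\{\pm1\}^4$ at the four ramified places, and it suffices to produce three units of $\k^1$ whose sign vectors are $\FF_2$-independent in $\{\pm1\}^4$; the automatic bound then forces the index to equal $2^3$. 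This is where the explicit fundamental systems of $\k^1$ from Section \ref{sec4} (Lemmas \ref{lemma1}, \ref{lemma2}, \ref{lemma23}, \ref{lemmad11}) are used, unit by unit, in each of the cases $(c_3)$ and $(d_1)$.

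The main obstacle is precisely this archimedean sign computation. It is delicate because the signs of the nested radical generators $\sqrt{\varepsilon_{p_ip_j}\varepsilon_{p_kp_4}}$ and $\sqrt[4]{\eta^2\varepsilon_{p_1p_4}^3\varepsilon_{p_2p_4}\varepsilon_{p_3p_4}\varepsilon_{\k}}$ are not legible from their algebraic form, so one must identify the four ramified embeddings consistently (those on which $\sqrt{d_3d_4}$, resp.\ $\sqrt{d_1d_4}$, $\sqrt{d_1d_3}$, is negative) and then pin down the relevant signs using the quadratic identities of Lemma \ref{BenjSnyLemma} (the relations $\gamma_{ij}=p_ia^2-p_jb^2$, etc.) together with the Kronecker-symbol hypotheses defining the types. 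Once three independent sign vectors are exhibited in every case, the index is $2^3$, the ambiguous class number is $1$, and $h_2(\K_j)=1$ for $j=1,2,0$.
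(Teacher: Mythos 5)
Your overall strategy is the same as the paper's: apply the ambiguous class number formula to the quadratic extension $\K_j/\k^1$ with $h_2(\k^1)=1$, observe that the extension is unramified at all finite primes so that only the infinite places contribute, count exactly $4$ ramified real places from the sign pattern of $u_j$ (giving the numerator $2^{s-1}=2^3$), and reduce everything to showing that the unit--norm index $(E_{\k^1}:E_{\k^1}\cap N_{\K_j/\k^1}\K_j^\times)$ equals $2^3$, which by the Hasse norm theorem is the order of the image of $E_{\k^1}$ under the sign map at the four ramified embeddings. All of this matches the paper's sketch step for step (the paper phrases the conclusion as $\rank(\mathbf{C}l_2(\K_j))=0$ rather than via fixed points of the $2$-group action, but these are equivalent), and your bookkeeping of $S_1$, $S_2$, $S_0$ via the characters cutting out $\sqrt{d_3d_4}$, $\sqrt{d_1d_4}$, $\sqrt{d_1d_3}$ is the same device the paper uses.

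The gap is that you stop exactly where the lemma's actual content begins. You correctly identify that one must exhibit three units of $\k^1$ with $\FF_2$-independent sign vectors at the four ramified places (equivalently, that $H_j=\{\vep\in E_{\k^1}: f(\vep)>0 \text{ for } f\in S_j\}$ has index exactly $2^3$ rather than merely at most $2^3$), and you correctly point to the fundamental systems of Section \ref{sec4} as the needed input --- but you then declare this "the main obstacle" and do not carry it out. The paper's proof, sketch though it is, does execute this step: it writes $E_{\k^1}$ in terms of the generators $u_1,\dots,u_6,\vep_\k$ and the extra unit $u_0$ found in Section \ref{sec4}, shows that $H_1$ contains the five independent classes $-u_2,u_3,u_5,u_6,\vep_\k$ modulo squares, and then rules out a sixth independent element of the form $\eta u_0$ by analyzing $u_0^2$ inside the totally positive subgroup $E^+=\langle u_3,u_5,u_6,\vep_\k\rangle E^2$; this forces $(E_{\k^1}:H_1)=2^8/2^5=2^3$. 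Without some version of that argument (or an explicit exhibition of three independent sign vectors in each of the cases $(c_3)$ and $(d_1)$), the index could a priori be $2^2$ or smaller, the ambiguous class number would then be nontrivial, and the conclusion $h_2(\K_j)=1$ would not follow. So the proposal is a correct reduction along the paper's own lines, but it is not yet a proof.
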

	\begin{proof}$($sketch$)$
		We apply the Ambiguous Class Number Formula to the extension $\K_1/\k^1$, observing
		again that $h_2(\k^1) = 1$, obtaining $$2^{\rank(\mathbf{C}l_2(\K_1))}= \mathrm{Am}_2(\K_1/\k^1) = \frac{ 2^{s-1}}{(E_{\k^1} : E_{\k^1}\cap N_{\K_1/\k^1}(\K_1^\times) )} ,$$ 
		where $\k^1 = \mathbb Q(\sqrt{d_1d_4}, \sqrt{d_2d_4}, \sqrt{d_3d_4})$, $s$ is the number of places of $\k^1$ (finite and infinite) that ramify in $\k^1$ and $H_1=(E_{\k^1} : E_{\k^1}\cap N_{\K_1/\k^1}(\K_1^\times) )$,
		where $N_{\K_1/\k^1}(\K_1^\times)$ is the norm 
		from $\K_1$ to $\k^1$ of the non-zero elements in $\K_1$ (cf. \cite{BenLemSnyderJNT1998}).
		We note that $\K_1 = \k^1(\sqrt{u_1})$, $\K_2 = \k^1(\sqrt{u_2})$,  and $ \K_0 = \k^1(\sqrt{u_1u_2})$.
		Since $\K_1/\k^1$ is finitely unramified we only need to consider the infinite places of $\k^1$ that ramify in $\K_1$.
		Let $\Gal(\k^1/\mathbb Q) = \langle f_1, f_2, f_3\rangle$ where
		$f_i(\sqrt{d_jd_4}) = - \sqrt{d_jd_4}$ if $j = i$, and $\sqrt{d_jd_4}$ if $j\not=i$.
		Since $\mathbb Q(v_1) = \mathbb Q(\sqrt{d_3d_4})$ we see that $f(v_1) < 0$ if and 
		only if $f$ is in $S_1 = \{f_1, f_1f_3, f_2f_3, f_1f_2f_3\}$. Thus $s = 4$ and therefore $2^{s-1} = 2^3$. 
		We now consider $H_1$. Since $s=4$ and   $|E_{\k^1}/E_{\k^1}^2|      = 2^8$, we know that $|H_1| \geq 2^5$. By local-global properties we know that
		$H_1= \{\varepsilon \in E_{\k^1} \text{ such that } f(\varepsilon) > 0, \text{ for } f \in  S_1\}$ (cf. \cite{Neukirch}). In a completely similar way, we see that 
		$\mathrm{Am}_2(\K_2/\k^1) =\dfrac{ 2^3}{  ( E_{\k^1} : H_2)} $ with $H_2 = \{\varepsilon\in E_{\k^1} \text{ such that } f(\varepsilon) > 0,\text{ for } f \in S_2\}$
		where (since $\mathbb Q(v_2) = \QQ(\sqrt{d_1d_4})$) $S_2 = \{f_1, f_1f_2, f_1f_3, f_1f_2f_3\}$. Moreover, 
		$\mathrm{Am}_2(\K_0/\k^1)=\frac{2^3}{( E_{\k^1} : H_0)}$ with $H_0=\{\varepsilon\in E_{\k^1} \text{ such that } f(\varepsilon) > 0,\text{ for } f \in S_0\}$ where
		$S_0=\{f_1,f_3,f_1f_2,f_2f_3\}$
		We now need to use our results in Section  \ref{sec4} for $E_{\k^1}$ in order to determine the signs of the conjugates of the units and where we know that $q(\k^1)=2^7$.
		We	follow the procedure described in \cite{BenSnyder25PartII} which showed that $h_2(K) \geq 2$ for all the $(c_i)$ and $(d_i)$ cases except for $(c_3)$ and $(d_1)$.
		We denote $E$ the unit subgroup pf $E_{\k^1}$ generated by all the units in the quadratic subfields of $\k^1$, i.e.,
		$E =    \langle  -1, \vep_{d_1d_2}, \vep_{d_1d_3}, \vep_{d_1d_4}, \vep_{d_2d_3}, \vep_{d_2d_4}, \vep_{d_3d_4}, \vep_{\k}\rangle$.
		Letting $u_1 = \sqrt{\vep_{d_1d_2}\vep_{d_3d_4}}$, $u_2 = \sqrt{\vep_{d_1d_3}\vep_{d_3d_4}}$, 
		$u_3 = \sqrt{\vep_{d_1d_4}\vep_{d_3d_4}}$, $u_4 = \sqrt{\vep_{d_2d_3}\vep_{d_3d_4}}$, $u_5 = \sqrt{\vep_{d_2d_4}\vep_{d_3d_4}}$, $u_6 = \sqrt{\vep_{d_3d_4}\vep_\k}$, we see that $ u_1$, $u_2$, $u_3$, $u_4$, $u_5$, $u_6$ is a list of six independent units in $E$ such that $(E_{\k^1} : \langle -1, u_1, u_2,u_3, u_4, u_5, u_6\rangle )= 2$ and therefore $E_{\k_1}$ contains another unit $u_0$, which we have found explicitly in 
		Section  \ref{sec4} for our group 150 fields.
		Similarly to the procedure described in \cite{BenSnyder25PartII}, it can be shown that $H_1$ contains
		$\langle- u_2,u_3, u_5, u_6, e_k\rangle E^2$, $H_2$ contains  $\langle- u_1,u_3, u_5, u_6, e_\k\rangle E^2$, and $H_0$ contains $\langle - u_1u_2,u_3, u_5, u_6, \vep_{\k}\rangle E_2$ for  	$E = \langle -1,u_1, u_2,u_3, u_4, u_5, u_6, \vep_\k\rangle$.
		Suppose for example that $H_1$ has another generator.
		Then,  it must have  
		the form $\eta u_0$ for some $\eta $ in $E$. But since $u_0^2$ is in $E$ and totally positive in $\k_1$, it can be seen that $u_0^2$ is in 
		$E^+ = \langle u_3, u_5, u_6, \vep_\k\rangle E^2$. Thus $u_0^2 = u_3^a u_5^b u_6^c \vep_\k^d g^2$ for some integers $a$, $b$, $c$, $d$, and for $g$ in $E$. 
		Since $u_0$ is not in $E$, then at least one of the exponents $a$, $b$, $c$, $d$ is odd. Suppose for instance that $a$ is odd; then 
		$u_3 = u_5^xu_6^y\varepsilon_{\k}^zu_0^{2p}m^2$, for some integers $x$, $y$, $z$, $p$, and for $m$ in $E$. 
		It is also easily seen that $H_1 = \langle-u_2, u_5, u_6, \varepsilon_{\k}, \eta u_ 0\rangle E^2$ since there cannot be a second independent element in $H_1$ of the form $\eta'u_0$. Hence $(E_{\k^1} : H_1) = 2^8/2^5 = 2^3$ and therefore $\rank(\mathbf{C}l_2(\K_i)) = 0$ and consequently $h_2(\K_1) = 1$. The groups $H_2$ and $H_0$ are handled in a similar way and therefore we can conclude that $h_2(\K_j) = 1$ for $j = 1$, $2$, $0$.
		
	\end{proof}

	\section*{\bf Acknowledgment}
	We would like to thank Chip Snyder for the significant contribution that he made to this article through his proof of Lemma \ref{snyderlemma}, and for a number of relevant communications.




\begin{thebibliography}{11}
		
		\bibitem{azizunints99}  A. Azizi,  {\it Unités de certains corps de nombres imaginaires et abéliens sur $\QQ$,} Ann. Sci. Math. Québec, 23 (1999), 15-21.
		
		\bibitem{Az-00} A. Azizi, {Sur la capitulation des $2$-classes d'id\'eaux de	$\kk =\QQ(\sqrt{2pq}, i)$,  o\`u $p\equiv-q\equiv1\pmod4$.} Acta Arith. {94}  (2000),  383--399
		
		
		
		
		
		
		
		
		
		
		\bibitem{BenLemSnyderJNT1998} E. Benjamin,   F. Lemmermeyer and  C. Snyder, Real Quadratic Fields with Abelian 2-Class Field Tower, J.   Number Theory,   73 (1998),   182-194
		
		
		
		
		\bibitem{BenjaminSnyderActaArithmetica2020} E. Benjamin   and  C. Snyder,  The Narrow 2-Class Field Tower of Some Real Quadratic Number Fields, Acta Arithmetica, (2020) 194.2, 187-218
		
		
		
		\bibitem{BenjaminSnyderPrerint2026ranks}	E. Benjamin, C. Snyder, On the Narrow 2-Class Field Tower of Some Real Quadratic Number Fields, Part I: Ranks. Preprint (2024), https://arxiv.org/abs/2504.20785v1 
		
		
		
		
		\bibitem{BenSnyder25PartII} E. Benjamin,   and  C. Snyder,    On the Narrow 2-Class Field Tower of Some Real Quadratic Number Fields: Part II: Lengths. (2025) arXiv:2504.20787v1 [math.NT] 		 
		
		
		
		
		
		
		
		\bibitem{connor88}	 P. E. Conner   and  J. Hurrelbrink,   {Class number parity, }{  Ser. Pure. Math. $8$,  World Scientific,  1988. }
		
		\bibitem{ChemsEddin2022} M. M. Chems-Eddin, Unit groups of some multiquadratic number fields and $2$-class groups, Period. Math. Hung., 84 (2022)  235–249.
		
		
		\bibitem{ElHamam} M. B. T. El Hamam,   Erratum: Fundamental systems of units of some multiquadratic fields of degree 8 and degree 16, Ricerche Mat (2025). https://doi.org/10.1007/s11587-025-01039-8
		
		
		
		\bibitem{HallSenior}	M. Hall and  J.K. Senior,    The Groups of Order $2^n$ $(n \leq 6)$.  Macmillan,  1967.
		
		
		
		\bibitem{LemmermeyerActa Arith.1994} F. Lemmermeyer,    Kuroda’s Class Number Formula. Acta Arith., (1994) 66, 245-260
		
		
		\bibitem{Lemmermeyer1995} F. Lemmermeyer,    Die Konstruktion von Klassenkorpern. Diss. Univ. Heidelberg,  1995 
		
		
		
		
		
		
		
		
		
		
		
		
		
		
		
		
		
		
		\bibitem{He-22} G. Herglotz,  {\"Uber einen Dirichletschen Satz,}{ Math. Z.,   12    (1922),  255--261.}
		
		
		
		
		
		
		
		
		
		
		
		
		
		
		\bibitem{Ku-50} S. Kuroda,  {\"Uber die Klassenzahlen algebraischer Zahlk\"orper,}{ Nagoya Math. J., 1 (1950),  1--10.}
		
		
		\bibitem{Neukirch} J. Neukirch,   Algebraic Number Theory. Springer  (1965).
		
		
		\bibitem{wada}  H. Wada,   {On the class number and the unit group of certain algebraic number fields.}{ J. Fac. Sci. Univ. Tokyo,  13 (1966),   201--209.}
		
		
		
		
		
		
		
		
		
		
		
		
		
		
		
		
		
		
		
		
		
		
	\end{thebibliography}
\end{document}